\numberwithin{equation}{section}
\newtheorem{theorem}{Theorem}[section] 
\newtheorem{lemma}[theorem]{Lemma}     
\newtheorem{corollary}[theorem]{Corollary}
\newtheorem{proposition}[theorem]{Proposition}
\theoremstyle{definition}
\newtheorem{definition}[theorem]{Definition}
\newcommand {\C}{\mathbb C}
\newcommand {\N}{\mathbb N}
\newcommand {\Z}{\mathbb Z}
\newcommand{\rad}{{\rm rad\,}}
\newcommand{\supp}{{\rm supp\,}}
\newcommand{\spn}{{\rm span \,}}
\newcommand{\im}{\operatorname{im}}
\newcommand{\approxlhd}{\mathrel{\raisebox{0.7 pt}{\topinset{$\hstretch{0.9}{\vstretch{0.5}{\sim}}$}{$\lhd$}{6.8 pt}{}}}}
\newcommand{\rank}{\operatorname{rank}}
\newcommand{\Asn}{\mathcal{A}_{sn}}
\newcommand{\Jsn}{\mathcal{J}_{sn}}
\newcommand{\D}{\mathcal{D}}
\title[algebras associated with invariant means]{Algebras associated with invariant means on the subnormal subgroups of an amenable group}
\author[J.\ T.\  White]{Jared T.\ White}
\address{
	Jared T. White, School of Mathematics and Statistics, The Open University, Walton Hall,
	Milton Keynes, MK7 6AA, United Kingdom.}
\email{jw65537@gmail.com; jared.white@open.ac.uk.}
\keywords{Amenable group, invariant mean, just infinite group, branch group, Banach algebra, Arens product, Jacobson radical}
\date{2020}
\subjclass[2010]{43A07 (primary); 43A20,  20E08 (secondary)}
\begin{document}

\begin{abstract}
	Let $G$ be an amenable group. We define and study an algebra $\Asn(G)$, which is related to invariant means on the subnormal subgroups of $G$. For a just infinite amenable group $G$, we show that $\Asn(G)$ is nilpotent if and only if $G$  is not a branch group, and in the case that it is nilpotent we determine the index of nilpotence. We next study $\rad \ell^1(G)^{**}$ for an amenable branch group $G$, and show that it always contains nilpotent left ideals of arbitrarily large index, as well as non-nilpotent elements. This provides infinitely many finitely-generated counterexamples to a question of Dales and Lau \cite{DL}, first resolved by the author in \cite{W2}, which asks whether we always have $(\rad \ell^1(G)^{**})^{\Box 2} = \{ 0 \}$. We further study this question by showing that $(\rad \ell^1(G)^{**})^{\Box 2} = \{ 0 \}$ imposes certain structural constraints on the group $G$.
\end{abstract}

\maketitle

\noindent
\section{Introduction}
\noindent
Let $G$ be an amenable group.
In this article we shall introduce and study a certain algebra $\Asn(G)$, which is defined in terms of objects closely related to invariant means on the subnormal subgroups of $G$, and investigate how algebraic properties of this algebra reflect the subnormal structure of $G$. We shall also study the Jacobson radical of $\ell^1(G)^{**}$. What unites our investigations of these two objects is the importance of nilpotent elements, which we shall construct in essentially the same way in both cases.

The multiplication for our algebra $\Asn(G)$ will be given by the first Arens product coming from $\ell^1(G)^{**}$. This product is a well-known construction in functional analysis and semigroup theory, which in our case can be thought of as extending convolution on $\ell^1(G)$ to the space of finitely additive measures. Since invariant means on subgroups of $G$ may be regarded as elements of $\ell^1(G)^{**}$, one could consider the subalgebra of $\ell^1(G)^{**}$ generated by invariant means on the subnormal subgroups. However, this is not the path we take. Instead we define another object which is similar to an invariant mean but which is better suited to algebra. Specifically, we define an \textit{invariant difference} on a group $G$ to be a left-translation-invariant bounded linear functional $\Gamma$ on $\ell^\infty(G)$ which satisfies $\langle \Gamma, 1 \rangle = 0$.  As we shall point out below, it is known that an infinite group is amenable if and only if it has a non-zero invariant difference. Invariant differences are better suited to our purposes than invariant means because clearly linear combinations of invariant differences are invariant differences, a property that of course fails for invariant means. In fact more is true: given an invariant difference $\Gamma \in \ell^1(G)^{**}$ the left ideal generated by $\Gamma$ consists of invariant differences. We define our algebra $\Asn(G)$ to be the subalgebra of $\ell^1(G)^{**}$ generated by the invariant differences on the subnormal subgroups of $G$. Invariant differences have been studied before in a variety of contexts, but the terminology is ours.

Our initial motivation for studying $\Asn(G)$ was to study $\rad \ell^1(G)^{**}$, and we subsequently discovered the striking way in which $\Asn(G)$ reflects the subnormal structure of $G$, which we believe makes it of independent interest. Moreover, Duchesne, Tucker-Drob, and Wesolek \cite{DTW} have recently used the first Arens product (which they call convolution) with conjugation-invariant means in a proof which characterizes inner amenability of HNN-extensions and graph products of groups. This suggests to us that a better understanding of the algebraic properties of objects related to invariant means could have applications to group theory further down the line. 

The first of our three main results concerns a just infinite amenable group $G$. For this class of groups we are able to make use of the structural results of Wilson \cite{Wi71} and Grigorchuk \cite{Gr00} to study $\Asn(G)$. The definitions of a branch group and of the structure lattice $\mathcal{L}(G)$ will be given in Section 2.

\begin{theorem}		\label{00.1}
	Let $G$ be an amenable just infinite group. Then $\Asn(G)$ is nilpotent if and only if $G$ is not a branch group. Moreover
	\begin{enumerate}
		\item[\rm (i)] if $G$ is finitely-generated and virtually abelian then the index of nilpotence of $\Asn(G)$ is given by $\rank G + 1$;
		\item[\rm (ii)] otherwise the index of nilpotence of $\Asn(G)$ is given by $\log_2 \vert \mathcal{L}(G) \vert + 1$ (which is infinite in the case of a branch group).
	\end{enumerate}
\end{theorem}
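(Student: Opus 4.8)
The strategy divides naturally along the dichotomy in the structure theory of just infinite groups (Wilson–Grigorchuk): a just infinite group is either (a) of branch type, or (b) virtually a direct power of a hereditarily just infinite group, with the abelian case of (b) being precisely the finitely-generated virtually abelian just infinite groups. I would first establish the general mechanism linking $\Asn(G)$ to the structure lattice $\mathcal L(G)$, and then treat the three regimes—branch, non-abelian non-branch, and virtually abelian—separately.

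**Key steps.**

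The plan is as follows.

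\textbf{Step 1: Products of invariant differences on "independent" subgroups.} The core algebraic fact to isolate is: if $\Gamma_1$ is an invariant difference on a subnormal subgroup $H_1$ and $\Gamma_2$ an invariant difference on a subnormal subgroup $H_2$, then the Arens product $\Gamma_1 \Box \Gamma_2$ can be analysed by "integrating" $\Gamma_2$ first against translates of a function and then applying $\Gamma_1$. When $H_1$ and $H_2$ centralize each other (or more generally when one normalizes the other suitably), left-invariance of $\Gamma_2$ under $H_2$ and the translation action let one show $\Gamma_1 \Box \Gamma_2$ is again an invariant difference on $\langle H_1, H_2\rangle$, but that a \emph{further} product with a third independent $\Gamma_3$ vanishes once the subgroups "fill up" enough of $G$—because a function on $G$ that is invariant under a subnormal subgroup of finite index, up to the $\langle\Gamma,1\rangle = 0$ condition, gets killed. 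This is the construction of nilpotent elements alluded to in the introduction, and the number of factors needed before forced vanishing is governed by the length of chains in $\mathcal L(G)$.

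\textbf{Step 2: The branch case — non-nilpotence.} If $G$ is a branch group, $\mathcal L(G)$ is infinite and, crucially, contains arbitrarily long finite chains of direct-product-type sublattices (the rigid stabilizers at successive levels). For each $n$ I would pick $n$ pairwise-commuting nontrivial rigid stabilizers $R_1,\dots,R_n$ at a fixed level (the branching action gives these), choose nonzero invariant differences $\Gamma_i$ on each (they exist since $G$, hence each $R_i$, is amenable and infinite), and show by the Step 1 mechanism that $\Gamma_1 \Box \cdots \Box \Gamma_n \neq 0$. This proves $\Asn(G)$ has nonzero products of every length, so it is not nilpotent, and (ii) holds vacuously with infinite index.

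\textbf{Step 3: The non-branch case — nilpotence and the exact index.} Here $G$ has a normal subgroup $N$ of finite index with $N \cong K^m$, $K$ hereditarily just infinite, and $\mathcal L(G)$ is a finite Boolean-type lattice with $|\mathcal L(G)| = 2^m$ (in the non-abelian case) or one computes it from $\rank G$ in the abelian case. The subnormal subgroups of $G$ are controlled: any subnormal subgroup either has finite index or is contained, up to finite index, in one of the $m$ coordinate factors. An invariant difference on a finite-index subnormal subgroup is, after averaging over cosets, essentially an invariant difference on $G$ itself, and I would show these are "absorbing" — one factor of that type, combined with enough coordinate-factor differences, forces the product to zero. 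The upper bound $\log_2|\mathcal L(G)| + 1 = m+1$ comes from: you can multiply at most one difference per coordinate (they commute, giving $m$ factors) and at most one global/finite-index difference, but a second global one annihilates; conversely the matching lower bound reuses Step 1 to exhibit a nonzero product of length $m+1$ (or $\rank G + 1$). The abelian subcase is where $\mathbb Z^d$-by-finite structure enters and $\rank G$ replaces $\log_2|\mathcal L(G)|$, because the relevant subnormal subgroups are finite-index subgroups of $\mathbb Z^d$ and one tracks a flag $\{0\} \subsetneq \mathbb Z \subsetneq \cdots \subsetneq \mathbb Z^d$ of them.

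**Main obstacle.**

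The hard part will be Step 1 together with the lower-bound half of Step 3: proving that the specific length-$(m+1)$ iterated Arens product is genuinely nonzero. The Arens product is not commutative and not weak\*-continuous in the first variable, so one must be careful about the order of the factors and about which functionals the product is being tested against. Concretely, one needs to produce a single function $f \in \ell^\infty(G)$ — built out of indicator functions of cosets of the coordinate subgroups and their translates — on which $\langle \Gamma_1 \Box \cdots \Box \Gamma_{m+1}, f\rangle \neq 0$, and simultaneously argue that \emph{no} longer product survives against \emph{any} test function. The vanishing direction should follow from a clean lemma: an invariant difference on a subnormal subgroup $H$ annihilates every function that is constant on left cosets of a subnormal subgroup $L$ with $L \supseteq H$ of finite index in $G$, since such a function pulls back from the finite group $G/L$ and $\Gamma$ is a difference. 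Making the bookkeeping match $|\mathcal L(G)|$ exactly, rather than up to an additive constant, is the delicate point, and I expect it requires the precise description of $\mathcal L(G)$ from \cite{Wi71, Gr00} rather than just its cardinality.
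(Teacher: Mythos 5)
Your overall skeleton does match the paper's: split along the Wilson--Grigorchuk trichotomy, get non-nilpotence in the branch case from nonzero products of invariant differences supported on the distinct direct factors $L_i^{(j)}$, and get the upper bounds from a vanishing lemma of the form ``an invariant difference on $H$ annihilates anything left-invariant under a subgroup $K$ with $[H:H\cap K]<\infty$'' (the paper's Lemma \ref{2.1}; note the finite index is taken in $H$, not in $G$ as in your ``clean lemma''). However, your Step 3 accounting is internally inconsistent and contradicts the statement being proved. If the index of nilpotence is $m+1$, the longest nonzero products have length $m$, not $m+1$; and your proposed length-$(m+1)$ nonzero product --- one invariant difference per coordinate factor plus one ``global'' (finite-index) invariant difference --- is in fact zero. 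Indeed, the product of the $m$ coordinate differences is left-invariant under the finite-index subgroup $H_1\times\cdots\times H_m$, so a difference on a finite-index subnormal subgroup kills it on the left by Lemma \ref{2.1}, and in the reverse order each coordinate difference kills a functional that is left-invariant under a finite-index subgroup. So both the ``conversely'' in your upper-bound paragraph and the target $\langle\Gamma_1\Box\cdots\Box\Gamma_{m+1},f\rangle\neq 0$ in your obstacle paragraph are aimed at something false; the correct lower bound is a nonzero product of length exactly $m$ (resp.\ $\rank G$), which the paper gets from the tensor-type test function of Lemma \ref{0.4} --- your Step 1 (products of differences on commuting subgroups are again invariant differences) does not by itself rule out that such a product is $0$.

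The second, more serious gap is your structural claim that every subnormal subgroup ``either has finite index or is contained, up to finite index, in one of the $m$ coordinate factors.'' This is false as soon as $m\geq 3$: by Wilson's description, a subnormal subgroup is commensurable with $H_{i_1}\times\cdots\times H_{i_r}$ for an \emph{arbitrary} subset of coordinates (e.g.\ $H_1\times H_2$), and in the virtually abelian case subnormal subgroups of every intermediate rank occur, not just finite-index subgroups of $\Z^d$. This is precisely where the real work in the upper bound lies: one must show that \emph{any} product of $m+1$ invariant differences on arbitrary subnormal subgroups vanishes, including differences whose supports overlap in some coordinates and miss others. The paper handles this with coset decompositions that preserve the difference property (Lemma \ref{0.1a}, Corollary \ref{0.1b}), an induction that tracks which coordinates have been ``consumed'' by the partial products and shows the partial product is left-invariant on a finite-index subgroup of the product of those coordinates (Lemmas \ref{3.1}, \ref{3.2}), and a separate induction on rank for $\Z^n$ (Lemma \ref{3.5a}). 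Your proposal contains no mechanism for this mixed-support case, so the upper bound --- the delicate half of the theorem --- is not actually reached.
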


 We shall also prove stronger versions of Theorem \ref{00.1} with $\Asn(G)$ replaced by $\Jsn(G)$, the left ideal that it generates in $\ell^1(G)^{**}$, in the case that $G$ is (not necessarily just infinite and) finitely-generated and virtually abelian (Theorem \ref{3.5b}), or hereditarily just infinite (Theorem \ref{2.3}).

 The connection between $\Asn(G)$ and $\rad \ell^1(G)^{**}$ is in part historical. The only known method of constructing elements of $\rad \ell^1(G)^{**}$ is by manipulating invariant differences on subgroups. As such, many published theorems about $\rad \ell^1(G)^{**}$ are in a sense theorems about $\Asn(G)$ or $\Jsn(G)$.
 
 Later on in our article we shall study $\rad \ell^1(G)^{**}$ for an amenable branch group $G$. Our main result is the following.
 
 \begin{theorem}		\label{00.2}
 	Let $G$ be an amenable branch group and let $A$ be either $\ell^1(G)^{**}$ or $\Jsn(G)$. Then $A$ contains nilpotent left ideals of arbitrarily large index. Hence $\ell^1(G)^{**}$ contains non-nilpotent radical elements.
 \end{theorem}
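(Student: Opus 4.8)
The plan is to exhibit, for arbitrarily large $n$, a nilpotent left ideal of $\ell^1(G)^{**}$ of correspondingly large nilpotence index that is contained in $\Jsn(G)$ (so that it witnesses the statement for both choices of $A$), and then to deduce the non-nilpotent radical element by a Baire category argument. First I would fix the subgroup data coming from the branch hypothesis: realising $G \leq \operatorname{Aut}(T)$ for a rooted tree $T$, the rigid level stabiliser $N_n := \operatorname{ristr}(n) = \prod_{|v| = n}\operatorname{rist}(v)$ is normal of finite index in $G$, hence infinite since $G$ is infinite and acts faithfully, and the rigid vertex stabilisers $\operatorname{rist}(v)$ with $|v| = n$ are pairwise commuting, pairwise conjugate in $G$, normal in $N_n$, and infinite; their number $k_n$ tends to infinity with $n$. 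Each $\operatorname{rist}(v)$ is subnormal in $G$ and, $G$ being amenable, is amenable and infinite, so it carries a non-zero invariant difference; these invariant differences lie in $\Asn(G) \subseteq \Jsn(G)$, and $\Jsn(G) \subseteq \rad\,\ell^1(G)^{**}$ by the preliminary results of the paper.

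The algebraic step is the construction already underlying Theorem \ref{00.1}. The basic observation is that, for an invariant difference $\Gamma$ supported on a subgroup $H \leq G$, left-$H$-invariance of $\Gamma$ forces the function $\Gamma \cdot f$ to be right-$H$-invariant for every $f \in \ell^\infty(G)$; this property survives further multiplication by invariant differences supported on subgroups commuting with $H$, and by arbitrary elements of $\ell^1(H')^{**}$ whenever $H \lhd H'$; and, being of mean zero, $\Gamma$ annihilates every right-$H$-invariant function. Applying this with $H = \operatorname{rist}(v)$ and $H' = N_n$, one finds that in any $\Box$-product of the chosen invariant differences, interspersed with multipliers from $\ell^1(N_n)^{**}$, a repetition of a vertex causes the whole product to vanish, while a product in which the vertices are pairwise distinct is non-zero. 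Hence the left ideal $I_n$ of $\ell^1(N_n)^{**}$ generated by the invariant differences on the $\operatorname{rist}(v)$ with $|v| = n$ is nilpotent of index $k_n + 1$; and since $I_n$ is invariant under conjugation by $G$ and $N_n$ is normal of finite index in $G$, the left ideal $J_n := \ell^1(G)^{**} \Box I_n$ of $\ell^1(G)^{**}$ has the same nilpotence index and satisfies $J_n \subseteq \ell^1(G)^{**} \Box \Asn(G) \subseteq \Jsn(G)$. As $k_n \to \infty$, this yields nilpotent left ideals of arbitrarily large index in both $\ell^1(G)^{**}$ and $\Jsn(G)$. I expect the non-vanishing of the distinct-vertex products — and, relatedly, the verification that $J_n$ really is a left ideal of the same index — to be the main obstacle, since it requires tracking how arbitrary multipliers act and exploiting the normality of the rigid level stabilisers together with the permutation action of $G$ on $T$.

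Finally, to deduce that $\ell^1(G)^{**}$ has non-nilpotent radical elements, I would set $R := \rad\,\ell^1(G)^{**}$, a closed subalgebra of $(\ell^1(G)^{**}, \Box)$, and suppose for a contradiction that every element of $R$ is nilpotent. Then $R = \bigcup_{n \geq 1} R_n$ where $R_n := \{x \in R : x^{\Box n} = 0\}$ is closed, so by the Baire category theorem some $R_{n_0}$ has non-empty interior in $R$. The standard linearisation argument — if $a + B_R(0, \delta) \subseteq R_{n_0}$ then for $y \in R$ the polynomial $\lambda \mapsto (a + \lambda y)^{\Box n_0}$ vanishes for all small $\lambda$, hence identically, so its leading term $y^{\Box n_0}$ is $0$ for small $y$ and, by homogeneity, for all $y \in R$ — shows that $R$ is nil of bounded index $n_0$. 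As $R$ is an algebra over $\C$, the Nagata--Higman theorem then gives $R^{\Box N} = \{0\}$ for some $N$; but taking $n$ with $k_n \geq N$ gives $J_n \subseteq R$ with $J_n^{\Box N} \neq \{0\}$, a contradiction. Therefore $R$ contains a non-nilpotent element, as required.
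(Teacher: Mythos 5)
Your proposal is correct and follows essentially the same route as the paper: invariant differences on the pairwise commuting factors at one level of the branch structure generate a left ideal whose index of nilpotence is exactly the number of factors plus one, with vanishing of products containing a repeated factor obtained from left-invariance together with the mean-zero condition (this is the content of Lemmas \ref{0.4b} and \ref{2.1}, which the paper packages as the square-zero ideals $U_{i,j}$ and the left ideals $I_{i,j}$ of Proposition \ref{5.3}), non-vanishing of distinct-factor products obtained from Lemma \ref{0.4}, and the passage from multipliers in $\ell^1(H_p)^{**}$ to multipliers in $\ell^1(G)^{**}$ handled by coset decomposition with respect to the finite-index normal subgroup and conjugation (Lemma \ref{2.2}, Lemma \ref{0.2}). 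Your final step, Baire category plus Nagata--Higman, is precisely the proof of the result the paper simply cites, namely Grabiner's theorem \cite{G69} that a nil Banach algebra is nilpotent, so this is a reproof rather than a different argument. One caveat: the paper's notion of branch group is the algebraic one of Definition \ref{00.3}, not the geometric one, so you should not begin by realising $G \leq \operatorname{Aut}(T)$ and taking rigid vertex and level stabilisers, since a group with a branch structure in the sense of Definition \ref{00.3} is not thereby supplied with such a faithful tree action; instead work directly with the given branch structure $(H_i, L_i, k_i)$, replacing $\operatorname{rist}(v)$ by the factors $L_i^{(j)}$ and the rigid level stabiliser by $H_i$ (which are normal in $G$ of finite index, with each $L_i^{(j)}$ normal in $H_i$ and the factors permuted transitively by conjugation), after which your argument goes through unchanged.
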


Let us give the background to this result. In \cite[Chapter 14, Question 3]{DL} Dales and Lau asked whether we always have  $(\rad\ell^1(G)^{**})^{\Box 2} = \{ 0 \}$ for every group $G$. This was answered in the negative by the author in \cite{W2}, where it was observed that $\Z^2$ is a counterexample. It was also shown in that paper that the group $G = \oplus_{\N} \Z$ has the stronger property that $\rad(\ell^1(G)^{**})$ is not even nilpotent. It seemed natural to ask whether there are finitely-generated groups with this property. Theorem \ref{00.2} confirms that there are, since many finitely-generated amenable branch groups are known, including the Grigorchuk group and the Gupta-Sidki $p$-groups. 

It remains an open question whether or not there exist any infinite groups $G$ satisfying $(\rad\ell^1(G)^{**})^{\Box 2} = \{ 0 \}$, and even whether this holds when $G=\Z$. Our final theorem puts some restrictions on the possible groups $G$ that could have this property.

\begin{theorem}		\label{00.5}
	Let $G$ be an amenable, just infinite group. If $(\rad\ell^1(G)^{**})^{\Box 2} = \{ 0 \}$ then $G$ is hereditarily just infinite. 
\end{theorem}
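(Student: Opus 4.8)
The plan is to prove the contrapositive: if $G$ is an amenable just infinite group which is not hereditarily just infinite, then $(\rad\ell^1(G)^{**})^{\Box 2} \neq \{0\}$. By the structure theory of just infinite groups due to Wilson \cite{Wi71} and Grigorchuk \cite{Gr00}, such a $G$ is either a branch group or else possesses a finite-index normal subgroup $N$ isomorphic to a direct power $H^{(n)}$ of a hereditarily just infinite group $H$ with $n \geq 2$ (the value $n = 1$ cannot occur here, because a group that is virtually hereditarily just infinite is itself hereditarily just infinite, which is immediate from the definitions). If $G$ is a branch group, then by Theorem~\ref{00.2} the algebra $\ell^1(G)^{**}$ contains a nilpotent left ideal $I$ of index at least $3$; since a nilpotent left ideal is contained in the Jacobson radical, $I \subseteq \rad\ell^1(G)^{**}$, and therefore $(\rad\ell^1(G)^{**})^{\Box 2} \supseteq I^{\Box 2} \neq \{0\}$.

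So suppose instead that $G$ has a finite-index normal subgroup $N \cong H^{(n)}$ with $H$ hereditarily just infinite and $n \geq 2$. I will exhibit two radical elements of $\ell^1(G)^{**}$ with non-zero product. Being a subgroup of the amenable group $G$, the group $H$ is infinite and amenable, so it carries a non-zero invariant difference. Let $H_1, H_2 \leq N$ be the first two direct factors of $N$: each is an infinite subgroup of $G$ that is normal in $N$, and $H_1, H_2$ commute with one another and satisfy $H_1 \cap H_2 = \{e\}$. Transporting a fixed non-zero invariant difference on $H$ to $H_1$ and to $H_2$ and forming the canonical lifts to $\ell^1(N)^{**}$ produces invariant differences $\Gamma_1, \Gamma_2 \in \ell^1(N)^{**}$ concentrated on $H_1$ and on $H_2$.

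Two assertions must now be proved. First, $\Gamma_1, \Gamma_2 \in \rad\ell^1(G)^{**}$. Since $H_i$ is \emph{normal} in $N$, the left ideal of $\ell^1(N)^{**}$ generated by $\Gamma_i$ is square-zero: each of its elements is left-$H_i$-invariant and annihilates every function in $\ell^\infty(N)$ lifted from $N/H_i$, and for two such elements $m, n$ one has $\langle m \Box n, f\rangle = \langle m, n \Box f\rangle$ with $n \Box f$ lifted from $N/H_i$, so that $m \Box n = 0$; this square-zero phenomenon is precisely the mechanism behind the nilpotence of $\Asn$ in Theorem~\ref{00.1}. Hence $\Gamma_i \in \rad\ell^1(N)^{**}$, and because $[G:N]$ is finite, the standard compatibility of the radical with passage to a finite-index overgroup gives $\Gamma_i \in \rad\ell^1(G)^{**}$. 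Second, and this is the crux, $\Gamma_1 \Box \Gamma_2 \neq 0$. Here one computes with the first Arens product directly: evaluating $\Gamma_1 \Box \Gamma_2$ on a function on $N$ built from the product decomposition $H_1 \times H_2 \leq N$, one finds that $\Gamma_1 \Box \Gamma_2$ restricts on $H_1 \times H_2$ to the ``tensor product'' of the two invariant differences, which is non-zero. This adapts the computation by which $\Z^2$ was shown in \cite{W2} to answer the question of Dales and Lau negatively, with $H \times H$ in place of $\Z \times \Z$; it uses nothing about $H$ beyond its being infinite and amenable, via a realization of the invariant difference as a weak-$*$ cluster point of a net of almost-invariant, mean-zero measures in $\ell^1(H)$. (Alternatively, $\Asn(G)^{\Box 2} \neq \{0\}$ can be read off from Theorem~\ref{00.1}, whose formulas give the index of nilpotence of $\Asn(G)$ as $n + 1 \geq 3$ for $G$ as above.)

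Combining the two assertions, $\Gamma_1 \Box \Gamma_2$ is a non-zero element of $(\rad\ell^1(G)^{**})^{\Box 2}$, contradicting the hypothesis, and the contrapositive is established. The main obstacle, I expect, is the non-vanishing of $\Gamma_1 \Box \Gamma_2$: the first Arens product is only separately weak-$*$ continuous, so it cannot be evaluated naively as a limit of products, and the argument needs a judiciously chosen test function together with explicit control of the Arens multiplication restricted to the subgroup $H_1 \times H_2$. The radical-membership step and the finite-index descent are, by comparison, routine; they are among the structural facts that already underpin Theorems~\ref{00.1} and \ref{00.2}.
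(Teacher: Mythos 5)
Your overall strategy is sound and its ingredients largely match the paper's: the branch case is disposed of exactly as in the paper via Theorem \ref{00.2}, the non-vanishing $\Gamma_1 \Box \Gamma_2 \neq 0$ is precisely Lemma \ref{0.4} (your tensor-product test-function computation is the paper's proof of that lemma), and the square-zero property of the left ideal generated by $\Gamma_i$ inside $\ell^1(N)^{**}$ is the $U_{i,j}$ mechanism cited from \cite{W2}. Where you genuinely diverge is in how radical membership in $\ell^1(G)^{**}$ is obtained. The paper never leaves $\ell^1(G)^{**}$: it proves that suitable left ideals of $\ell^1(G)^{**}$ itself are nilpotent of index $\geq 3$ (Theorem \ref{3.5b} for the virtually abelian case, Proposition \ref{5.3} for case (3) of Theorem \ref{00.4}), and this requires the conjugation/pigeonhole analysis (Lemmas \ref{0.2} and \ref{0.3}) because $G$-conjugation permutes the direct factors, so the left ideal of $\ell^1(G)^{**}$ generated by $\Gamma_1$ is \emph{not} square-zero. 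You instead get radical membership cheaply inside $\ell^1(N)^{**}$ and then invoke ``the standard compatibility of the radical with passage to a finite-index overgroup'' to conclude $\rad \ell^1(N)^{**} \subseteq \rad \ell^1(G)^{**}$. This is the one real gap as written: the statement is not in the paper, is not an off-the-shelf Banach-algebra fact, and you give no argument for it. It is, however, true and fixable: by Lemma \ref{0.1}, $\ell^1(G)^{**} = \bigoplus_j \ell^1(N)^{**} \Box \delta_{t_j}$ is a crossed product of $\ell^1(N)^{**}$ by the finite group $G/N$ (take $N$ normal, as you may), conjugation by each $\delta_{t_j}$ is an automorphism of $\ell^1(N)^{**}$ and so preserves its radical, every simple left $\ell^1(G)^{**}$-module is finitely generated over $\ell^1(N)^{**}$, and Nakayama's lemma then forces $\rad \ell^1(N)^{**}$ to annihilate every such module, giving the containment. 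If you supply that lemma, your route is a legitimate alternative which trades the paper's explicit nilpotence computations in $\ell^1(G)^{**}$ for a general finite-index radical-ascent principle; the paper's route has the advantage of yielding the sharper quantitative statements (indices of nilpotence) that it needs elsewhere. Two smaller points: your parenthetical that ``a group that is virtually hereditarily just infinite is itself hereditarily just infinite, which is immediate from the definitions'' is false for general groups (e.g.\ $H \times F$ with $F$ finite non-trivial is not even just infinite) and, under the standing just-infinite hypothesis, is exactly Lemma \ref{5.4}, whose proof uses \cite[Proposition 3]{Wi71}; and your alternative of ``reading off'' $\Gamma_1 \Box \Gamma_2 \neq 0$ from Theorem \ref{00.1} does not quite work as stated, since non-nilpotence of $\Asn(G)^{\Box 2}$ only gives \emph{some} non-vanishing product of invariant differences, not the specific product of the two elements you have placed in the radical.
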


The paper is organised as follows. In Section 2 we set out our notation, and recall some background. In particular, Section 2.2 recalls the basic theory of just infinite groups and branch groups, whereas Section 2.3 gives the relevant background concerning Banach algebras and Arens products. In Section 3 we define the algebras $\Asn(G)$ and $\Jsn(G)$ and prove some results about invariant differences for a general group $G$. Section 4 is devoted to understanding when $\Asn(G)$ and $\Jsn(G)$ are nilpotent, in the case that $G$ is an amenable just infinite group or a virtually abelian group. Theorem \ref{00.1} is proved at the end of this section. Finally, in Section 5 we prove Theorems \ref{00.2} and \ref{00.5}, as well as Proposition \ref{5.2}, which states that when $G$ is an amenable branch group, $\rad \ell^1(G)^{**}$ contains nilpotent elements (not just left ideals) of arbitrarily large index.

\section{Background and Notation}
\subsection{General Notation}
$\empty$
For us $\N = \{1,2,3,\ldots\}$, and we denote the group of integers by $\Z$. Given a set $E$ and a subset $F$ we write $\chi_F \colon E \rightarrow \{ 0,1 \}$ for the characteristic function of $F$. 

Henceforth all algebras will be associative and taken over the complex numbers, although they need not be unital. Let $A$ be an algebra. We say that $A$ is \textit{nilpotent} if $A^n = \{ 0 \}$ for some $n \in \N$, and we call the smallest such $n$ the \textit{index of nilpotence}. Now suppose that $A$ is unital. We define the \textit{Jacobson Radical} of $A$, denoted by $\rad A$, to be the intersection of the maximal left ideals of $A$; many alternative characterisations are available (see e.g. \cite[Section 1.5]{D}). The Jacobson radical is a two-sided ideal of $A$. The main fact about the Jacobson radical that we shall require in this article is that every nilpotent left ideal of $A$ is contained in $\rad A$.

\subsection{Group Theory}
$\empty$
We denote 
the symmetric group on $k$ elements by $S_k$ and 
the infinite dihedral group by $D_\infty$. We write the trivial group as $1$ and typically we denote the identity element of a group by $e$. Let $G$ be a group, and let $t, u \in G$. We write conjugation of $u$ by $t$ as $u^t := tut^{-1}$.

We recall that a subgroup $H$ of $G$ is said to be \textit{subnormal}, written here as $H \approxlhd G$, if there exists $n \in \N$, and subgroups $H_1, \ldots, H_n \leq G,$ such that
$$H = H_n \lhd H_{n-1} \lhd \dots \lhd H_1 \lhd G.$$

We write the index of a subgroup $H$ of a group $G$ as $[G:H]$. An infinite group for which every non-trivial normal subgroup has finite index is said to be \textit{just infinite}. The group $G$ is \textit{hereditarily just infinite} if every non-trivial normal subgroup of $G$ is just infinite; by \cite[Proposition 4]{Wi71} this is equivalent to asking that every subnormal subgroup of $G$ has finite index. Examples of hereditarily just infinite groups include $\Z$ and $D_\infty$.

We record the following easy Lemma, that is probably well-known to experts.

\begin{lemma}		\label{5.4}
	Let $G$ be a just infinite group with a finite index, hereditarily just infinite subgroup. Then $G$ is hereditarily just infinite.
\end{lemma}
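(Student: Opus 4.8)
The plan is to show that every non-trivial normal subgroup of $G$ is just infinite, using the given finite index, hereditarily just infinite subgroup as a lever. Let me denote by $H \leq G$ the finite index, hereditarily just infinite subgroup, and let $N \lhd G$ be non-trivial; I want to prove $N$ is just infinite.

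First I would reduce to the case where $N$ is infinite and check $N$ itself is finitely generated / has finite index — indeed, since $G$ is just infinite and $N \lhd G$ is non-trivial, $[G:N] < \infty$, and since $H$ has finite index in $G$, the subgroup $N \cap H$ has finite index in $G$, hence in $H$. Now $N \cap H$ is subnormal in $H$ (it is normal in $H$), so by the characterization quoted from \cite[Proposition 4]{Wi71}, $N \cap H$ has finite index in $H$ — which we already know — and moreover, being a finite-index subgroup of the hereditarily just infinite group $H$, I claim $N \cap H$ is itself hereditarily just infinite. This is the one genuine sub-point: a finite-index subgroup of a hereditarily just infinite group is hereditarily just infinite. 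I would justify it again via the subnormal-subgroup characterization: if $K \approxlhd N \cap H$, then $K \approxlhd H$ (concatenating the subnormal series, using $N \cap H \leq H$ with finite index, which is in particular subnormal in $H$), so $[H:K] < \infty$, hence $[N\cap H : K] < \infty$; therefore every subnormal subgroup of $N \cap H$ has finite index, i.e. $N \cap H$ is hereditarily just infinite.

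Next I would transfer this to $N$. We have $N \cap H \leq N$ with $[N : N \cap H] \leq [G : H] < \infty$, so $N$ has a finite-index subgroup ($N\cap H$) that is hereditarily just infinite, and in particular just infinite, hence infinite; so $N$ is infinite. It remains to see $N$ is just infinite. Let $M \lhd N$ be non-trivial; I must show $[N : M] < \infty$. Consider $M \cap (N \cap H) = M \cap H$, which is normal in $N \cap H$. If $M \cap H$ is non-trivial, then since $N \cap H$ is just infinite, $[N \cap H : M \cap H] < \infty$, and combined with $[N : N\cap H] < \infty$ we get $[N:M] \leq [N : M \cap H] < \infty$, as desired. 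The remaining possibility is $M \cap H = 1$; here I would argue this cannot happen for $M$ non-trivial: $M \cap H$ has index at most $[N : H \cap N] < \infty$ in $M$, so $M$ would be finite, but then the normal closure considerations — actually more directly, $M \lhd N$ and $N \lhd G$ with $[G:N]<\infty$, so $M$ is subnormal in $G$; alternatively, since $M$ is finite and normal in $N$, and $N$ acts on the finite set $M$ with $N\cap H$ acting trivially (as $M \cap H = 1$ forces... ) — cleaner: $M \cap H = 1$ with $[M : M\cap H]<\infty$ gives $M$ finite; a finite normal subgroup $M$ of $N$ has centralizer $C_N(M)$ of finite index in $N$, and $C_N(M) \cap (N\cap H)$ is a finite-index, hence non-trivial (as $N \cap H$ is infinite), normal subgroup of $N \cap H$ commuting with $M$, so $M \times (C_N(M)\cap N \cap H)$-type arguments show $M$ normalizes... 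I would instead simply invoke: $M$ finite and normal in $N$, $N$ infinite just-infinite-candidate — actually the slickest route is that a just infinite group (which $N$ will be shown to be, but to avoid circularity) — so let me instead note $N \cap H$ is infinite and $M$ is finite with $M \lhd N$, hence $M \cap (N \cap H) \lhd N \cap H$ is finite, forcing $M \cap (N\cap H) = 1$ since $N \cap H$ is just infinite (a just infinite group has no non-trivial finite normal subgroups). But then $M$ embeds into $N / (N \cap H)$, which is finite; fine, $M$ is finite, consistent, not yet a contradiction — so I conclude instead directly that $[N:M]$ is finite by: $MN\cap H$... I will in the write-up handle the finite case by observing $N\cap H \leq C_N(M)$ is false in general, so the cleanest is: since $N \cap H$ is infinite and just infinite, and $M \lhd N$ non-trivial, the subgroup $M(N\cap H)$ has $M \cap (N \cap H) \lhd N\cap H$; if that intersection is trivial $M$ centralizes a finite-index subgroup... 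I expect the main obstacle to be precisely this disposal of the "$M \cap H = 1$'' case cleanly, and I would resolve it by the standard fact that a just infinite group has no non-trivial finite normal subgroup together with a short centralizer argument, or more economically by replacing $M$ with its normal core considerations in $G$. Modulo that, the proof is a routine index-chase.

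\begin{proof}[Proof sketch to be completed]
The strategy is as outlined above: pass to $N \cap H$, show it is hereditarily just infinite (hence just infinite and infinite) using the subnormal-index characterization, and then deduce that any non-trivial $M \lhd N$ has finite index in $N$ by intersecting with the infinite just infinite group $N \cap H$ and handling the finite-intersection case via the absence of non-trivial finite normal subgroups in just infinite groups.
\end{proof}
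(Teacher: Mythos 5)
Your overall route parallels the paper's proof: intersect the relevant subgroup with $H$ and split according to whether the intersection is trivial. The non-trivial-intersection case is correct, and in fact simpler than you make it: since $N\cap H$ is a non-trivial (indeed finite-index, hence infinite) normal subgroup of the hereditarily just infinite group $H$, it is just infinite directly from the definition, which is all that case needs; you do not need the general claim that a finite-index subgroup of a hereditarily just infinite group is hereditarily just infinite. Your justification of that general claim is in any case flawed, since a finite-index subgroup need not be subnormal; your concatenation of subnormal series only works because $N\cap H$ is normal in $H$, which you had already observed, so this slip is harmless here but should be removed.

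The genuine gap is the case $M\cap H=1$, which you explicitly leave unresolved. There $M$ is a non-trivial finite normal subgroup of $N$, and proving $N$ just infinite requires showing this cannot happen; none of the routes you sketch closes it. The fact that a just infinite group has no non-trivial finite normal subgroup cannot be applied to $N$ (circular, as you note) nor to $G$ (since $M$ is only subnormal, not normal, in $G$), the centralizer remarks are not carried through, and the normal core of $M$ in $G$ will typically be trivial, so that suggestion fails. What is missing is exactly the ingredient the paper imports from \cite[Proposition~3]{Wi71}: a non-trivial subnormal subgroup of a just infinite group cannot be finite; since $M\lhd N\lhd G$, this disposes of the case at once. (Alternatively, one can argue that the normal closure $M^G$ lies in $N$ and is a directed union of finite normal subgroups of $N$; each such finite subgroup meets the infinite just infinite group $N\cap H$ trivially, so $M^G\cap H=1$, contradicting the fact that $M^G$, being non-trivial and normal in $G$, has finite index. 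But some such argument must actually be supplied.) As written, your proof is incomplete at precisely the step the paper settles by citing Wilson.
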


\begin{proof}
	Let $H$ be a finite index, hereditarily just infinite subgroup of $G$, which without loss of generality is normal,  and let $1 \neq K_2 \lhd K_1 \lhd G$.
	It is sufficient to prove that $[G : K_2] < \infty$ (which in fact is also necessary since $G$ is just infinite). Indeed, $K_2 \cap H$ is either trivial or finite index in $H$. If  $K_2 \cap H = 1$ then $K_2 \cong HK_2/H$, which is finite, contradicting \cite[Proposition 3]{Wi71}. Hence $[H : K_2 \cap H]<\infty$ which implies that $K_2$ is finite index in $G$. 
\end{proof}

An important class of groups for us will be branch groups, which we define now. Note that there are other variations of this definition that do not quite agree with ours, which is taken from \cite[Definition 1.1]{BGS}.

\begin{definition}		\label{00.3}
	A group $G$ is said to be a \textit{branch group} if there exist descending chains of subgroups $H_1 \supset H_2 \supset \cdots$ and $L_1 \supset L_2 \supset \cdots$, and a sequence of natural numbers $(k_i)$, such that 
	\begin{enumerate}
		\item[\rm (1)] we have $\bigcap_{i=1}^\infty H_i = 1$;
		\item[\rm (2)] each $H_i$ is a finite index normal subgroup of $G$;
		\item[\rm (3)] each $H_i$ is equal to a direct product
		\begin{equation} 	\label{eq00.3}
			L_i^{(1)} \times \cdots \times L_i^{(k_i)},
		\end{equation}
		 where each of the factors is isomorphic to $L_i$;
		\item[\rm (4)] for each $i \in \N$ the number $m_i := k_{i+1}/k_i$ is an integer which is at least 2, and the direct product decomposition \eqref{eq00.3} for $H_{i+1}$ properly refines that of $H_i$, in the sense that each factor $L_i^{(j)}$ contains the subgroups $L_{i+1}^{(l)}$ for $l = (j-1)m_{i+1}+1, \ldots, jm_{i+1}$;
		\item[\rm (5)] the action of $G$ on $H_i$ by conjugation transitively permutes the subgroups $L_i^{(1)}, \ldots, L_i^{(k_i)}$.
		\end{enumerate}
\end{definition}

We refer to the sequence of subgroups and natural numbers $(H_i, L_i, k_i)_{i \in \N}$ as a \textit{branch structure} of $G$.

Grigorchuk \cite{Gr00}, building on the work of Wilson \cite{Wi71}, showed that every just infinite group is either a just infinite branch group, or else has a finite index subgroup which is isomorphic to a direct product of a finite number of copies of a group $H$, which is either simple or residually finite and hereditarily just infinite. We prefer to use a version of this result that is closer to Wilson's original statement as follows. Recall that the \textit{Baer radical} of a group $G$ is the subgroup generated by the subnormal cyclic subgroups of $G$.

\begin{theorem}		\label{00.4}
	Let $G$ be a just infinite group. Then $G$ satisfies one of the following:
	\begin{enumerate}
		\item[\rm (1)] $G$ is a just infinite branch group;
		\item[\rm (2)] $G$ is a just infinite, finitely-generated virtually abelian group;
		\item[\rm (3)] $G$ has a finite index normal subgroup $H$, which may be expressed as direct product $H = H_1 \times \cdots \times H_n$ of mutually isomorphic groups $H_i \ ( i = 1, \ldots, n)$, which are hereditarily just infinite with trivial Baer radical, and the action of $G$ on $H$ by conjugation transitively permutes these factors.
	\end{enumerate}
\end{theorem}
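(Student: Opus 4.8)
The plan is to obtain Theorem~\ref{00.4} by repackaging the structure theorem of Grigorchuk \cite{Gr00}, which itself builds on Wilson \cite{Wi71}. In the detailed form proved there, a just infinite group $G$ is either a just infinite branch group --- conclusion~(1) --- or else has a finite index \emph{normal} subgroup $H$ which is an internal direct product $H = L^{(1)} \times \cdots \times L^{(n)}$ of mutually isomorphic subgroups $L^{(i)} \cong L$, transitively permuted by conjugation in $G$, where $L$ is simple, or residually finite and hereditarily just infinite. Here $L$ is infinite, for otherwise $H$ would be a finite subgroup of finite index in $G$, making $G$ finite, which is impossible. So the whole task reduces to deciding, in the second alternative, whether $G$ satisfies conclusion~(2) or~(3), and to verifying the additional properties stated there.

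First I would split on the nature of $L$. A simple $L$ is hereditarily just infinite, since its only non-trivial normal subgroup is $L$ itself and that is just infinite, and it has trivial Baer radical, since its only subnormal subgroups are $1$ and $L$ while $L \not\cong \Z$ (the infinite cyclic group not being simple). If instead $L$ is residually finite and hereditarily just infinite, then by \cite[Proposition 4]{Wi71} every subnormal subgroup of $L$ has finite index; hence, should the Baer radical of $L$ be non-trivial, $L$ would contain a non-trivial subnormal cyclic subgroup of finite index, which is isomorphic to $\Z$ because $L$ is infinite, so that $L$ is virtually $\Z$; while if the Baer radical of $L$ is trivial, then $L$ is a hereditarily just infinite group with trivial Baer radical. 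Combining the cases: either $L$ is virtually $\Z$, in which case $H$ is virtually $\Z^n$, so $G$ is virtually abelian, and, $L$ being virtually cyclic hence finitely generated, so is $H$ and therefore so is its finite index overgroup $G$, giving conclusion~(2); or $L$ is hereditarily just infinite with trivial Baer radical, and then taking $H_i := L^{(i)}$ yields conclusion~(3).

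The substantive content --- the trichotomy itself, the normality of $H$, and the transitivity of the conjugation action on the direct factors --- is imported wholesale from \cite{Wi71} and \cite{Gr00}, so what actually remains to be done is the elementary analysis above. I expect the only genuinely delicate point to be getting the boundary between conclusions~(2) and~(3) right: the key is the observation that, for the common factor $L$, having non-trivial Baer radical is equivalent, via Wilson's Proposition~4, to being virtually cyclic, and that this is precisely the situation that forces one into the finitely generated virtually abelian case. The secondary observation --- that virtual cyclicity of $L$ propagates up to finite generation of $G$ --- is routine but should not be skipped, since the finite generation asserted in conclusion~(2) is not part of Grigorchuk's statement as it is customarily quoted.
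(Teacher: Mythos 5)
Your proposal is correct, and it is essentially what the paper intends: the paper does not prove Theorem \ref{00.4} at all, but states it as a repackaging of the Wilson--Grigorchuk structure theory (\cite{Wi71}, \cite{Gr00}), exactly the result you import. The only content to supply is the bridge you describe, and your handling of it is sound: in the non-branch case with $H = L^{(1)} \times \cdots \times L^{(n)}$, an infinite simple $L$ is hereditarily just infinite with trivial Baer radical (its only subnormal subgroups are $1$ and $L$, and $L \not\cong \Z$), while for $L$ hereditarily just infinite and residually finite the dichotomy on the Baer radical is correct, since by \cite[Proposition 4]{Wi71} a non-trivial cyclic subnormal subgroup of $L$ has finite index, forcing $L$ to be virtually $\Z$, hence $H$ virtually $\Z^n$ and $G$ finitely generated virtually abelian; otherwise $H_i := L^{(i)}$ gives case (3), with normality of $H$ and transitivity of the conjugation action inherited from the cited theorem. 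Your closing remark is also apt: the finite generation in case (2) is the one clause not literally in the form of Grigorchuk's statement usually quoted, and your propagation argument (virtually cyclic $L$ gives a finite-index $\Z^n$ in $G$, and a finite-index finitely generated subgroup makes $G$ finitely generated) settles it; this matches Wilson's own treatment of just infinite groups with non-trivial Baer radical.
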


For a just infinite group with trivial Baer radical, which corresponds to cases (1) and (3) above, Wilson \cite{Wi71} defines the \textit{structure lattice} of $G$ to be the set of subnormal subgroups of $G$ modulo the equivalence relation of commensurability; that is $H,K \approxlhd G$ are equivalent if $[H: H \cap K ]< \infty$ and $[K: H \cap K ]< \infty$. We denote the structure lattice of $G$ by $\mathcal{L}(G)$. It is a lattice with respect to the operations
$$[H] \wedge [K] = [H\cap K] \qquad \text{ and } \qquad [H] \vee [K] = [ \langle H, K \rangle ],$$
where $[H]$ and $[K]$ denote the equivalence classes of $H,K \approxlhd G$ respectively. Wilson completely classifies the isomorphism type of $\mathcal{L}(G)$, and it follows from his work that $\mathcal{L}(G)$ is infinite if and only if $G$ is a branch group.

The trichotomy of Theorem \ref{00.4} will be an important tool for our study of $\Asn(G)$, for a just infinite group $G$. Since we focus on amenable groups in this article, let us point out that each of the three classes of groups appearing in the trichotomy contains interesting examples which are amenable. Indeed, the most famous examples of branch groups, namely the Grigorchuk group and the Gupta-Sidki p-groups, are amenable \cite{BKN} and just infinite \cite[Theorem 4]{Gr00}. For the second class we note that any wreath product of the form $D_\infty \wr F$, where $F$ is a finite group, is amenable and just infinite \cite[Construction 1.(a)]{Wi71}, and contains a finite index copy of $\Z^{\vert F \vert}$. For the final class we can take any group of the form $S \wr F$, where $F$ is again a finite group, and $S$ is an infinite amenable simple group. Note that the first two examples are also finitely-generated, and the third example is finitely-generated if $S$ is.  Infinite finitely-generated amenable simple groups have been constructed by Juschenko and Monod \cite{JM}.

\subsection{Arens Products and Banach Algebras}
$\empty$
We shall denote the dual space of a Banach space $E$ by $E^*$, and it's bidual by $E^{**}$. Given $x \in E$ and $\lambda \in E^*$ we write $\langle x, \lambda \rangle$ for the value of $\lambda$ applied to $x$; however for $\lambda \in E^*$ and $\Phi \in E^{**}$ we typically write $\langle \Phi, \lambda \rangle$ for the value of $\Phi$ applied to $\lambda$ in order to more easily identify $E$  with its canonical image inside $E^{**}$.

Given a group $G$ we define 
$$\ell^1(G) = \left\{ f \colon G \rightarrow \C : \sum_{t \in G} \vert f(t) \vert < \infty \right\}.$$
This is a Banach space with the norm of $f \in \ell^1(G)$ given by $\Vert f \Vert = \sum_{t \in G} \vert f(t) \vert$, and becomes a Banach algebra with multiplication given by convolution: 
$$(f*g)(t) = \sum_{s \in G} f(s)g(s^{-1}t) \qquad (t \in G).$$

 Given another group $H$, and a group homomorphism $\varphi \colon G \rightarrow H$, we shall often identify $\varphi$ with the bounded algebra homomorphism $\varphi \colon \ell^1(G) \rightarrow \ell^1(H)$ that it induces; however, we shall write 
$\varphi^{**} \colon \ell^1(G)^{**} \rightarrow \ell^1(H)^{**}$ for the second conjugate map.

Let $A$ be a Banach algebra. We recall that the first Arens products $\Box$ on $A^{**}$ is defined in three stages via the following formulae: 
$$
\langle \Psi \Box \Phi, \lambda \rangle = \langle \Psi, \Phi \cdot \lambda \rangle; \qquad
\langle a, \Phi \cdot \lambda \rangle = \langle \Phi, \lambda \cdot a \rangle; \qquad
\langle b, \lambda \cdot a \rangle  = \langle ab, \lambda \rangle, 
$$
for $\Phi, \Psi \in A^{**}, \lambda \in A^*,$ and $a, b \in A$. The product $\Box$ makes $A^{**}$ into a Banach algebra and agrees with the original multiplication on $A$ when it is identified with its image in $A^{**}$ under the canonical embedding. Right multiplication with a fixed element with respect to $\Box$ is weak*-continuous, but left multiplication can fail to be weak*-continuous in general.
Applying Goldstein's Theorem, this leads to the following formula for the first Arens product:
$$\Phi \Box \Psi = \lim_\alpha \lim_\beta a_\alpha b_\beta,$$
where $(a_\alpha)$ and $(b_\beta)$ are nets in $A$ converging to $\Phi$ and $\Psi$ respectively, and all limits are taken with respect to the weak*-topology.

There is also a second Arens product, $\Diamond$, which is again extends the given multiplication on $A$, but is weak*-continuous on the left instead of the right. The two products can be different, and always are when $A = \ell^1(G)$, for an infinite group $G$. 
In this article we make the arbitrary choice to study $\Box$ instead of $\Diamond$; analogous results to those we prove here should hold for $\Diamond$, with left and right swapped in appropriate places. For more background on Arens products see \cite{DL,Pa94}.

We extend our notation for conjugation by group elements from the group itself to $\ell^1(G)^{**}$ and write $\Phi^t = \delta_{t}\Box \Phi \Box \delta_{t^{-1}} \ (t \in G, \ \Phi \in \ell^1(G)^{**})$.


Let $H$ be a subgroup of $G$. We say that $\Phi \in \ell^1(G)^{**}$ is \textit{$H$-left-invariant} if $\delta_t \Box \Phi = \Phi \ (t \in H)$. If $H=G$ we might simply say that $\Phi$ is left-invariant. Also we shall always use the phrase ``invariant mean'' to mean a left-invariant mean, since we never consider right-invariant means in this paper; we take the same approach when we define invariant differences in Section 3 below. 

Write $\iota \colon \ell^1(H) \hookrightarrow \ell^1(G)$ for the inclusion map. In what follows we shall say that $\Phi \in \ell^1(G)^{**}$ is \textit{supported on $H$} if $\Phi \in \im \iota^{**}$. Equivalently, $\Phi$ is supported on $H$ as a finitely additive measure. When there is no ambiguity we usually identify $\ell^1(H)$ with its image inside $\ell^1(G)$, and $\ell^1(H)^{**}$ with its image inside $\ell^1(G)^{**}$, which coincides with 
$\overline{\im \iota}^{\, w^*}$. Hence, with these identifications, we may write $\ell^1(H)^{**} = \overline{\ell^1(H)}^{\, w^*}$. Similarly, for $t \in G$, we may identify $\ell^1(Ht)^{**}$ with a weak*-closed linear subspace of $\ell^1(G)^{**}$, which in fact coincides with $\ell^1(H)^{**} \Box \delta_t$.

We record the following lemma, which tells us how to break up an element of $\ell^1(G)^{**}$ in terms of cosets of a subgroup. The proof is routine.

\begin{lemma} 	\label{0.1}
	Let $G$ be a group with a finite index subgroup $H$. Let $n = [G:H]$ and let 
	$\{ t_1, t_2, \ldots, t_n \}$ be a right transversal for $H$ in $G$.  Then
	\begin{enumerate}
		\item[\rm (i)] $\ell^1(G)^{**} = \bigoplus_{j=1}^n \ell^1(Ht_j)^{**}$ as Banach spaces;
		\item[\rm (ii)] $\ell^1(Ht_j)^{**} = \ell^1(H)^{**} \Box \delta_{t_j} \ (j=1, \ldots, n).$
		In particular, each $\Phi \in \ell^1(G)^{**}$ can be written uniquely as 
		\begin{equation}		\label{eq1.1}
		\Phi = \Phi_1 \Box \delta_{t_1} + \cdots + \Phi \Box \delta_{t_n},
		\end{equation}
		for some $\Phi_1, \ldots, \Phi_n \in \ell^1(H)^{**}$.
	\end{enumerate}
	The analogous results hold for left transversals.
\end{lemma}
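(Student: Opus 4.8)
The plan is to push the set-level partition $G = Ht_1 \sqcup \cdots \sqcup Ht_n$ through the bidual functor, and then identify the resulting summands by means of translation operators, whose second adjoints implement $\Box$-multiplication by point masses. Throughout I use the identifications of $\ell^1(H)^{**}$ and each $\ell^1(Ht)^{**}$ with weak*-closed subspaces of $\ell^1(G)^{**}$ recalled just before the lemma.

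For (i), let $P_j \in \mathcal B(\ell^1(G))$ be the restriction-to-$Ht_j$ operator, i.e. $(P_j f)(s) = f(s)$ for $s \in Ht_j$ and $0$ otherwise. These are norm-one idempotents with $P_iP_j = 0$ for $i \neq j$, with $\sum_{j=1}^n P_j = \id$, with $\Vert f \Vert = \sum_{j=1}^n \Vert P_j f \Vert$, and with $P_j(\ell^1(G)) = \ell^1(Ht_j)$. Taking second adjoints — a linear operation preserving composition and fixing $\id$ — gives idempotents $P_j^{**} \in \mathcal B(\ell^1(G)^{**})$ with $P_i^{**}P_j^{**} = 0$ for $i \neq j$ and $\sum_j P_j^{**} = \id$, so that $\ell^1(G)^{**} = \bigoplus_{j=1}^n P_j^{**}(\ell^1(G)^{**})$ as a Banach space. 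It remains to see that $P_j^{**}(\ell^1(G)^{**}) = \ell^1(Ht_j)^{**}$: the left-hand side equals $\ker(\id - P_j^{**})$, hence is weak*-closed; it contains $\ell^1(Ht_j) = P_j(\ell^1(G))$; and, by weak*-continuity of $P_j^{**}$ together with Goldstine's theorem, it is contained in $\overline{\ell^1(Ht_j)}^{\,w^*}$. Thus it equals $\overline{\ell^1(Ht_j)}^{\,w^*} = \ell^1(Ht_j)^{**}$, proving (i).

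For (ii), let $R_{t_j}\colon \ell^1(G)\to\ell^1(G)$, $f \mapsto f * \delta_{t_j}$; this is a surjective linear isometry with $R_{t_j}^{-1} = R_{t_j^{-1}}$, sending $\delta_h \mapsto \delta_{ht_j}$, hence $R_{t_j}(\ell^1(H)) = \ell^1(Ht_j)$. A short unravelling of the three defining identities for $\Box$ shows that $\delta_{t_j}\cdot\lambda = R_{t_j}^*\lambda$ for $\lambda \in \ell^\infty(G)$, whence $\langle \Phi\Box\delta_{t_j}, \lambda\rangle = \langle \Phi, R_{t_j}^*\lambda\rangle$ for all $\Phi$; that is, the map $\Phi \mapsto \Phi\Box\delta_{t_j}$ on $\ell^1(G)^{**}$ is exactly $R_{t_j}^{**}$. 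As the second adjoint of a surjective isometry, $R_{t_j}^{**}$ is a weak*-homeomorphism of $\ell^1(G)^{**}$, so it carries $\ell^1(H)^{**} = \overline{\ell^1(H)}^{\,w^*}$ onto $\overline{R_{t_j}(\ell^1(H))}^{\,w^*} = \overline{\ell^1(Ht_j)}^{\,w^*} = \ell^1(Ht_j)^{**}$, giving (ii). Finally, for $\Phi \in \ell^1(G)^{**}$ the components $P_j^{**}\Phi \in \ell^1(Ht_j)^{**}$ are unique by (i), and putting $\Phi_j := (P_j^{**}\Phi)\Box\delta_{t_j^{-1}} = R_{t_j^{-1}}^{**}(P_j^{**}\Phi) \in \ell^1(H)^{**}$ we obtain the decomposition \eqref{eq1.1}, the $\Phi_j$ being unique because $R_{t_j}^{**}$ is injective. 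The left-transversal version is identical after replacing $Ht_j$ by $t_jH$ and $R_{t_j}$ by $L_{t_j}\colon f \mapsto \delta_{t_j}*f$; one checks analogously that $\Phi \mapsto \delta_{t_j}\Box\Phi$ equals $L_{t_j}^{**}$, which is still weak*-continuous, since left $\Box$-multiplication by an element of $\ell^1(G)$ (as opposed to a general element of $\ell^1(G)^{**}$) is weak*-continuous.

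I do not expect a genuine obstacle here. The only points requiring a moment's care are the computation $\Phi\Box\delta_{t_j} = R_{t_j}^{**}\Phi$ straight from the definition of the first Arens product, and the verification that the ranges of the idempotents $P_j^{**}$ are precisely the weak*-closures of the $\ell^1(Ht_j)$; both are routine bidual manipulations, in accordance with the author's remark that the proof is routine.
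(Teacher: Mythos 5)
Your proof is correct, and since the paper simply states that the proof is routine (indeed, the identification $\ell^1(Ht_j)^{**} = \ell^1(H)^{**} \Box \delta_{t_j}$ is already noted in Section 2.3), there is no written argument to diverge from: your restriction idempotents $P_j^{**}$ plus the identity $\Phi \Box \delta_{t_j} = R_{t_j}^{**}\Phi$ are exactly the routine bidual manipulations the author has in mind. The only remark worth adding is that the displayed formula \eqref{eq1.1} in the statement contains a typo ($\Phi \Box \delta_{t_n}$ should read $\Phi_n \Box \delta_{t_n}$), which your decomposition $\Phi = \sum_j \Phi_j \Box \delta_{t_j}$ correctly supplies.
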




We refer to equation \eqref{eq1.1} as the \textit{right coset decomposition of $\Phi$ with respect to $t_1, \ldots, t_n$},  and we call the elements $\Phi_1, \ldots, \Phi_n$ appearing in the decomposition as the \textit{right coset factors of $\Phi$}. We refer similarly to the left coset decomposition with respect to a left transversal.

\section{Invariant Differences and the Algebra $\Asn(G)$}
\noindent
In this section we shall define invariant differences and the algebra $\Asn(G)$, and prove some lemmas about their properties for a general group $G$.

For a group $G$, we define an \textit{invariant difference} on $G$ to be an element 
$\Gamma \in \ell^1(G)^{**}$ such that $\delta_s \Box \Gamma = \Gamma \ (s \in G)$, and 
$\langle \Gamma, 1 \rangle = 0$. Note that ``invariant'' here is again used to mean ``left-invariant''. We denote the set of invariant differences on $G$ by $\D(G)$. When $G$ is a finite group $\D(G) = \{ 0 \}$.
On the other hand, in the infinite setting $\D(G)$ is an interesting object: an infinite group has a non-zero invariant difference if and only if the group is amenable. More precisely, there is a relationship between invariant means and invariant differences as follows. If we are given an infinite amenable group $G$ it is known that there are infinitely many (indeed $2^{2^{\vert G \vert}})$ invariant means on $G$. Given any two of these invariant means, $M_1$ and $M_2$ say, their difference $M_1 - M_2$ is an invariant difference on $G$, as is any scalar multiple of this. On the other hand, given $\Gamma \in \D(G)$ we may consider $\Gamma$ as a measure on the Stone--{\v C}ech compactification $\beta G$. Its total variation measure $\vert \Gamma \vert$ will then be a $G$-left-invariant positive measure on  $\beta G$, and in the case that $\Gamma \neq 0$ scaling it by $\vert \Gamma \vert (\beta G)^{-1}$ gives an invariant mean on $G$.  

We summarize some well known properties of $\D(G)$. Fairly accessible proofs of more general results can be found in \cite[Proposition 8.23]{DL} and \cite[Proposition 4.2]{W2}. 

\begin{lemma}		\label{2.0}
	The set $\D(G)$ is a weak*-closed ideal of $\ell^1(G)^{**}$, and every $\Gamma \in \D(G)$ satisfies $\Phi \Box \Gamma = \langle \Phi, 1 \rangle \Gamma \ ( \Phi \in \ell^1(G)^{**})$. Hence $\D(G)^{\Box 2} = \{ 0 \}$.
\end{lemma}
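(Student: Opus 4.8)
The plan is to prove the displayed multiplication formula $\Phi \Box \Gamma = \langle \Phi, 1 \rangle \Gamma$ first, since the remaining assertions follow from it with little extra effort. For a point mass the formula reads $\delta_s \Box \Gamma = \Gamma$, which is precisely the invariance of $\Gamma$ together with $\langle \delta_s, 1 \rangle = 1$; by bilinearity of $\Box$ it therefore holds for every element of $\spn\{ \delta_s : s \in G \}$, and hence, by norm continuity of the Arens product, for every $f \in \ell^1(G)$, where $\langle f, 1 \rangle = \sum_{t} f(t)$. To reach an arbitrary $\Phi \in \ell^1(G)^{**}$ I would apply Goldstine's theorem to obtain a bounded net $(f_\alpha)$ in $\ell^1(G)$ converging weak* to $\Phi$. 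Since right multiplication by the fixed element $\Gamma$ is weak*-continuous for $\Box$ (as recalled in Section 2.3), $f_\alpha \Box \Gamma \to \Phi \Box \Gamma$ weak*; on the other hand $f_\alpha \Box \Gamma = \langle f_\alpha, 1 \rangle \Gamma$ and $\langle f_\alpha, 1 \rangle \to \langle \Phi, 1 \rangle$ because $1 \in \ell^\infty(G) = \ell^1(G)^*$, so $f_\alpha \Box \Gamma \to \langle \Phi, 1 \rangle \Gamma$ weak*. Comparing the two limits gives the formula.

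It is immediate from the two defining conditions that $\D(G)$ is a linear subspace of $\ell^1(G)^{**}$. For weak*-closedness I would write $\D(G) = \bigl( \bigcap_{s \in G} \ker T_s \bigr) \cap \ker S$, where $T_s \colon \Gamma \mapsto \delta_s \Box \Gamma - \Gamma$ and $S \colon \Gamma \mapsto \langle \Gamma, 1 \rangle$. Here $S$ is weak*-continuous by the very definition of the weak*-topology, and each $T_s$ is weak*-continuous because left multiplication by an element of $A$ itself is a weak*-continuous operation for the first Arens product (unwinding the three-stage definition, $\langle \delta_s \Box \Gamma, \lambda \rangle = \langle \Gamma, \mu \rangle$ for a fixed $\mu \in \ell^\infty(G)$ depending only on $s$ and $\lambda$, namely a translate of $\lambda$). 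Thus $\D(G)$ is an intersection of weak*-closed subspaces.

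For the ideal property, let $\Gamma \in \D(G)$ and $\Phi \in \ell^1(G)^{**}$. Then $\Phi \Box \Gamma = \langle \Phi, 1 \rangle \Gamma$ lies in the subspace $\D(G)$. For $\Gamma \Box \Phi$, associativity and invariance give $\delta_s \Box (\Gamma \Box \Phi) = (\delta_s \Box \Gamma) \Box \Phi = \Gamma \Box \Phi$, so it is left-invariant; and unwinding the Arens formulae, using $\langle ab, 1 \rangle = \langle a, 1 \rangle \langle b, 1 \rangle$ for $a, b \in \ell^1(G)$, one finds $\Phi \cdot 1 = \langle \Phi, 1 \rangle 1$ in $\ell^\infty(G)$, so that $\langle \Gamma \Box \Phi, 1 \rangle = \langle \Gamma, \Phi \cdot 1 \rangle = \langle \Phi, 1 \rangle \langle \Gamma, 1 \rangle = 0$. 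Hence $\Gamma \Box \Phi \in \D(G)$, and $\D(G)$ is a two-sided ideal. Finally, for $\Gamma_1, \Gamma_2 \in \D(G)$ the formula yields $\Gamma_1 \Box \Gamma_2 = \langle \Gamma_1, 1 \rangle \Gamma_2 = 0$, so $\D(G)^{\Box 2} = \{ 0 \}$.

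I do not anticipate a genuine obstacle here; the only care needed is in the bookkeeping with the three-stage definition of the first Arens product — in particular identifying $\lambda \cdot \delta_s$ as a translate of $\lambda$ and computing $\Phi \cdot 1$ — and in keeping straight which one-sided multiplications are weak*-continuous (right multiplication by the fixed $\Gamma$ for the density argument, left multiplication by the fixed $\delta_s$ for weak*-closedness).
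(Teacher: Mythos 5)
Your proposal is correct, and every step checks out: the extension of $\Phi \Box \Gamma = \langle \Phi, 1 \rangle \Gamma$ from point masses to $\ell^1(G)$ by linearity and norm continuity and then to all of $\ell^1(G)^{**}$ via weak*-continuity of $\Psi \mapsto \Psi \Box \Gamma$; the weak*-closedness of $\D(G)$ as an intersection of kernels, using that left multiplication by $\delta_s$ (an element of the canonical image of $\ell^1(G)$, where $\lambda \cdot \delta_s$ is indeed the translate $t \mapsto \lambda(st)$) is weak*-continuous even though left multiplication by a general element of the bidual is not; the computation $\Phi \cdot 1 = \langle \Phi, 1 \rangle 1$ giving $\langle \Gamma \Box \Phi, 1 \rangle = 0$ and hence the two-sided ideal property; and the immediate deduction $\D(G)^{\Box 2} = \{0\}$. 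For comparison: the paper offers no proof of this lemma at all, stating that it summarizes well-known properties and referring to \cite[Proposition 8.23]{DL} and \cite[Proposition 4.2]{W2} for proofs of more general results. Your argument is the standard direct verification that those references carry out in greater generality, and it relies on exactly the two continuity facts recalled in Section 2.3 of the paper, correctly distinguishing which one is used where; so it is a legitimate self-contained substitute for the citation.
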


We define $\Asn(G)$ to be the subalgebra of $\ell^1(G)^{**}$ generated by invariant differences on subnormal subgroups of $G$, that is
$$\Asn(G) = \spn \{ \Gamma_1 \Box \cdots \Box \Gamma_n : \Gamma_i \in \D(H_i) \ \text{\rm for some } H_i \approxlhd G \ (i = 1,\ldots, n) \}.$$
Similarly, we define $\Jsn(G)$ to be the left ideal of $\ell^1(G)^{**}$ given by
$$\Jsn(G) = \spn \{ \Phi \Box \Gamma : \Phi \in \ell^1(G)^{**}, \ \Gamma \in \D(H) \ \text{\rm for some } H \approxlhd G \}.$$
The algebra $\Asn(G)$ will be our main object of study, but in some cases our results extend to $\Jsn(G)$. 

It is unclear to us whether or not $\Asn(G)$ and $\Jsn(G)$ are closed in either the norm or weak*-topology on $\ell^1(G)^{**}$. However, for the Banach algebraists we note that, since the index of nilpotence of a subalgebra of $\ell^1(G)^{**}$ passes to both its norm- and weak*-closure, all of our theorems about nilpotence of $\Asn(G)$ and $\Jsn(G)$ remain true if they are replaced by their closures in either topology.

We begin by looking at how invariant differences behave under conjugation by a group element.

\begin{lemma}		\label{2.2}
	Let $G$ be a group and let $H \leq G$. For any invariant difference $\Gamma$ on $H$ and any $t \in G$, the functional $\Gamma^t$ is an invariant difference on $H^t$.
\end{lemma}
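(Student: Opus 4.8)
The plan is to verify the two defining properties of an invariant difference on $H^t$ directly, using the definition $\Gamma^t = \delta_t \Box \Gamma \Box \delta_{t^{-1}}$ and the fact that conjugation by a fixed group element is an isometric algebra automorphism of $\ell^1(G)^{**}$ for the first Arens product. First I would record that the map $\Phi \mapsto \Phi^t = \delta_t \Box \Phi \Box \delta_{t^{-1}}$ is an algebra automorphism of $(\ell^1(G)^{**}, \Box)$: indeed it is the second conjugate of the automorphism $f \mapsto \delta_t * f * \delta_{t^{-1}}$ of $\ell^1(G)$ (equivalently, it is the composition of right multiplication by $\delta_{t^{-1}}$ — which is weak*-continuous — with left multiplication by $\delta_t$, and one checks multiplicativity on $\ell^1(G)$ and extends by weak*-density using weak*-continuity of these particular one-sided multiplications). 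In particular $\delta_s^{\, t} = \delta_{sts^{-1}}$... more precisely $(\delta_u)^t = \delta_t \Box \delta_u \Box \delta_{t^{-1}} = \delta_{tut^{-1}} = \delta_{u^t}$ for $u \in G$.

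Next I would check left-invariance of $\Gamma^t$ under $H^t$. Take $s \in H$, so that $s^t = tst^{-1}$ is a typical element of $H^t$. Then
\[
\delta_{s^t} \Box \Gamma^t = \delta_{tst^{-1}} \Box \delta_t \Box \Gamma \Box \delta_{t^{-1}} = \delta_t \Box \delta_s \Box \Gamma \Box \delta_{t^{-1}} = \delta_t \Box \Gamma \Box \delta_{t^{-1}} = \Gamma^t,
\]
where the third equality uses that $\Gamma$ is $H$-left-invariant. Since every element of $H^t$ has the form $s^t$ for some $s \in H$, this shows $\Gamma^t$ is $H^t$-left-invariant.

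Then I would check $\langle \Gamma^t, 1 \rangle = 0$. The cleanest way is to observe that $1 \in \ell^\infty(G)$ is fixed by the dual action of conjugation (the function $1$ is translation-invariant), so $\langle \Gamma^t, 1 \rangle = \langle \Gamma, 1 \rangle = 0$; concretely, $\langle \delta_t \Box \Phi \Box \delta_{t^{-1}}, 1\rangle = \langle \Phi, \delta_{t^{-1}} \cdot 1 \cdot \delta_t \rangle$ and one unwinds the Arens-module actions to see $\delta_{t^{-1}} \cdot 1 \cdot \delta_t = 1$. Alternatively, since $\Phi \mapsto \langle \Phi, 1\rangle$ is a character on $\ell^1(G)^{**}$ (it is weak*-continuous and multiplicative, being the canonical extension of the augmentation character on $\ell^1(G)$) and $\langle \delta_t, 1\rangle = 1$, we get $\langle \Gamma^t, 1\rangle = \langle \delta_t,1\rangle \langle \Gamma, 1\rangle \langle \delta_{t^{-1}},1\rangle = 0$. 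Finally I would note that $\Gamma^t \in \ell^1(G)^{**}$ is automatic, and that it is in fact supported on $H^t$ since $\Gamma$ is supported on $H$ and conjugation carries $\ell^1(H)^{**}$ onto $\ell^1(H^t)^{**}$ — this last point is not strictly needed for the statement as written but is worth one sentence. The argument involves no real obstacle; the only point requiring a moment's care is justifying that $\Phi \mapsto \Phi^t$ is multiplicative for $\Box$ despite the failure of joint weak*-continuity, which is handled by the asymmetric continuity (right multiplication by $\delta_{t^{-1}}$ and the fact that $\delta_t$ is in $\ell^1(G)$ itself) rather than by any delicate limiting argument.
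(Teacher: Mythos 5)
Your proposal is correct and takes essentially the same route as the paper: establish that $\Gamma^t$ is supported on $H^t$ (conjugation by $\delta_t$ carries $\ell^1(H)^{**}$ onto $\ell^1(H^t)^{**}$), verify $H^t$-left-invariance by the direct point-mass computation $\delta_{tst^{-1}} \Box \Gamma^t = \delta_t \Box \delta_s \Box \Gamma \Box \delta_{t^{-1}} = \Gamma^t$, and check the vanishing pairing with the constant function (the paper pairs with $\chi_{H^t}$, you use $1$ together with the support fact or the augmentation character, which is equivalent). One small correction: the support statement is not optional but is part of what ``invariant difference on $H^t$'' means (such a functional must lie in $\ell^1(H^t)^{**}$), so the sentence you describe as not strictly needed should be retained as a genuine step of the proof, as in the paper.
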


\begin{proof}
	Note that, since conjugation by $\delta_t$ is a weak*-homeomorphism of $\ell^1(G)^{**}$, we have
	$$(\ell^1(H)^{**})^t = \left(\overline{\ell^1(H)}^{\, w^*} \right)^t 
	= \overline{\ell^1(H)^t}^{\, w^*} = \ell^1(H^t)^{**},$$
	and it follows that $\Gamma^t$ is supported on $H^t$. To see invariance, take $s \in H$, and $\lambda \in \ell^1(H^t)^*$. Then
	$$\langle \delta_{tst^{-1}} \Box \Gamma^t, \lambda \rangle = 
	\langle \delta_t \Box (\delta_s \Box \Gamma)\Box \delta_{t^{-1}}, \lambda \rangle = 
	\langle \delta_s \Box \Gamma, \delta_t \cdot \lambda \cdot \delta_{t^{-1}} \rangle
	= \langle \Gamma, \delta_t \cdot \lambda \cdot \delta_{t^{-1}} \rangle = \langle \Gamma^t, \lambda \rangle.$$
	Finally, $\langle \Gamma^t, \chi_{H^t} \rangle = \langle \Gamma, \chi_H \rangle = 0$.
\end{proof}

We now turn to look at the coset factors of invariant differences. Our next lemma can be summarized as saying that left invariance is preserved by right decompositions, whereas the ``difference property'' $\langle \Gamma, 1 \rangle = 0$ is preserved by left decompositions, and in the case that the subgroup is normal both properties are preserved by either decomposition.

\begin{lemma}		\label{0.1a}
	Let $G$ be a group, and let $H$ be a finite index subgroup of $G$, with $n = [G : H]$. Let $\Lambda \in \ell^1(G)^{**}$.
	\begin{enumerate}
		\item[\rm (i)] Suppose that $\Lambda$ is $G$-left-invariant. If $t_1, \ldots, t_n$ is a right transversal for $H$ in $G$, and 
		$\Lambda = \sum_{i=1}^n \Phi_i \Box \delta_{t_i}$
		is the right coset decomposition of $\Lambda$ with respect to $t_1, \ldots, t_n$, then each $\Phi_i$ is $H$-left-invariant.
		\item[\rm (ii)] Suppose that $\Lambda$ is an invariant difference. If $s_1, \ldots, s_n$ is a left transversal, 
		and 
		$\Lambda = \sum_{i = 1}^n \delta_{s_i} \Box \Psi_i,$
		is the left coset decomposition of $\Lambda$ with respect to $s_1, \ldots, s_n$,
		then for each $i = 1 \ldots, n$ we have $\langle \Psi_i, 1 \rangle = \langle \Psi_i, \chi_H \rangle = 0$.
		\item[\rm (iii)] Suppose now that $H$ is a normal subgroup, and that $\Lambda$ is an invariant difference on $G$. 
		Then each of the coset factors $\Phi_i$ and $\Psi_i$ is an invariant difference on $H$.
	\end{enumerate}
\end{lemma}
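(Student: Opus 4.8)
The plan is to prove (i) and (ii) by combining the uniqueness of coset decompositions (Lemma~\ref{0.1}) with one short computation in the Arens product, and then to deduce (iii) from these using the normality of $H$. The computational ingredient is this: for $\Phi \in \ell^1(H)^{**}$, $t \in G$ and $\lambda \in \ell^\infty(G)$, unwinding the three defining formulae for $\Box$ gives $\langle \delta_t \Box \Phi, \lambda \rangle = \langle \Phi, \lambda_t \rangle$ and $\langle \Phi \Box \delta_t, \lambda \rangle = \langle \Phi, {}_t\lambda \rangle$, where $\lambda_t, {}_t\lambda \in \ell^\infty(H)$ are the functions $h \mapsto \lambda(th)$ and $h \mapsto \lambda(ht)$. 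Since $\chi_{tH}(th) = 1$ and $\chi_{Ht}(ht) = 1$ for all $h \in H$, this yields $\langle \delta_t \Box \Phi, \chi_{tH} \rangle = \langle \Phi \Box \delta_t, \chi_{Ht} \rangle = \langle \Phi, 1 \rangle$, while pairing against the indicator of a different coset gives $0$. Consequently, since the coset factors of $\Lambda$ are supported on mutually disjoint cosets of $H$, each factor's mass $\langle \Phi_i, 1\rangle$ or $\langle \Psi_i, 1\rangle$ can be recovered as a single pairing of $\Lambda$ against a coset indicator (and, for $\Phi_i, \Psi_i$ supported on $H$, these pairings against the constant $1$ coincide with the pairings against $\chi_H$, so matching the paper's definition of an invariant difference on $H$ is automatic).

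For (i), fix $h \in H$. Left multiplication by $\delta_h$ is weak*-continuous and maps $\ell^1(H)$ into itself, so each $\delta_h \Box \Phi_i$ lies in $\ell^1(H)^{**}$; hence $\sum_{i=1}^n (\delta_h \Box \Phi_i) \Box \delta_{t_i} = \delta_h \Box \Lambda = \Lambda$ is again a right coset decomposition of $\Lambda$ with respect to $t_1,\dots,t_n$, and uniqueness forces $\delta_h \Box \Phi_i = \Phi_i$ for each $i$; as $h$ was arbitrary, each $\Phi_i$ is $H$-left-invariant. For (ii), note first that $\langle \Psi_i, 1\rangle = \langle \Psi_i, \chi_H\rangle$ because $\Psi_i$ is supported on $H$, so the two claimed vanishings coincide, and by the first paragraph $\langle \Psi_i, 1\rangle = \langle \delta_{s_i}\Box\Psi_i, \chi_{s_iH}\rangle = \langle \Lambda, \chi_{s_iH}\rangle$. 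To see this is $0$, unwind $\delta_s \Box \Lambda = \Lambda$ as above to get $\langle \Lambda, \lambda\rangle = \langle \Lambda, (g\mapsto\lambda(sg))\rangle$ for all $s \in G$; with $\lambda = \chi_{aH}$ the right-hand side becomes $\langle \Lambda, \chi_{s^{-1}aH}\rangle$, so $\langle \Lambda, \chi_{bH}\rangle$ does not depend on the coset $bH$. Summing over a transversal, $0 = \langle \Lambda, 1\rangle = [G:H]\langle \Lambda, \chi_{s_iH}\rangle$, so $\langle \Psi_i, 1\rangle = 0$.

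For (iii), assume $H \lhd G$. Each $\Phi_i$ is $H$-left-invariant by (i), and pairing the right coset decomposition against $\chi_{Ht_i}$ (a coset of $H$, since $H$ is normal) gives $\langle \Phi_i, 1\rangle = \langle \Lambda, \chi_{Ht_i}\rangle = 0$ by the coset-constancy established above; so each $\Phi_i$ is an invariant difference on $H$. Each $\Psi_i$ satisfies $\langle \Psi_i, 1\rangle = 0$ by (ii); for left-invariance, given $h \in H$ write $hs_i = s_i(s_i^{-1}hs_i)$ with $s_i^{-1}hs_i \in H$, so that $\delta_h\Box\Lambda = \sum_{i=1}^n \delta_{s_i}\Box(\delta_{s_i^{-1}hs_i}\Box\Psi_i)$ is again a left coset decomposition of $\Lambda$; uniqueness gives $\delta_{s_i^{-1}hs_i}\Box\Psi_i = \Psi_i$, and since $h\mapsto s_i^{-1}hs_i$ is a bijection of $H$ we conclude $\delta_h\Box\Psi_i = \Psi_i$ for all $h \in H$. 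Hence each $\Psi_i$ is an invariant difference on $H$.

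I expect no real obstacle here beyond bookkeeping in the definition of $\Box$; the only point demanding care is the asymmetry built into the statement, namely that left-invariance is not inherited by left coset factors and the difference condition is not inherited by right coset factors, so (iii) genuinely uses normality twice and in two different guises (via the equality of left and right cosets for the $\Phi_i$, and via the conjugation-stability of $H$ inside the transversal for the $\Psi_i$). Isolating the two ``translate of a coset indicator restricts to the constant $1$'' identities at the outset is what makes the rest mechanical.
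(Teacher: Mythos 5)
Your proof is correct and follows essentially the same route as the paper: uniqueness of the coset decompositions from Lemma~\ref{0.1} combined with left-invariance, disjointness of coset supports, and conjugation by transversal elements under normality. The only (cosmetic) difference is in (iii), where you verify $\langle \Phi_i, 1\rangle = 0$ by pairing directly against $\chi_{Ht_i}$ and using coset-constancy of $\langle \Lambda, \chi_{bH}\rangle$, and prove invariance of the $\Psi_i$ by a direct uniqueness argument, whereas the paper reduces both claims to parts (i) and (ii) applied to the conjugated factors $\Phi_i^{t_i^{-1}}$ and $\Psi_i^{s_i}$.
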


\begin{proof}
	(i) Note that for each $u \in H$ we have $\Lambda = \delta_u \Box \Lambda = \sum_{j = 1}^n \delta_u \Box \Phi_i \Box \delta_{t_i}$, which forces $\delta_u \Box \Phi_i = \Phi \ (i=1, \ldots, n)$ by the uniqueness of the decomposition, and the fact that
	$\supp (\delta_u \Box \Phi_i)$ is contained in $H \ (i=1, \ldots, n)$.
	
	(ii) First note that, for $i = 1, \ldots, n$, we have
	$$\langle \Lambda, \chi_H \rangle = \langle \delta_{s_i^{-1}} \Box \Lambda, \chi_H \rangle  = \langle \Psi_i, \chi_H \rangle 
	= \langle \Psi_i, 1 \rangle,$$
	where for the second equality we have used the fact that $H \cap \supp(\delta_{s_i^{-1}s_j} \Box \Psi_i) = \emptyset$, for $j \neq i$. Since $0 = \langle \Lambda, 1 \rangle = \sum_{i = 1}^n \langle \delta_{s_i} \Box \Psi_i, 1 \rangle = \sum_{i=1}^n \langle \Psi_i, 1 \rangle$, and the summands are all equal, it follows that $\langle \Psi_i, 1 \rangle = 0  \ (i = 1, \ldots, n)$.

	(iii) First consider the right coset factors $\Phi_i$. Since $H$ is normal, the right transversal $t_1, \ldots, t_n$ is also a left transversal. Clearly we have
	$\Lambda = \sum_{i=1}^n \delta_{t_i} \Box \Phi_i^{t_i^{-1}}$. By the uniqueness of the left coset decompositon, we may apply (ii) to this formula to see that, for each $i = 1, \ldots, n$, we have $ 0 = \langle \Phi_i^{t_i^{-1}}, 1 \rangle = \langle \Phi_i, t_i \cdot 1 \cdot t_i^{-1} \rangle = \langle \Phi_i, 1 \rangle.$ By (i) we have left invariance, so that each $\Phi_i$ is an invariant difference.
	
	As for the left coset factors, by uniqueness of the decomposition, $\Psi_i^{s_i} \ (i = 1, \ldots, n)$ are the left coset factors with respect to $\{ s_1, \ldots, s_n \}$, so that each $\Psi_i^{s_i}$ is 
	$H$-left invariant by (i). Fix $i \in \{ 1, \ldots, n \}$. For any $t \in H$, we have $\delta_t \Box \Psi_i^{s_i} = \Psi_i^{s_i}$ which implies that $\delta_{s_i^{-1}ts_i} \Box \Psi_i = \Psi_i$. Since $H$ is normal and $t\in H$ was arbitrary, this implies that $\Psi_i$ is $H$-left-invariant. 
	We have $\langle \Psi_i, 1 \rangle = 0$ by (ii).
\end{proof}

\begin{corollary}		\label{0.1b}
	Let $G$ be a group, let $H$ be a finite index subnormal subgroup of $G$, and write $n = [G : H]$. There exists a left transveral 
	$t_1, \ldots, t_n$ for $H$ in $G$ with the property that, for any invariant difference $\Gamma$ on $G$, with right coset decomposition $\Gamma = \sum_{i=1}^n \Phi_i \Box \delta_{t_i}$ with respect to $t_1, \ldots, t_n$, each of the coset factors $\Phi_i \ (i = 1, \ldots, n)$ is an invariant difference on $H$.
\end{corollary}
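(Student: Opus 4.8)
The plan is to deduce the subnormal (in fact merely finite-index) case from the normal case already settled in Lemma~\ref{0.1a}(iii) by passing to the normal core. First I would set $N := \bigcap_{g \in G} gHg^{-1}$, the normal core of $H$ in $G$; since $[G:H] < \infty$ this is a finite-index normal subgroup of $G$ with $N \le H$. Then I fix a left transversal $x_1, \dots, x_q$ for $N$ in $H$ (so $q = [H:N]$) and a transversal $t_1, \dots, t_n$ for $H$ in $G$ that is \emph{simultaneously a left and a right transversal}; such a common transversal exists for any finite-index subgroup (for instance, lift one from the finite group $G/N$, whose existence there is a standard consequence of Hall's marriage theorem), and it will be the left transversal required in the statement. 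The combinatorial heart of the argument is the observation that the $nq$ products $x_p t_i$ then form a transversal for $N$ in $G$: using $N \lhd G$ and the transversal properties of $\{x_p\}$ and $\{t_i\}$,
\[
\bigsqcup_{p,i} x_p t_i N \;=\; \bigsqcup_{p,i} x_p N t_i \;=\; \bigsqcup_{i} \Bigl( \bigsqcup_p x_p N \Bigr) t_i \;=\; \bigsqcup_i H t_i \;=\; G ,
\]
and since $N$ is normal this left transversal is a right transversal for $N$ in $G$ as well.

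Now let $\Gamma$ be an invariant difference on $G$. Applying Lemma~\ref{0.1a}(iii) to the normal subgroup $N$ with the right transversal $\{x_p t_i\}$, write $\Gamma = \sum_{p,i} \Omega_{p,i} \Box \delta_{x_p t_i}$ with each $\Omega_{p,i} \in \D(N)$. Since $\delta_{x_p t_i} = \delta_{x_p} \Box \delta_{t_i}$, regrouping gives $\Gamma = \sum_{i=1}^n \Phi_i \Box \delta_{t_i}$ with $\Phi_i := \sum_{p=1}^q \Omega_{p,i} \Box \delta_{x_p}$; as $N \le H$ and $x_p \in H$, each summand lies in $\ell^1(N)^{**} \Box \delta_{x_p} = \ell^1(Nx_p)^{**} \subseteq \ell^1(H)^{**}$, so $\Phi_i \in \ell^1(H)^{**}$, and since $\{t_i\}$ is a right transversal for $H$ in $G$ this is precisely the right coset decomposition of $\Gamma$ with respect to $t_1, \dots, t_n$. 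It remains to check $\Phi_i \in \D(H)$. Left-invariance is immediate from Lemma~\ref{0.1a}(i), because $\Gamma$ is $G$-left-invariant and $\{t_i\}$ is a right transversal for $H$ in $G$. For $\langle \Phi_i, 1 \rangle = 0$ I would use that $\delta_g \cdot 1 = 1$ in $\ell^\infty(G)$ for every $g \in G$ (a one-line check from the Arens formulae: $\langle \delta_h, \delta_g \cdot 1 \rangle = \langle \delta_h \delta_g, 1 \rangle = 1 = \langle \delta_h, 1 \rangle$ for all $h$), whence $\langle \Omega_{p,i} \Box \delta_{x_p}, 1 \rangle = \langle \Omega_{p,i}, \delta_{x_p} \cdot 1 \rangle = \langle \Omega_{p,i}, 1 \rangle = 0$ using $\Omega_{p,i} \in \D(N)$; summing over $p$ gives $\langle \Phi_i, 1 \rangle = 0$.

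The only genuinely delicate point is the asymmetry between left and right transversals: the conclusion asks for the \emph{right} coset factors relative to a \emph{left} transversal to be invariant differences, which forces one to use a transversal that is simultaneously left and right and, moreover, compatible with $N$ in the sense that $\{x_p t_i\}$ is still a transversal for $N$. This asymmetry is exactly why the more naive strategy of inducting along a subnormal chain $H = H_m \lhd \cdots \lhd H_1 \lhd G$ and multiplying transversals one step at a time does not immediately yield the statement: a step-by-step product of transversals comes out as a left transversal whose well-behaved coset factors are the \emph{left} ones, not the right ones. Once the correct transversal is in place, the regrouping and the verification of the two defining properties of an invariant difference are routine. (I note in passing that subnormality of $H$ is not actually needed in this argument — finite index suffices — the subnormal hypothesis presumably reflecting the role of $\Asn(G)$ elsewhere in the paper.)
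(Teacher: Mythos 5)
Your proof is correct, but it takes a genuinely different route from the paper's. The paper argues by induction along a subnormal chain $H=H_m\lhd\cdots\lhd H_1\lhd G$: it multiplies together right transversals for each $H_i$ in $H_{i-1}$ to get a right transversal $T_i$ for $H_i$ in $G$, and at each step applies Lemma \ref{0.1a}(iii) (with $H_i\lhd H_{i-1}$) to the coset factors obtained at the previous step. You instead pass to the normal core $N$ of $H$, apply Lemma \ref{0.1a}(iii) once (to $N\lhd G$, with the transversal $\{x_pt_i\}$, which is indeed a two-sided transversal for $N$), and then regroup; your verification of left-invariance via Lemma \ref{0.1a}(i) and of $\langle\Phi_i,1\rangle=0$ via $\delta_{x_p}\cdot 1=1$ is sound. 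What your argument buys: subnormality of $H$ is never used (finite index suffices), and your Hall-type common transversal is literally a left transversal as the statement demands, as well as the right transversal needed for the right coset decomposition to make sense. What the paper's argument buys: it needs no common-transversal existence result and stays entirely inside the chain machinery, producing a \emph{right} transversal --- which is evidently what is intended, since the decomposition used is the right coset one and Proposition \ref{3.3} later invokes the corollary by choosing right transversals; the word ``left'' in the statement is best read as a slip for ``right''. One point in your closing remarks is off the mark: the ``naive'' chain induction with right transversals is not blocked by the left/right asymmetry you describe, because Lemma \ref{0.1a}(iii) controls the \emph{right} coset factors as well as the left ones whenever the subgroup in question is normal in the group being decomposed, which is exactly the situation at each step of the chain; this is precisely the paper's proof, and the only residual left/right issue is the wording of the statement itself, which your common transversal happens to repair.
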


\begin{proof}
	Since $H$ is subnormal, we can write $H= H_m \lhd H_{m-1} \lhd \cdots \lhd H_1 \lhd H_0 = G$, for some $m \in \N$, and some
	subgroups $H_1, \ldots, H_{m-1}$. For each $i = 1, \ldots, m$, let $r_i = [H_i : H_{i-1}],$ and let $t_{i,1}, \ldots, t_{i, r_i}$ be a right transversal for 
	$H_i$ in $H_{i-1}$. Then for each $i$ the set 
	$$T_i : = \{ t_{i,k_i}t_{i-1, k_{i-1}} \cdots t_{1,k_1} : k_j =1, \ldots, r_j, \ j= 1, \ldots, i \}$$
	is a right transversal for $H_i$ in $G$. It now follows by induction on $i$ that the coset factors with respect to each $T_i$ belong to $\D(H_i)$: for the induction step apply Lemma \ref{0.1a}(iii) to the coset factors with respect to $T_{i-1}$.
	When $i = m$ we obtain the result.
\end{proof}

The following lemmas shall help us to establish properties of products of functionals and invariant differences.

\begin{lemma}		\label{0.4b}
	Let $G$ be a group, and let $H \leq G$, and let $\Phi \in \ell^1(G)^{**}$ be $H$-left-invariant. Then 
	\begin{enumerate}
		\item[\rm (i)] for $\Lambda \in \ell^1(H)^{**}$ we have $\Lambda \Box \Phi = \langle \Lambda, \chi_H \rangle \Phi = \langle \Lambda, 1 \rangle \Phi$;
		\item[\rm (ii)] if $H\lhd G$ then $\Psi \Box \Phi$ is $H$-left-invariant for every $\Psi \in \ell^1(G)^{**}$.
	\end{enumerate}
\end{lemma}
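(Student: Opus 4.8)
The plan is to treat the two parts separately; in both cases everything reduces to the invariance identity $\delta_t \Box \Phi = \Phi$ $(t \in H)$, the elementary formula $\delta_t \Box \delta_s \Box \Phi = \delta_{ts} \Box \Phi$ for point masses, and the basic continuity properties of the first Arens product recalled in Section 2.3.

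For (i) I would compute $\langle \Lambda \Box \Phi, \lambda \rangle$ for an arbitrary $\lambda \in \ell^1(G)^*$ using the three-stage Arens formula. Applying the functional $\delta_t$ $(t \in H)$ to both sides of $\delta_t \Box \Phi = \Phi$ and using $\langle \delta_t \Box \Phi, \lambda \rangle = \langle \delta_t, \Phi \cdot \lambda \rangle = (\Phi \cdot \lambda)(t)$ shows that the function $\Phi \cdot \lambda \in \ell^\infty(G)$ is constant on $H$ with value $\langle \Phi, \lambda \rangle$; in particular it agrees on $H$ both with $\langle \Phi, \lambda \rangle \chi_H$ and with the constant function $\langle \Phi, \lambda \rangle 1$. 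Since $\Lambda$ is supported on $H$, its pairing with a function in $\ell^\infty(G)$ depends only on that function's restriction to $H$, so
$$\langle \Lambda \Box \Phi, \lambda \rangle = \langle \Lambda, \Phi \cdot \lambda \rangle = \langle \Phi, \lambda \rangle \langle \Lambda, \chi_H \rangle = \langle \Phi, \lambda \rangle \langle \Lambda, 1 \rangle.$$
As $\lambda$ was arbitrary this yields $\Lambda \Box \Phi = \langle \Lambda, \chi_H \rangle \Phi = \langle \Lambda, 1 \rangle \Phi$.

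For (ii) the key observation is that, for $s \in G$ and $t \in H$, normality of $H$ gives $s^{-1}ts \in H$, so
$$\delta_t \Box \delta_s \Box \Phi = \delta_{ts} \Box \Phi = \delta_s \Box \delta_{s^{-1}ts} \Box \Phi = \delta_s \Box \Phi,$$
the last step by $H$-left-invariance of $\Phi$. By bilinearity, $\delta_t \Box a \Box \Phi = a \Box \Phi$ for every finitely supported $a \in \ell^1(G)$. To pass to a general $\Psi \in \ell^1(G)^{**}$, I would take a net $(a_\alpha)$ of finitely supported elements of $\ell^1(G)$ converging weak* to $\Psi$ and use that $\Xi \mapsto \Xi \Box \Phi$ is weak*-continuous (right multiplication by a fixed element) and that $\Xi \mapsto \delta_t \Box \Xi$ is weak*-continuous, the latter because $\langle \delta_t \Box \Xi, \lambda \rangle = \langle \Xi, \lambda \cdot \delta_t \rangle$. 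Hence $\Xi \mapsto \delta_t \Box \Xi \Box \Phi$ is weak*-continuous and $\delta_t \Box \Psi \Box \Phi = \lim_\alpha \delta_t \Box a_\alpha \Box \Phi = \lim_\alpha a_\alpha \Box \Phi = \Psi \Box \Phi$, so $\Psi \Box \Phi$ is $H$-left-invariant.

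There is no serious obstacle here: both parts are short once the Arens formula is unwound. The only points deserving a line of care are the identity $\langle \Lambda, \chi_H \rangle = \langle \Lambda, 1 \rangle$ for $\Lambda$ supported on $H$ (valid since $\chi_H$ and $1$ have the same restriction to $H$), and, in (ii), observing that for the first Arens product left multiplication by an element of $\ell^1(G)$ — as opposed to by a general element of the bidual — is weak*-continuous.
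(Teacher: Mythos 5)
Your proof is correct. Part (ii) is essentially the paper's own argument: the same conjugation identity $\delta_t \Box \delta_s \Box \Phi = \delta_s \Box \delta_{s^{-1}ts} \Box \Phi = \delta_s \Box \Phi$ for $t \in H$, $s \in G$, followed by linearity and a weak*-limit; you additionally spell out the continuity facts that the paper leaves implicit (weak*-continuity of $\Xi \mapsto \Xi \Box \Phi$, and of $\Xi \mapsto \delta_t \Box \Xi$ because $\langle \delta_t \Box \Xi, \lambda \rangle = \langle \Xi, \lambda \cdot \delta_t \rangle$), which is the right point to be careful about. For (i) your route differs mildly: the paper checks the identity on point masses $\delta_t$ with $t \in H$ and extends by linear combinations and weak*-limits, again via weak*-continuity of right multiplication by the fixed element $\Phi$, whereas you unwind the three-stage Arens formula directly, noting that $H$-left-invariance forces $\Phi \cdot \lambda$ to be constant on $H$ with value $\langle \Phi, \lambda \rangle$ and that a functional supported on $H$ only sees restrictions to $H$. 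The content is the same --- your constancy claim is precisely what the point-mass check verifies --- but your version avoids the limiting step in (i) and makes explicit why $\langle \Lambda, \chi_H \rangle = \langle \Lambda, 1 \rangle$ for $\Lambda$ supported on $H$.
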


\begin{proof}
	(i) This clearly holds when $\Lambda$ is a point mass, and the general case follows by taking weak*-limits of linear combinations of point masses.
	
	(ii) Let $u \in G$ and $s \in H$. Then
	$$\delta_s \Box \delta_u \Box \Phi = \delta_u \Box \delta_{u^{-1}su} \Box \Phi = \delta_u \Box \Phi.$$
	Hence $\delta_u \Box \Phi$ is $H$-left-invariant, and by taking linear combinations and weak*-limits of point masses we see that $\Psi \Box \Phi$ is $H$-left-invariant for all $\Psi \in \ell^1(G)^{**}$, as required.
\end{proof}

\begin{lemma}		\label{2.1}
Let $G$ be a group, and let $H, K \leq G$, and suppose that $H \cap K $ is finite index in $H$. Let $\Gamma$ be an invariant difference on $H$, and let $\Psi \in \ell^1(G)^{**}$ be $K$-left-invariant. Then $\Gamma \Box \Psi = 0$.
\end{lemma}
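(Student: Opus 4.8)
The plan is to reduce the claim to Lemmas~\ref{0.1}, \ref{0.1a}, and \ref{0.4b} by decomposing $\Gamma$ over the cosets of $L := H \cap K$ inside $H$. By hypothesis $L$ has finite index in $H$; put $n = [H:L]$ and fix a left transversal $s_1, \dots, s_n$ for $L$ in $H$. Viewing $H$ as the ambient group, the left-transversal form of Lemma~\ref{0.1} lets us write the left coset decomposition
$$\Gamma = \sum_{i=1}^n \delta_{s_i} \Box \Psi_i, \qquad \Psi_i \in \ell^1(L)^{**}.$$

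The key input is that each $\Psi_i$ has total mass zero. Since $\Gamma$ is an invariant difference on $H$, Lemma~\ref{0.1a}(ii) — applied with $H$ in the role of the large group and $L$ in the role of the finite-index subgroup — gives $\langle \Psi_i, 1 \rangle = 0$ for every $i$. On the other hand, since $L \leq K$ and $\Psi$ is $K$-left-invariant, $\Psi$ is in particular $L$-left-invariant, so Lemma~\ref{0.4b}(i) applies to each $\Psi_i \in \ell^1(L)^{**}$ and yields $\Psi_i \Box \Psi = \langle \Psi_i, 1 \rangle\, \Psi = 0$. Using associativity of $\Box$ we conclude
$$\Gamma \Box \Psi = \sum_{i=1}^n \delta_{s_i} \Box (\Psi_i \Box \Psi) = 0,$$
as required.

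Every step is a direct application of a previously established lemma, so I do not anticipate a genuine obstacle; the only thing to watch is the bookkeeping in the ``change of ambient group'', namely that Lemmas~\ref{0.1} and~\ref{0.1a}(ii) are being used with $H$ (not $G$) as the surrounding group and $L = H\cap K$ as the finite-index subgroup, together with the observation that $K$-left-invariance of $\Psi$ passes to $L$-left-invariance so that Lemma~\ref{0.4b}(i) is legitimately applicable. It is also worth noting where the hypothesis that $H \cap K$ is finite index in $H$ enters: it is precisely what makes the coset decomposition of $\Gamma$ a finite sum, so that the termwise computation of $\Gamma \Box \Psi$ is valid.
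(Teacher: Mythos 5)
Your proposal is correct and is essentially the paper's own proof: the paper likewise takes the left coset decomposition of $\Gamma$ over $H\cap K$ in $H$, uses Lemma~\ref{0.1a}(ii) to see that each coset factor has $\langle \,\cdot\,, 1\rangle = 0$, and then applies Lemma~\ref{0.4b}(i) (with $\Psi$ being $(H\cap K)$-left-invariant) to kill each term. Your extra remarks on the change of ambient group and on why the finite-index hypothesis is needed are just explicit versions of what the paper leaves implicit.
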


\begin{proof}
Let $n= [H : H \cap K] < \infty$, and let $t_1, \ldots, t_n$ be a left transversal for $H \cap K$ in $H$. Let 
$$\Gamma = \delta_{t_1}\Box \Phi_1 + \cdots + \delta_{t_n} \Box \Phi_n$$
be the left coset decomposition of $\Gamma$. Applying Lemma \ref{0.1a}(ii) and Lemma \ref{0.4b}(i), we see that 
$$\Gamma \Box \Psi = \sum_{i=1}^n \delta_{t_i} \Box \Phi_i \Box \Psi
= \sum_{i = 1}^n \langle \Phi_i, 1 \rangle \delta_{t_i} \Box \Psi = 0,$$
as required.
\end{proof}

In particular, the previous lemma implies that, if $\Gamma_H$ and $\Gamma_K$ are invariant differences on subgroups $H$ and $K$ respectively, then $\Gamma_H \Box \Gamma_K = 0$ whenever $[H : H \cap K]$ is finite. The next lemma gives us a method to find invariant differences with non-zero products. Specifically, it implies that non-zero invariant differences on distinct direct factors of a groups have non-zero product. 

\begin{lemma} 	\label{0.4}
	Let $G$ be  a  group, and let $H$ be a subgroup of $G$. Suppose that $H$ may be written as
	$$H = L_1 \times \cdots \times L_k,$$
	for some $k \in \N$, and some subgroups $L_1, \ldots, L_k$ of $G$. For each  $i = 1, \ldots, k,$ let $\Psi_i \in \ell^1(G)^{**} \setminus \{ 0 \}$ be supported on $L_i$. Then 
	$$\Psi_1 \Box \cdots \Box \Psi_k \neq 0.$$
\end{lemma}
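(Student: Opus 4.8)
The plan is to reduce to the case $k=2$ by induction, and then exploit the fact that the two factors $L_1$ and $L_2$ commute with each other inside $H$. For the inductive step, suppose $\Psi_1 \Box \cdots \Box \Psi_{k-1} \neq 0$; we may regard this product as a non-zero element supported on the subgroup $L_1 \times \cdots \times L_{k-1}$, which commutes elementwise with $L_k$, so it suffices to treat $k = 2$. Thus let $H = L_1 \times L_2$, let $\Psi_1$ be supported on $L_1$ and $\Psi_2$ on $L_2$, both non-zero, and we must show $\Psi_1 \Box \Psi_2 \neq 0$.

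The key idea is to pair $\Psi_1 \Box \Psi_2$ against a cleverly chosen functional $\lambda \in \ell^1(G)^*$ built as a "tensor". Choose $\mu_1 \in \ell^1(L_1)^*$ with $\langle \Psi_1, \mu_1 \rangle \neq 0$ (possible since $\Psi_1 \neq 0$), and likewise $\mu_2 \in \ell^1(L_2)^*$ with $\langle \Psi_2, \mu_2 \rangle \neq 0$. Since every element of $H$ factors uniquely as a product of an element of $L_1$ and an element of $L_2$, the function $\lambda$ on $G$ defined by $\lambda(st) = \mu_1(s)\mu_2(t)$ for $s \in L_1$, $t \in L_2$, and $\lambda = 0$ off $H$, is a well-defined bounded function, hence lies in $\ell^1(G)^* = \ell^\infty(G)$. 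First I would compute, using the weak*-density of point masses and the elementwise commutation of $L_1$ and $L_2$, that for $s \in L_1$ and $t \in L_2$ one has $\langle \delta_s \Box \delta_t, \lambda \rangle = \lambda(st) = \mu_1(s)\mu_2(t)$, and more generally trace through the definition of the Arens product to show $\langle \Psi_1 \Box \Psi_2, \lambda \rangle = \langle \Psi_1, \mu_1 \rangle \langle \Psi_2, \mu_2 \rangle$. The cleanest way to organise this is to note that since $\Psi_2$ is supported on $L_2$, the function $\Psi_2 \cdot \lambda$ (defined by $\langle a, \Psi_2 \cdot \lambda \rangle = \langle \Psi_2, \lambda \cdot a\rangle$) restricted to $L_1$ equals $\langle \Psi_2, \mu_2\rangle \mu_1$; this is a computation with point masses extended by weak*-continuity of the relevant maps.

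The main obstacle I anticipate is the bookkeeping in verifying $\langle \Psi_1 \Box \Psi_2, \lambda \rangle = \langle \Psi_1, \mu_1\rangle\langle \Psi_2, \mu_2\rangle$ rigorously rather than formally: one must be careful about which multiplications are weak*-continuous. The right-hand multiplication $\Phi \mapsto \Phi \Box \delta_t$ is weak*-continuous, and $\Phi \mapsto \lambda \cdot \Phi$-type operations need checking, so the argument should be arranged so that limits are only taken in positions where continuity holds — concretely, fix $\Psi_2$, approximate $\Psi_1$ by a net of finitely-supported measures $(a_\alpha)$ on $L_1$, use weak*-continuity of right multiplication by $\Psi_2$ to get $\Psi_1 \Box \Psi_2 = \lim_\alpha a_\alpha \Box \Psi_2$, then for each fixed finitely-supported $a_\alpha = \sum_j c_j \delta_{s_j}$ compute $\langle \delta_{s_j} \Box \Psi_2, \lambda\rangle$ by unwinding $\langle \Psi_2, \lambda \cdot \delta_{s_j}\rangle$ and using that $\lambda \cdot \delta_{s_j}$ restricted to $L_2$ is $\mu_1(s_j)\mu_2$ (again because $s_j \in L_1$ commutes with $L_2$ and the product $s_j t$ lands in the coset with the right $L_1$-component). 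Once these identities are in place, $\langle \Psi_1 \Box \Psi_2, \lambda\rangle = \langle \Psi_1, \mu_1\rangle\langle \Psi_2, \mu_2\rangle \neq 0$, so $\Psi_1 \Box \Psi_2 \neq 0$, completing the base case and hence the induction.
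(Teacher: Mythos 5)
Your proposal is correct and takes essentially the same approach as the paper: the paper also tests the product against a tensor functional $y_1\otimes\cdots\otimes y_k$, extended by zero off $H$, and verifies the multiplicativity formula $\langle \Phi_1\Box\cdots\Box\Phi_k, y\rangle=\prod_i\langle\Phi_i,y_i\rangle$ on point masses, then on $\ell^1$ elements, then by weak*-limits. The only difference is organizational — you reduce to $k=2$ by induction (using that products of elements supported on $L_1\times\cdots\times L_{k-1}$ remain supported there) instead of treating all $k$ factors at once — which amounts to the same iterated weak*-limit computation.
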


\begin{proof}
	For each $i = 1, \ldots, k$ choose $y_i \in \ell^\infty(L_i)$ satisfying $\langle \Psi_i, y_i \rangle \neq 0$. 
	Define $y \in \ell^\infty(G)$ by $y(v) = 0$ if $v \notin H$, and otherwise by
	$$y((u_1, \ldots, u_k)) = y_1(u_1) \cdots y_k(u_k) \quad (u_i \in L_i, \ i=1, \ldots, k).$$
	We clearly  have
	$$\langle \delta_{u_1}* \cdots *\delta_{u_k}, y \rangle = \langle \delta_{u_1}, y_1 \rangle \cdots \langle \delta_{u_k}, y_k \rangle$$
	for $u_i \in L_i \ (i=1, \ldots, k)$, which implies that
	$$\langle f_1* \cdots *f_k, y \rangle = \langle f_1, y_1 \rangle \cdots \langle f_k, y_k \rangle$$
	for $f_i \in \ell^1(L_i) \ (i = 1, \ldots, k)$. After taking weak*-limits, this implies that 
	$$\langle \Phi_1 \Box \cdots \Box \Phi_k, y \rangle = \langle \Phi_1, y_1 \rangle \cdots \langle \Phi_k, y_k \rangle$$
	for $\Phi_i \in \ell^1(L_i)^{**} \  (i = 1, \ldots, k)$. Hence
	$$\langle \Psi_1 \Box \cdots \Box \Psi_k, y \rangle = \langle \Psi_1, y_1 \rangle \cdots \langle \Psi_k, y_k \rangle \neq 0.$$
	In particular $\Psi_1 \Box \cdots \Box \Psi_k \neq 0,$ as required.
\end{proof}

\section{Nilpotence of the algebras $\Asn(G)$ and $\Jsn(G)$}
\noindent
In this section we prove Theorem \ref{00.1}, as well some results about the nilpotence of $\Jsn(G)$.
We have been unable to prove the analogue of Theorem \ref{00.1} for $\Jsn(G)$ in general. However in Theorem \ref{2.3} and Theorem \ref{3.5b} we are able to determine the index of nilpotence of $\Jsn(G)$ for hereditarily just infinite groups and virtually abelian groups respectively.

\begin{theorem}		\label{2.3}
	Let $G$ be an amenable, just infinite group. Then the following are equivalent:
	\begin{enumerate}
		\item[\rm (1)] $G$ is hereditarily just infinite;
		\item[\rm (2)] $\Jsn(G)^{\Box 2} = \{ 0 \}$;
		\item[\rm (3)] $\Asn(G)^{\Box 2} = \{ 0 \}$.
	\end{enumerate}
\end{theorem}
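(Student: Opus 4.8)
The plan is to prove the cycle of implications $(1)\Rightarrow(2)\Rightarrow(3)\Rightarrow(1)$. The implication $(2)\Rightarrow(3)$ is immediate since $\Asn(G)\subseteq\Jsn(G)$ as algebras (every generator $\Gamma_1\Box\cdots\Box\Gamma_n$ of $\Asn(G)$ with $n\geq 1$ is of the form $\Phi\Box\Gamma$ with $\Phi=\Gamma_1\Box\cdots\Box\Gamma_{n-1}$ and $\Gamma=\Gamma_n\in\D(H_n)$, so it lies in $\Jsn(G)$), and hence $\Asn(G)^{\Box 2}\subseteq\Jsn(G)^{\Box 2}$.

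For $(1)\Rightarrow(2)$: assume $G$ is hereditarily just infinite, so by \cite[Proposition 4]{Wi71} every subnormal subgroup of $G$ has finite index. A typical product of two generators of $\Jsn(G)$ has the shape $(\Phi\Box\Gamma_H)\Box(\Psi\Box\Gamma_K)$ with $H,K\approxlhd G$, $\Gamma_H\in\D(H)$, $\Gamma_K\in\D(K)$, and $\Phi,\Psi\in\ell^1(G)^{**}$. By Lemma \ref{2.0}, $\Phi\Box\Gamma_H = \langle\Phi,1\rangle\Gamma_H$ is again an invariant difference on $H$ (or zero), so it suffices to show $\Gamma_H\Box\Lambda = 0$ whenever $\Gamma_H\in\D(H)$, $H\approxlhd G$, and $\Lambda\in\Jsn(G)$; and writing $\Lambda$ as a span of elements $\Psi\Box\Gamma_K = \langle\Psi,1\rangle\Gamma_K$, it reduces to showing $\Gamma_H\Box\Gamma_K=0$ for all $H,K\approxlhd G$. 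Now here is the key point: since $G$ is hereditarily just infinite, both $H$ and $K$ have finite index in $G$, hence $H\cap K$ has finite index in $G$, hence $[H:H\cap K]<\infty$. Since $\Gamma_K$ is in particular $K$-left-invariant (indeed $(H\cap K)$-left-invariant), Lemma \ref{2.1} applied with the roles $H\rightsquigarrow H$, $K\rightsquigarrow K$, $\Psi\rightsquigarrow\Gamma_K$ gives $\Gamma_H\Box\Gamma_K=0$. Taking spans, $\Jsn(G)^{\Box 2}=\{0\}$.

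For the contrapositive of $(3)\Rightarrow(1)$: assume $G$ is just infinite but \emph{not} hereditarily just infinite. By Theorem \ref{00.4}, $G$ is then in case (1) (a branch group) or case (3) (has a finite-index normal subgroup $H = H_1\times\cdots\times H_n$ with $n\geq 2$ and the $H_i$ mutually isomorphic, infinite, hereditarily just infinite with trivial Baer radical) --- case (2), virtually abelian, is hereditarily just infinite only when $\rank = 1$, and we should check that a virtually abelian just infinite group that is not h.j.i. actually has $\rank\geq 2$, which puts a suitable $\Z^2$-like subgroup in play and lets us run the same factor argument; alternatively Lemma \ref{5.4} shows a non-h.j.i.\ just infinite group cannot have $\rank=1$, so a finite-index copy of $\Z^n$ with $n\geq 2$ appears, which decomposes as a direct product of $n\geq 2$ copies of $\Z$. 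In all remaining cases we obtain a subnormal (indeed finite-index normal, hence subnormal) subgroup of $G$ that is a direct product $L^{(1)}\times\cdots\times L^{(k)}$ of $k\geq 2$ isomorphic infinite amenable factors. Each infinite amenable factor $L^{(j)}$ carries a non-zero invariant difference $\Gamma_j\in\D(L^{(j)})\subseteq\D$ of a subnormal subgroup of $G$. By Lemma \ref{0.4}, $\Gamma_1\Box\Gamma_2\neq 0$ (take $k=2$ in that lemma, or use the full product). But $\Gamma_1\Box\Gamma_2\in\Asn(G)^{\Box 2}$, so $\Asn(G)^{\Box 2}\neq\{0\}$, completing the contrapositive. (In the branch case the factors $L_1^{(j)}$ of $H_1$ serve directly.)

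The main obstacle I expect is the bookkeeping around case (2) of Theorem \ref{00.4}: one must verify that a just infinite virtually abelian group which is not hereditarily just infinite genuinely has abelian rank at least $2$ --- equivalently, that $\rank=1$ forces hereditary just infiniteness --- so that the direct-product-of-two-copies-of-$\Z$ mechanism is available; Lemma \ref{5.4} together with the classification of rank-one cases ($\Z$ and $D_\infty$ are h.j.i., and any virtually-$\Z$ just infinite group is commensurable with one of these) should close this gap. A minor subtlety in $(1)\Rightarrow(2)$ is ensuring that when we replace $\Phi\Box\Gamma_H$ by $\langle\Phi,1\rangle\Gamma_H$ we are entitled to use Lemma \ref{2.0} with $\Gamma_H$ viewed inside $\ell^1(G)^{**}$ rather than $\ell^1(H)^{**}$ --- but $\D(H)$ is a weak*-closed ideal of $\ell^1(H)^{**}$ and the identity $\Phi\Box\Gamma = \langle\Phi,1\rangle\Gamma$ for $\Gamma\in\D(H)$, $\Phi\in\ell^1(G)^{**}$ follows from $H$-left-invariance of $\Gamma$ together with $\langle\Gamma,\chi_H\rangle=0$ exactly as in Lemma \ref{0.4b}(i) and the proof of Lemma \ref{2.0}.
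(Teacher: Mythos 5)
Your directions (2)$\Rightarrow$(3) and (3)$\Rightarrow$(1) are fine and essentially match the paper (the paper likewise gets (3)$\Rightarrow$(1) from Lemma \ref{5.4} and Theorem \ref{00.4}, producing a subnormal subgroup $H_1\times H_2$ with both factors infinite and applying Lemma \ref{0.4}; your extra care over the virtually abelian case is exactly the intended use of Lemma \ref{5.4}). The problem is in (1)$\Rightarrow$(2): the identity you rely on, $\Phi\Box\Gamma_H=\langle\Phi,1\rangle\Gamma_H$ for $\Gamma_H\in\D(H)$ with $H$ a \emph{proper} subgroup and $\Phi\in\ell^1(G)^{**}$ arbitrary, is false, and the "minor subtlety" paragraph does not repair it. Lemma \ref{2.0} needs invariance under all of $G$, and Lemma \ref{0.4b}(i) needs the \emph{left} factor to be supported on $H$. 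Concretely, take $\Phi=\delta_t$ with $t\notin H$: then $\delta_t\Box\Gamma_H$ is supported on the coset $tH$, disjoint from $H$, so it cannot equal $\langle\delta_t,1\rangle\Gamma_H=\Gamma_H$ unless $\Gamma_H=0$. Consequently your reduction of $(\Phi\Box\Gamma_H)\Box(\Psi\Box\Gamma_K)$ to $\langle\Phi,1\rangle\langle\Psi,1\rangle\,\Gamma_H\Box\Gamma_K$ collapses precisely the part of the statement that carries the content: handling the arbitrary element $\Psi$ sitting between the two invariant differences. (Your computation $\Gamma_H\Box\Gamma_K=0$ via Lemma \ref{2.1} is correct, but it does not by itself give $\Jsn(G)^{\Box 2}=\{0\}$.)

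The paper closes exactly this gap as follows: write the left coset decomposition $\Psi=\sum_i\delta_{t_i}\Box\Psi_i$ with respect to the finite index subgroup $K$, so each $\Psi_i$ is supported on $K$ and Lemma \ref{0.4b}(i) legitimately gives $\Psi_i\Box\Gamma_K=\langle\Psi_i,1\rangle\Gamma_K$; then $\Gamma_H\Box\delta_{t_i}\Box\Gamma_K=\delta_{t_i}\Box\Gamma_H^{t_i^{-1}}\Box\Gamma_K$, and since $\Gamma_H^{t_i^{-1}}\in\D(H^{t_i^{-1}})$ (Lemma \ref{2.2}) with $[H^{t_i^{-1}}:H^{t_i^{-1}}\cap K]<\infty$, Lemma \ref{2.1} kills each term, whence $\Phi\Box\Gamma_H\Box\Psi\Box\Gamma_K=0$. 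An alternative repair of your own route: let $K_0$ be the normal core of $K$, which still has finite index; $\Gamma_K$ is $K_0$-left-invariant, so Lemma \ref{0.4b}(ii) shows $\Psi\Box\Gamma_K$ is $K_0$-left-invariant for every $\Psi$, and then Lemma \ref{2.1} applied to $\Gamma_H$ and the $K_0$-left-invariant element $\Psi\Box\Gamma_K$ gives the vanishing directly. Either way, some genuine argument about $\Psi$ is needed; the one-line appeal to Lemma \ref{2.0} is not valid.
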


\begin{proof}
	Suppose that $G$ is hereditarily just infinite. By \cite[Proposition 4]{Wi71}, every subnormal subgroup of $G$ has finite index. Let $H, K \approxlhd G$, let $\Gamma_H$ and $\Gamma_K$ be invariant differences on $H$ and $K$ respectively, and let $\Phi, \Psi \in \ell^1(G)^{**}$. We first show that $\Phi \Box \Gamma_H \Box \Psi \Box \Gamma_K = 0$. Let $[G : K] = n$, and pick a left transversal $t_1, \ldots, t_n$ for $K$ in $G$. Write $\Psi = \sum_{i = 1}^n \delta_{t_i} \Box \Psi_i$, for some $\Psi_1, \ldots, \Psi_n$ supported on $K$, and note that $\Psi_i \Box \Gamma_k = \langle \Psi_i, 1 \rangle \Gamma \ (i=1, \ldots, n)$ by Lemma \ref{0.4b}(i). Then, by Lemma \ref{2.1} and Lemma \ref{2.2}, we have
	$$\Gamma_H \Box \Psi \Box \Gamma_K 
	= \Gamma_H \Box \left( \sum_{i = 1}^n \delta_{t_i} \Box \Psi_i \Box \Gamma_K \right)
	=\sum_{i=1}^n \langle \Psi_i, 1 \rangle \delta_{t_i} \Box \Gamma_H^{t_i^{-1}} \Box \Gamma_K = 0,$$
	whence also $\Phi \Box \Gamma_H \Box \Psi \Box \Gamma_K = 0$. Since a general element of $\Jsn(G)$ is a finite sum of elements of the form $\Phi \Box \Gamma_H$, for $H \approxlhd G$, it follows that $\Jsn(G)^{\Box 2} = \{ 0 \}$. We have shown that (1) implies (2). It is trivial that (2) implies (3).
	
	To show that (3) implies (1) we prove the contrapositive. Indeed, Lemma \ref{5.4} and Theorem \ref{00.4} together imply that, if $G$ is not hereditarily just infinite, then $G$ has a subnormal subgroup $H$ such that $H = H_1 \times H_2$, for some infinite $H_1, H_2 \approxlhd G$. Let $\Gamma_i \in \D(H_i) \setminus \{ 0 \} \ (i=1,2)$. Then $\Gamma_1 \Box \Gamma_2 \neq 0$ by Lemma \ref{0.4}.
\end{proof}

Next we prove that the stronger version of Theorem \ref{00.1} with $\Jsn(G)$ in place of $\Asn(G)$ holds when $G$ is any (not necessarily just infinite) virtually abelian group. In what follows we define the \textit{rank} of a finitely-generated virtually abelian group $G$, denoted by $\rank G$, to be the rank of any finite index abelian subgroup.

\begin{lemma}		\label{3.5a}
	Let $A = \Z^n$ for some $n \in \N$, and let $K_1, \ldots, K_{n+1} \leq A$. Let $\Gamma_i \in \D(K_i)$ and $\Phi_i \in \ell^1(A)^{**} \ (i = 1, \ldots, n+1)$.
	Then 
	$$\Phi_1 \Box \Gamma_1 \Box \cdots \Box \Phi_{n+1} \Box \Gamma_{n+1} = 0.$$
\end{lemma}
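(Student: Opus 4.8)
The plan is to run from right to left through the product, showing that one of the ``difference'' factors $\Gamma_j$ already annihilates everything standing to its right. For $j = 1, \dots, n+1$ write
$Z_j := \Phi_j \Box \Gamma_j \Box \Phi_{j+1} \Box \Gamma_{j+1} \Box \cdots \Box \Phi_{n+1}\Box\Gamma_{n+1}$,
so that $Z_1$ is the element to be shown zero and $Z_j = \Phi_j\Box\Gamma_j\Box Z_{j+1}$ for $j \le n$. First I would dispose of the degenerate case: if some $K_i$ is finite then $\Gamma_i \in \D(K_i) = \{0\}$ and $Z_1 = 0$; so we may assume each $K_i$ is infinite, hence of rank $r_i := \rank K_i \ge 1$.

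The first substantive step is to pin down the invariance of the tails $Z_j$. Since $A$ is abelian, every subgroup is normal, so Lemma~\ref{0.4b}(ii) applies without restriction. By downward induction on $j$ one checks that $Z_j$ is $K_m$-left-invariant for every $m \ge j$: the base case $Z_{n+1} = \Phi_{n+1}\Box\Gamma_{n+1}$ is $K_{n+1}$-left-invariant by Lemma~\ref{0.4b}(ii); for the inductive step, Lemma~\ref{0.4b}(ii) first gives that $\Phi_j\Box\Gamma_j$ is $K_j$-left-invariant, so a left translation by an element of $K_j$ is absorbed by the leftmost block and hence $Z_j$ is $K_j$-left-invariant, while for $m > j$ the $K_m$-invariance of $Z_{j+1}$ is inherited by $Z_j = (\Phi_j\Box\Gamma_j)\Box Z_{j+1}$, again by Lemma~\ref{0.4b}(ii). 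Since $\delta_s\Box\delta_t = \delta_{st}$, it follows that $Z_{j+1}$ is left-invariant under the subgroup $L_j := \langle K_{j+1}, \dots, K_{n+1}\rangle = K_{j+1} + \cdots + K_{n+1}$.

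The crux is then a dimension count that locates a usable index $j$. Set $V_i := K_i \otimes_{\Z} \Q \subseteq \Q^n$ and $W_j := V_j + V_{j+1} + \cdots + V_{n+1}$, giving a descending chain $W_1 \supseteq W_2 \supseteq \cdots \supseteq W_{n+1}$ of subspaces of $\Q^n$ with $\dim W_{n+1} = r_{n+1} \ge 1$. If every inclusion $W_{j+1}\subseteq W_j$ (for $1\le j\le n$) were strict, then $\dim W_1 \ge \dim W_{n+1} + n \ge n+1$, contradicting $W_1 \subseteq \Q^n$. Hence $W_j = W_{j+1}$ for some $j \le n$, i.e.\ $V_j \subseteq W_{j+1} = L_j\otimes\Q$; for subgroups of $\Z^n$ this says precisely that $[K_j : K_j\cap L_j] < \infty$. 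I expect this to be the main obstacle to get right, since the analogous statement for \emph{consecutive} pairs, $[K_j : K_j\cap K_{j+1}]<\infty$, fails in general (e.g.\ three pairwise-distinct lines in $\Q^2$), so one genuinely needs the subgroup generated by \emph{all} later $K_i$, not just $K_{j+1}$.

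Finally, with such a $j$ in hand, Lemma~\ref{2.1} applied with $H = K_j$, $K = L_j$, $\Gamma = \Gamma_j$ and $\Psi = Z_{j+1}$ (which is $L_j$-left-invariant) gives $\Gamma_j\Box Z_{j+1} = 0$. Hence $Z_j = \Phi_j\Box(\Gamma_j\Box Z_{j+1}) = 0$, and therefore $Z_1 = \Phi_1\Box\Gamma_1\Box\cdots\Box\Phi_{j-1}\Box\Gamma_{j-1}\Box Z_j = 0$, which is the assertion. Pleasantly, this argument requires no induction on $n$.
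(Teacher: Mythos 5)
Your proof is correct and is essentially the paper's argument in unrolled form: the paper proves a strengthened statement by induction on $\rank \langle K_1, \ldots, K_{n+1} \rangle$, and each inductive step is exactly your dichotomy (either the rank of the span of the later $K_i$ drops, or the finite-index containment $[K_j : K_j \cap \langle K_{j+1}, \ldots \rangle] < \infty$ holds and Lemma~\ref{0.4b}(ii) plus Lemma~\ref{2.1} annihilate the product). Your pigeonhole on the descending chain of $\Q$-spans $W_1 \supseteq \cdots \supseteq W_{n+1}$ just locates directly the index $j$ at which that induction terminates, so both proofs rest on the same key lemmas and the same rank mechanism.
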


\begin{proof}
	We shall prove the following stronger statement by induction: let $q = \rank \langle K_1, \ldots, K_{n+1} \rangle$. Then $\Phi_1 \Box \Gamma_1 \Box \cdots \Box \Phi_{q+1} \Box \Gamma_{q+1} = 0$.
For the base case $q=0$ just note that the only invariant difference on the trivial group is $0$.

Suppose that $q>0$. There are two cases. Firstly, if $\rank \langle K_2, \ldots, K_{q+1} \rangle \leq q-1$ then $\Phi_2 \Box \Gamma_2 \Box \cdots \Box \Phi_{q+1} \Box \Gamma_{q+1} = 0$ by the induction hypothesis. Secondly, if $\rank \langle K_2, \ldots, K_{q+1} \rangle = q$, then in particular $\rank \langle K_1, K_2, \ldots, K_{q+1} \rangle = \rank \langle K_2, \ldots, K_{q+1} \rangle$, which implies that 
$$[K_1 : K_1 \cap \langle K_2, \ldots, K_{q+1} \rangle] < \infty.$$
By Lemma \ref{0.4b}(ii) $\Phi_2 \Box \Gamma_2 \Box \cdots \Box \Phi_{q+1} \Box \Gamma_{q+1}$ is $\langle K_2, \ldots, K_{q+1} \rangle$-left-invariant. It follows that 
$$\Gamma_1 \Box \Phi_2 \Box \Gamma_2 \Box \cdots \Box \Phi_{q+1} \Box \Gamma_{q+1} = 0$$
by Lemma \ref{2.1}. This completes the proof.
\end{proof}

\begin{theorem}		\label{3.5b}
	Let $G$ be a finitely-generated, virtually abelian group of rank $n$. Then $\Asn(G)$ and $\Jsn(G)$ are nilpotent of index $n+1$.
\end{theorem}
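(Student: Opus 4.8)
The plan is to deduce both the upper and lower bounds on the index of nilpotence from results already in hand, using the fact that $G$ has a finite-index normal abelian subgroup $A \cong \Z^n$, which we may take to be normal since passing to the normal core of a finite-index subgroup keeps it finite-index and of the same rank. For the upper bound, I want to show $\Jsn(G)^{\Box(n+1)} = \{0\}$ (which immediately gives $\Asn(G)^{\Box(n+1)} = \{0\}$, since $\Asn(G) \subseteq \Jsn(G)$). A general element of $\Jsn(G)^{\Box(n+1)}$ is a sum of terms $\Phi_1 \Box \Gamma_1 \Box \cdots \Box \Phi_{n+1} \Box \Gamma_{n+1}$ with each $\Gamma_i \in \D(H_i)$ for some $H_i \approxlhd G$. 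The key reduction is to replace each $\Gamma_i$ by invariant differences supported on subgroups of $A$: since $A$ is finite-index and normal, $H_i \cap A$ is finite-index in $H_i$ and normal in $G$, so by Corollary \ref{0.1b} (applied to $H_i \cap A \approxlhd H_i$, or directly using Lemma \ref{0.1a}(iii) since $H_i \cap A \lhd H_i$) the coset factors of $\Gamma_i$ with respect to a suitable transversal of $H_i \cap A$ in $H_i$ are invariant differences on $H_i \cap A \leq A$. Expanding each $\Gamma_i$ via its right coset decomposition and moving the point-mass factors $\delta_{t}$ to the left past the preceding invariant differences (using Lemma \ref{2.2} to reinterpret $\delta_t^{-1} \Box \Gamma_j \Box \delta_t$ as an invariant difference on a conjugate subgroup of $A$, which is again a subgroup of $A$ as $A$ is normal and abelian), one writes the original product as a sum of terms of the form $(\text{something}) \Box \Gamma_1' \Box \cdots \Box \Gamma_{n+1}'$ with $\Gamma_i' \in \D(K_i)$, $K_i \leq A \cong \Z^n$. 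By Lemma \ref{3.5a} each such term vanishes (absorbing the leading factor into $\Phi_1$ of that lemma). This gives $\Jsn(G)^{\Box(n+1)} = \{0\}$.

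For the lower bound I want to exhibit a nonzero product of $n$ invariant differences on subnormal subgroups, showing $\Asn(G)^{\Box n} \neq \{0\}$. Since $A \cong \Z^n$ is finite-index, it contains a subgroup of the form $m_1\Z \times \cdots \times m_n\Z$; better, it contains the direct product $L_1 \times \cdots \times L_n$ where $L_j$ is the $j$-th coordinate copy of $\Z$ inside $\Z^n$ (or a finite-index refinement thereof), and such a direct-product subgroup $L$ of $A$ is itself finite-index in $A$, hence finite-index in $G$, hence subnormal in $G$; moreover each $L_j \approxlhd G$ since $L_j \lhd L \lhd A \lhd G$. Wait — $L_j$ need not be normal in $A$ in general, but since $A$ is abelian every subgroup of $A$ is normal in $A$, so indeed $L_j \lhd A \lhd G$ gives $L_j \approxlhd G$. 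Each $L_j \cong \Z$ is infinite amenable, so $\D(L_j) \neq \{0\}$; pick $0 \neq \Gamma_j \in \D(L_j)$. Since $L_1, \ldots, L_n$ are distinct direct factors of the subgroup $L \leq G$, Lemma \ref{0.4} gives $\Gamma_1 \Box \cdots \Box \Gamma_n \neq 0$. As each $\Gamma_j$ is an invariant difference on a subnormal subgroup, this product lies in $\Asn(G)^{\Box n} \subseteq \Jsn(G)^{\Box n}$, so neither algebra is nilpotent of index $\leq n$. Combined with the upper bound, both have index of nilpotence exactly $n+1$.

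The main obstacle I expect is the bookkeeping in the upper bound: carefully tracking, through the iterated coset decomposition and the commutation of point masses past invariant differences, that at every stage the invariant differences involved remain supported on subgroups of the fixed abelian group $A$ (using normality of $A$ to ensure conjugates of subgroups of $A$ stay inside $A$), so that Lemma \ref{3.5a} applies at the end. A secondary point to get right is that one genuinely needs $A$ normal for Lemma \ref{0.1a}(iii) / Corollary \ref{0.1b} to hand back invariant differences on $A$ rather than merely left-invariant functionals or functionals with the difference property but not both — taking the normal core of the given finite-index abelian subgroup handles this cleanly, and the rank is unchanged since the normal core is still finite-index abelian.
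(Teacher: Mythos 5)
Your lower bound is fine and matches the paper: invariant differences on the $n$ coordinate factors of a finite-index normal copy of $\Z^n$, combined via Lemma \ref{0.4}. The upper bound, however, has a genuine gap at the point where you invoke Lemma \ref{3.5a}. After decomposing each $\Gamma_i$ over cosets of $K_i = H_i \cap A$ and conjugating the point masses away, what you actually obtain is a sum of terms of the form
$\tilde\Phi_1 \Box \tilde\Gamma_1 \Box \tilde\Phi_2 \Box \tilde\Gamma_2 \Box \cdots \Box \tilde\Phi_{n+1} \Box \tilde\Gamma_{n+1}$,
where the $\tilde\Gamma_i$ are indeed invariant differences on subgroups of $A$, but the $\tilde\Phi_i$ are (translates of) the original \emph{arbitrary} elements of $\ell^1(G)^{**}$ and remain interleaved between the invariant differences. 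They cannot be pushed out of the way or ``absorbed into a single leading factor'': the Arens product is noncommutative and the $\tilde\Phi_i$ are not point masses, so your claimed form $(\text{something}) \Box \Gamma_1' \Box \cdots \Box \Gamma_{n+1}'$ is not what the expansion produces. Lemma \ref{3.5a} as stated requires every interleaved factor to lie in $\ell^1(A)^{**}$, and this hypothesis is not decorative: its proof rests on Lemma \ref{0.4b}(ii), which needs the relevant subgroup $\langle K_2, \ldots, K_{q+1} \rangle$ to be normal in the ambient group carrying the $\Phi$'s. That subgroup is normal in $A$ (as $A$ is abelian) but in general not normal in $G$, so the lemma does not extend for free to $\tilde\Phi_i \in \ell^1(G)^{**}$, and only the first factor's position is genuinely insensitive to being outside $\ell^1(A)^{**}$.

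The missing step is exactly the one the paper takes: decompose each $\Phi_i$ as well, over a right transversal $s_1, \ldots, s_m$ for $A$ in $G$, writing $\Phi_i = \sum_j \Phi_{i,j} \Box \delta_{s_j}$ with $\Phi_{i,j} \in \ell^1(A)^{**}$. Expanding both families of decompositions and passing all point masses to the right, each summand becomes
$\Psi_1^{u_1} \Box \Lambda_1^{v_1} \Box \cdots \Box \Psi_{n+1}^{u_{n+1}} \Box \Lambda_{n+1}^{v_{n+1}} \Box \delta_x$
with $\Psi_i \in \ell^1(A)^{**}$ and $\Lambda_i \in \D(K_i)$; since $A \lhd G$, each $\Psi_i^{u_i}$ again lies in $\ell^1(A)^{**}$ and each $\Lambda_i^{v_i}$ is an invariant difference on a subgroup of $A$ (Lemma \ref{2.2}), so Lemma \ref{3.5a} applies verbatim and kills every summand. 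With that extra coset decomposition of the $\Phi_i$ inserted, your argument becomes the paper's proof; without it, the final application of Lemma \ref{3.5a} is unjustified.
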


\begin{proof}
	Let $A$ be a finite index normal subgroup of $G$ isomorphic to $\Z^n$ say, and let $m = [G:A]$. That $\Asn(G)^{\Box n} \neq \{ 0 \}$ follows by considering non-zero invariant differences on each of the direct factors of $A$ and applying Lemma \ref{0.4}. As such $\Jsn(G)^{\Box n} \neq \{ 0 \}$ also. To complete the proof we shall prove that $\Jsn(G)^{\Box (n+1)} = \{ 0 \}$.
	
	Let $H_1, \ldots, H_{n+1} \approxlhd G$, and let $\Gamma_i$ be an invariant difference on $H_i \ (i = 1, \ldots, n+1)$. Let $\Phi_i \in \ell^1(G)^{**} \ (i = 1, \ldots, n)$ be arbitrary. We claim that $\Phi_1 \Box \Gamma_1 \Box \cdots \Box \Phi_{n+1} \Box \Gamma_{n+1} = 0$.
	
	Set $K_i = H_i \cap A \ (i = 1, \ldots, n+1)$, set $m_i = [H_i: K_i]$, and let
	$$\Gamma_i = \sum_{j=1}^{m_i} \Gamma_{i,j} \Box \delta_{t_{i,j}}$$
	be the right coset decomposition of $\Gamma_i$ in terms of $K_i$, for some right transversal $t_{i,1}, \ldots, t_{i,m_i}$ of $K_i$ in $H_i$. By Lemma \ref{0.1a}(iii) each $\Gamma_{i,j}$ is an invariant difference on $K_i$. Also, let $s_1, \ldots, s_m$ be a right transversal for $A$ in $G$, and let 
	$$\Phi_i = \sum_{j=1}^m \Phi_{i,j} \Box \delta_{s_j}$$
	be the right coset decomposition of $\Phi_i \ (i = 1, \ldots, n+1)$. 
	We have 
	\begin{multline*}
	\Phi_1 \Box \Gamma_1 \Box \cdots \Box \Phi_{n+1} \Box \Gamma_{n+1} = \\
	\sum_{j_1} \cdots \sum_{j_{n+1}} \sum_{l_1} \cdots \sum_{l_{n+1}}
	\Phi_{1, j_1} \Box \delta_{s_{j_1}} \Box \Gamma_{1, l_1} \Box \delta_{t_{1,l_1}} \Box \cdots \\ 
	\cdots \Box \Phi_{n+1, j_{n+1}} \Box \delta_{s_{j_{n+1}}} \Box \Gamma_{n+1, l_{n+1}} \Box \delta_{t_{n+1,l_{n+1}}}.
	\end{multline*}
	By passing the point masses to the right, each summand of the above expression can be rewritten in the form
	$$\Psi_1^{u_1} \Box \Lambda_1^{v_1} \Box \cdots \Box \Psi_{n+1}^{u_{n+1}} \Box \Lambda_{n+1}^{v_{n+1}} \Box \delta_x,$$
	for some $u_1, v_1, \ldots, u_{n+1}, v_{n+1}, x \in G$, some $\Lambda_i \in \D(K_i) \ (i = 1, \ldots, n+1)$, and some $\Psi_1, \ldots, \Psi_{n+1} \in \ell^1(A)^{**}$. Since $A \lhd G$, each $\Psi_i^{u_i} \in \ell^1(A)^{**}$, and each $\Gamma_i^{v_i}$ is an invariant difference on $K_i^{v_i} \leq A$ by Lemma \ref{2.2}. Any product of this form is zero by Lemma \ref{3.5a}.
\end{proof}

We now turn to the more difficult case of a just infinite group as in Condition (3) of Theorem \ref{00.4}. This case corresponds to those just infinite groups $G$ with a finite structure lattice and trivial Baer radical. We begin by proving a couple of lemmas.

\begin{lemma}		\label{3.1}
Let $M = H_1 \times \cdots \times H_n$, for some groups $H_1, \ldots, H_n$. For each $i = 1, \ldots, n$ let $A_i, B_i \leq H_i$ be either finite index or trivial, and suppose that they satisfy 
$$A_i = 1 \Leftrightarrow i > p, \qquad i>p \ \& \ B_i = 1  \Leftrightarrow i > q,$$
for some natural numbers $p \leq q$ (we may always assume this by relabeling the $H_i$). Let
$$\Lambda_1 \in \ell^1(A_1 \times \cdots \times A_n)^{**}, \qquad \Lambda_2 \in \ell^1(B_1 \times \cdots \times B_n)^{**}$$
be left-invariant. Then there exist $m \in \N$, $t_1 \ldots, t_m \in M$, and 
$$\Psi_1, \ldots, \Psi_m \in \ell^1(\Pi_{i=1}^p A_i \times \Pi_{i=p+1}^q B_i)^{**},$$
which are each left-invariant for that subgroup, and which satisfy 
$$\Lambda_1 \Box \Lambda_2 = \Psi_1 \Box \delta_{t_1} + \cdots + \Psi_m \Box \delta_{t_m},$$
when considered as elements of $\ell^1(M)^{**}$.
\end{lemma}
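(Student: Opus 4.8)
The plan is to obtain the required finite sum by performing a single coset decomposition of $\Lambda_2$, arranged so that each resulting piece meets $M' := H_1 \times \cdots \times H_p$ inside the finite-index subgroup $A := A_1 \times \cdots \times A_p$ that $\Lambda_1$ already ``lives on''. First I would set up notation: write $M'' := H_{p+1} \times \cdots \times H_q$, and note that $\Lambda_1 \in \ell^1(A)^{**}$ (the factors $A_i$ with $i>p$ being trivial), while $\Lambda_2 \in \ell^1(B^{(1)} \times B^{(2)})^{**}$, where $B^{(1)} := B_1 \times \cdots \times B_p \leq M'$ and $B^{(2)} := B_{p+1} \times \cdots \times B_q \leq M''$ has finite index in $M''$. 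The target subgroup is then $N := \Pi_{i=1}^p A_i \times \Pi_{i=p+1}^q B_i = A \times B^{(2)}$, a genuine direct-product subgroup of $M$ since $A \leq M'$ and $B^{(2)} \leq M''$. Because $A$ has finite index in $M'$, the intersection $A \cap B^{(1)}$ has finite index in $B^{(1)}$, so $H := (A \cap B^{(1)}) \times B^{(2)}$ has finite index in $B^{(1)} \times B^{(2)}$, and moreover $A \leq N$ and $H \leq N$.

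Next I would apply Lemma \ref{0.1}(ii) to write $\Lambda_2 = \sum_{j=1}^m \Theta_j \Box \delta_{t_j}$ along a right transversal $t_1, \ldots, t_m$ for $H$ in $B^{(1)} \times B^{(2)}$ (so each $t_j \in M$), with $\Theta_j \in \ell^1(H)^{**}$; since $\Lambda_2$ is left-invariant, Lemma \ref{0.1a}(i) gives that each $\Theta_j$ is $H$-left-invariant. Putting $\Psi_j := \Lambda_1 \Box \Theta_j$, associativity and distributivity of $\Box$ yield $\Lambda_1 \Box \Lambda_2 = \sum_{j=1}^m \Psi_j \Box \delta_{t_j}$, so it only remains to check that each $\Psi_j$ is supported on $N$ and is $N$-left-invariant. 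Support is immediate: $\Lambda_1, \Theta_j \in \ell^1(N)^{**}$, which is a subalgebra of $\ell^1(M)^{**}$, hence so is $\Psi_j$. For left-invariance, take $x \in N$ and write $x = ab$ with $a \in A \leq M'$ and $b \in B^{(2)} \leq M''$, so $\delta_x = \delta_a \Box \delta_b$; then I would compute $\delta_x \Box \Psi_j = \delta_a \Box \delta_b \Box \Lambda_1 \Box \Theta_j = \delta_a \Box \Lambda_1 \Box \delta_b \Box \Theta_j = \Lambda_1 \Box \delta_b \Box \Theta_j = \Lambda_1 \Box \Theta_j = \Psi_j$, where the third equality uses $A$-left-invariance of $\Lambda_1$ and the fourth uses that $\Theta_j$ is $B^{(2)}$-left-invariant (as $B^{(2)} \leq H$).

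The one small fact I would isolate and prove first is the commutation used in the second equality above, namely $\delta_b \Box \Lambda_1 = \Lambda_1 \Box \delta_b$ for $b \in B^{(2)} \leq M''$: since $b$ centralises $M'$, conjugation by $b$ fixes every element of $\ell^1(M')$, and as conjugation is a weak*-homeomorphism of $\ell^1(M)^{**}$ (as in the proof of Lemma \ref{2.2}) it therefore fixes $\Lambda_1 \in \ell^1(M')^{**}$, i.e.\ $\delta_b \Box \Lambda_1 \Box \delta_{b^{-1}} = \Lambda_1$. I expect this commutation lemma, together with the bookkeeping needed to align the relabelling hypothesis on the $A_i, B_i$ with the factorisation $M = M' \times M'' \times (H_{q+1} \times \cdots \times H_n)$, to be the only genuinely fiddly points; everything else is a direct invocation of the coset-decomposition lemmas already established.
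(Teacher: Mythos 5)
Your proposal is correct and follows essentially the same route as the paper: decompose $\Lambda_2$ along a right transversal of the finite-index subgroup $(A \cap B^{(1)}) \times B^{(2)}$ of its support group (this is the paper's $Q$, up to discarding trivial factors), use Lemma \ref{0.1a}(i) to see the coset factors are left-invariant there, and take $\Psi_j = \Lambda_1 \Box \Theta_j$. The only cosmetic difference is in verifying $B^{(2)}$-left-invariance of the products: you prove the commutation $\delta_b \Box \Lambda_1 = \Lambda_1 \Box \delta_b$ directly from the fact that $M''$ centralises $M'$, whereas the paper invokes Lemma \ref{0.4b}(ii) with $R = \Pi_{i=p+1}^{q} B_i$ normal in $P$; both arguments are sound.
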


\begin{proof}
	Empty products in this proof can be taken to be the trivial group. Relabeling again if necessary, we may assume that there exists $r \leq p$ such that 
	$$A_i \neq 1 \ \& \ B_i \neq 1 \Leftrightarrow i \leq r.$$
	Define subgroups $P$, $Q$, and $R$ by
	$$P = \Pi_{i=1}^p A_i \times \Pi_{i=p+1}^q B_i,$$
	$$Q = \Pi_{i=1}^r A_i \cap B_i \times \Pi_{i=p+1}^q B_i,$$
	$$R = \Pi_{i=p+1}^q B_i,$$
	and note that $Q$ is a finite index subgroup of $\Pi_{i = 1}^n B_i$. 
	Let $m = [\Pi_{i=1}^n B_i : Q]$ and let $t_1, \ldots, t_m$ be a right transversal for $Q$ in $\Pi_{i=1}^n B_i$. Let
	$$\Lambda_2 = \Phi_1 \Box \delta_{t_1} + \cdots + \Phi_m \Box \delta_{t_m}$$
	be the corresponding right coset decomposition of $\Lambda_2$, and note that each $\Phi_i \ (i = 1, \ldots, m)$ is $Q$-left-invariant by Lemma \ref{0.1a}, and so also $R$-left-invariant. 
	For each $i \in \{ 1, \ldots, m \}$, both $\Phi_i$ and $\Lambda_1$ are supported on $P$, and $R \lhd P$, so by Lemma \ref{0.4b}(ii) $\Lambda_1 \Box \Phi_i$ is $R$-left-invariant as well. It is also clearly $\Pi_{i=1}^p A_i$-left-invariant, whence it is $P$-left-invariant.  The result now follows by taking $\Psi_i = \Lambda_1 \Box \Phi_i \ (i= 1, \ldots, m)$.
\end{proof}

In the next lemma we shall make use of \cite[Proposition 7]{Wi71}, which states that, given a group $G$ as in (3) of Theorem \ref{00.4}, every subnormal subgroup $K$ of $G$ is commensurable with $H_{i_1} \times \cdots \times H_{i_r},$ for some $r \in \{ 1, \ldots, n \}$ and $i_1, \ldots, i_r \in \{ 1, \ldots, n \}$ distinct; moreover the proof of that proposition shows us that $K$ is commensurable with $K\cap H_{i_1} \times \cdots \times K\cap H_{i_r}$. We shall also use the fact that $H_i \cap K$ is either trivial or finite index in $H_i$, for each $i =1, \ldots, n$; this follows from the fact that each $H_i$ is hereditarily just infinite, and hence all of its subnormal subgroups are either finite index or trivial by \cite[Proposition 4]{Wi71}.

\begin{lemma}		\label{3.2}
	Let $G$ be a just infinite group with trivial Baer radical and finite structure lattice. Let $n \in \N$ and $H_1, \ldots, H_n$ be
	as in Theorem \ref{00.4}(3), and write $M = H_1 \times \cdots \times H_n$. Let $K_1, \ldots, K_{n+1} \approxlhd G$, 
	and let $\Gamma_i \in \D(\Pi_{j=1}^n K_i\cap H_j) \ (i =1, \ldots, n+1)$. Then $\Gamma_1 \Box \cdots \Box \Gamma_{n+1} = 0$.
\end{lemma}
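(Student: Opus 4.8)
The strategy is to run the same kind of rank-reduction induction as in Lemma \ref{3.5a}, but using the structure-lattice machinery of Wilson in place of the rank of a subgroup of $\Z^n$, and using Lemma \ref{3.1} to keep the running product in a controlled form. The "dimension" here is not $\rank$ but something like the number of distinct factors $H_j$ with which a subnormal subgroup $K$ has infinite (equivalently, finite-index) intersection. More precisely, to each subnormal $K$ attach the set $S(K) = \{ j : K \cap H_j \text{ is finite index in } H_j \}$, so that by \cite[Proposition 7]{Wi71} $K$ is commensurable with $\Pi_{j \in S(K)} K \cap H_j$, and hence $\D(\Pi_{j=1}^n K\cap H_j) = \D(\Pi_{j\in S(K)} K\cap H_j)$. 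The key numerical fact is that there are only $n$ factors, so one cannot have $n+1$ subnormal subgroups whose associated factor-sets are "independent" in the relevant sense.

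**Key steps.** First I would reduce to understanding products $\Lambda_1 \Box \cdots \Box \Lambda_{n+1}$ where each $\Lambda_i$ is left-invariant and supported on $\Pi_{j=1}^n A_i^{(j)}$ with each $A_i^{(j)} \leq H_j$ finite index or trivial; Lemma \ref{0.4b}(i) lets us absorb the "difference" condition $\langle \Gamma_i, 1\rangle = 0$ into the claim that the relevant $A$'s must be proper, i.e.\ the product vanishes unless each step genuinely enlarges the supporting subgroup of $M$. Next, I would set up an induction on $n$ (the number of direct factors). For the inductive step, look at $\Gamma_2 \Box \cdots \Box \Gamma_{n+1}$: applying Lemma \ref{3.1} repeatedly (pairing up consecutive factors, tracking how the supporting set of factors grows), this product is a finite sum of terms $\Psi \Box \delta_t$ with $\Psi$ left-invariant and supported on $\Pi_{j \in T} B^{(j)}$ for some set $T \subseteq \{1,\dots,n\}$ of factors, where $B^{(j)} \leq H_j$ is finite index. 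If $|T| \leq n-1$ we can invoke the induction hypothesis on the sub-product of the relevant $\Gamma$'s (restricted to those factors); if $|T| = n$, then $T = \{1,\dots,n\}$, so $\Psi$ is supported on a finite-index subgroup of $M$ and hence, since $\Gamma_1 \in \D(\Pi_j K_1 \cap H_j)$ with $K_1 \cap H_j$ finite index or trivial in $H_j$, the intersection of the support of $\Gamma_1$ with the support of $\Psi$ is finite index in the support of $\Gamma_1$, so Lemma \ref{2.1} forces $\Gamma_1 \Box \Psi \Box \delta_t = 0$. (One has to be careful that $\Gamma_1$'s supporting subgroup is itself nontrivial, but if it's trivial then $\Gamma_1 = 0$ and there is nothing to prove; this is the $q = 0$ base case of the Lemma \ref{3.5a}-style induction reincarnated here.)

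**Main obstacle.** The delicate point is the bookkeeping in the inductive step: one must argue that after collapsing $\Gamma_2 \Box \cdots \Box \Gamma_{n+1}$ via Lemma \ref{3.1} the supporting set $T$ of each resulting term is a union of supporting sets $S(K_i)$ of $n$ subnormal subgroups — and that if $1 \in T$ as well one gets, after premultiplying by $\Gamma_1$, a product that sits inside the subgroup $M' = \Pi_{j \neq j_0} H_j$ for some $j_0$, allowing the induction on $n$ to close, whereas if $1 \notin T$ one is in the Lemma \ref{2.1} situation. In other words, the real content is a pigeonhole-style claim: you cannot keep strictly enlarging the supporting set of factors $n+1$ times inside a product of $n$ factors, and each multiplication by an invariant difference that fails to strictly enlarge kills the product. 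I expect the hardest part to be phrasing the inductive hypothesis so that Lemma \ref{3.1}'s output (a finite sum of $\Psi_i \Box \delta_{t_i}$, not a single invariant element) meshes cleanly with a downward induction on $n$ applied to subproducts; handling the point masses $\delta_{t_i}$ correctly — they can be passed to the right and conjugated into the invariant factors using $M \lhd G$ and Lemma \ref{2.2}, exactly as in the proof of Theorem \ref{3.5b} — is routine but must be done carefully.
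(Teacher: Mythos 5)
Your proposal takes essentially the same route as the paper: the paper proves a strengthened statement by induction on the number of factors $H_i$ met nontrivially by the $K_j$'s, iterates Lemma \ref{3.1} (passing the point masses to the right and using that conjugation by elements of $M$ preserves the factor decomposition) to collapse $\Gamma_2 \Box \cdots \Box \Gamma_{q+1}$ into a finite sum of terms $\Psi \Box \delta_t$ with $\Psi$ left-invariant on a product of finite-index pieces of the factors involved, and then kills each term with $\Gamma_1$ via Lemma \ref{2.1} — exactly your pigeonhole dichotomy. The only adjustment needed is to phrase your induction on the number of factors actually involved (the paper's ``stronger statement'') rather than literally on $n$, which is fixed by $G$; with that rewording your outline matches the paper's argument.
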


\begin{proof}
	We shall prove the following stronger statement by 
	induction on $q$: given $q+1$ subnormal subgroups $K_1, \ldots, K_{q+1}$ such that $\vert \{ i : H_i \cap K_j \neq 1 \text{ for some } j \} \vert \leq q$, and invariant differences $\Gamma_i$ on $\Pi_{j=1}^q K_i \cap H_j \ (i = 1, \ldots, q+1)$, we have $\Gamma_1 \Box \cdots \Box \Gamma_{q+1} = 0.$
	If $q=1$ the result follows from Lemma \ref{2.1}. 
	
	Suppose instead that $q>1$. Suppose further that  $\vert \{ i : H_i \cap K_j \neq 1 \text{ for some } j \} \vert < q$. Then by the induction hypothesis applied to $K_2, \ldots, K_{q+1}$ we have $\Gamma_2 \Box \cdots \Box \Gamma_{q+1} = 0.$ Hence we may suppose that
	$q =  \vert \{ i : H_i \cap K_j \neq 1 \text{ for some } j \} \vert$. 
	Relabelling if necessary, we may suppose that there exists a sequence of natural numbers $p_2 \leq p_3 \leq \cdots \leq p_{q+1}$ satisfying
	$$K_2 \cap H_i = 1 \Leftrightarrow i>p_2 \quad \text{ and } \quad 
	i>p_{j-1} \ \& \	K_j \cap H_i = 1 \Leftrightarrow i>p_j 
	\ (j = 3, \ldots, q+1).$$
	We claim that for each $r \in \{ 2, \ldots, q+1 \}$ we can write
	\begin{equation}		\label{eq3.1}
	\Gamma_2 \Box \cdots \Box \Gamma_r = \sum_{i=1}^{m_r} \Phi_{r, i} \Box \delta_{t_{r,i}},
	\end{equation}
	for some $m_r \in \N$, some $t_{r,1}, \ldots, t_{r, m_r} \in M$, and some elements $\Phi_{r, i} \in \ell^1(G)^{**} \ (i = 1, \ldots, m_r)$ which are supported on, and left-invariant with respect to, subgroups of the form 
	\begin{equation}	\label{eq3.3}
	\Pi_{j=1}^{p_2} (K_2^{s_2} \cap H_j) \times \cdots \times \Pi_{j=p_{r-1}+1}^{p_r} ( K_r^{s_r} \cap H_j),
	\end{equation}
	for some group elements $s_2, \ldots, s_r \in M$, which may depend on $r$ and $i$. Indeed the case $r =2$ is trivial and we proceed by induction on $r$.
	Suppose that \eqref{eq3.3} holds for $r \geq 2$. Then 
	\begin{equation}	\label{eq3.4}
	\Gamma_2 \Box \cdots \Box \Gamma_{r+1} = \left( \sum_{i=1}^{m_r} \Phi_{r, i} \Box \delta_{t_{r,i}} \right)\Box \Gamma_{r+1}
	= \sum_{i=1}^{m_r} \Phi_{r, i} \Box \Gamma_{r+1}^{t_{r, i}} \Box \delta_{t_{r,i}}.
	\end{equation}
	Note that for each $i$ and $j$ we have $K_r^{t_{r, j}} \cap H_i = 1$ if and only if $K_r \cap H_i = 1$, since conjugation by elements of $M$ preserves the direct product decomposition. Hence we may apply Lemma \ref{3.1} to each product $\Phi_{r, i} \Box \Gamma_{r+1}^{t_{r, i}}$, with 
	$$ A_l = 
	\begin{cases}
		K_k^{s_k} \cap H_l & \text{ for }  k = 1, \ldots, r, \text{ and } l = p_{k-1}+1, \ldots, p_k, \\
		1 & \text{ for }l > p_r,
	\end{cases}
	$$
	and $B_l = K_{r+1}^{t_{r,i}} \cap H_l$ for $l = 1, \ldots, n$, where $s_1, \ldots, s_r$ are as in \eqref{eq3.3}. This allows us to write $\Phi_{r, i} \Box \Gamma_{r+1}^{t_{r, i}}$ as a finite sum
	$ \sum_j \Psi_{i,j} \Box \delta_{u_{i,j}} $
	for some $u_{i,1}, u_{i,2}, \ldots \in M$, and some left-invariant elements 
	$$\Psi_{i,1}, \Psi_{i,2}, \ldots \in \ell^1(\Pi_{i=1}^{p_2} (K_2^{s_2} \cap H_i) \times \cdots 
	\times \Pi_{i=p_{r-1}+1}^{p_r} ( K_r^{s_r} \cap H_i) \times \Pi_{i=p_r+1}^{p_{r+1}} (K_{r+1}^{t_{r, i}} \cap H_i))^{**}.$$
	Putting this back into \eqref{eq3.4} gives
	$$\Gamma_2 \Box \cdots \Box \Gamma_{r+1} = \sum_{i, j} \Psi_{i,j} \Box \delta_{u_{i,j}t_{r,i}},$$
	which is a sum of the required form.
	
	We can now complete the proof. Consider \eqref{eq3.1} for $r=q+1$. We note that, for any $i \in \{1, \ldots, m_{q+1}\}$, the functional $\Phi_{q+1, i}$ is left invariant on a 
	finite index subgroup of the group $H_1 \times \cdots \times H_{q+1}$. Since $\Gamma_1$ is also supported on the latter subgroup,
	Lemma \ref{2.1} implies that $\Gamma_1 \Box \Phi_{q+1, i} = 0.$ It now follows from \eqref{eq3.1} that
	$\Gamma_1 \Box \cdots \Box \Gamma_{q+1} = 0,$ as required.
\end{proof}

We can now complete the proof of Theorem \ref{00.1} for the case of a group as in Theorem \ref{00.4}(3).

\begin{proposition}		\label{3.3}
	Let $G$ be an amenable just infinite group with trivial Baer radical and finite structure lattice. Let $n$ be as in Theorem \ref{00.4}(3). Then $\Asn(G)$ is nilpotent of index $n+1$.
\end{proposition}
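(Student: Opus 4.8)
The plan is to establish the two bounds on the index of nilpotence of $\Asn(G)$ separately, reducing the (harder) upper bound to Lemma~\ref{3.2} by a commensurability argument. For the lower bound $\Asn(G)^{\Box n}\neq\{0\}$, I would use the decomposition $H = H_1\times\cdots\times H_n$ from Theorem~\ref{00.4}(3): each factor satisfies $H_i \lhd H \lhd G$, so $H_i \approxlhd G$; being hereditarily just infinite it is infinite, and being a subgroup of the amenable group $G$ it is amenable, so it admits a non-zero invariant difference $\Gamma_i \in \D(H_i) \subseteq \Asn(G)$. Then $\Gamma_1\Box\cdots\Box\Gamma_n \in \Asn(G)^{\Box n}$, and it is non-zero by Lemma~\ref{0.4}, so the index of nilpotence is at least $n+1$; it remains to prove $\Asn(G)^{\Box(n+1)} = \{0\}$.

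For the upper bound it suffices to show that $\Gamma_1 \Box \cdots \Box \Gamma_{n+1} = 0$ whenever $\Gamma_i \in \D(K_i)$ for some $K_i \approxlhd G$, since a product of $n+1$ elements of $\Asn(G)$ is a linear combination of products of at least $n+1$ invariant differences on subnormal subgroups (and a product of more than $n+1$ of these is then also zero by associativity). The key move is to replace each $K_i$ by $\widetilde{K}_i := \prod_{j=1}^n (K_i \cap H_j)$. By Wilson's \cite[Proposition 7]{Wi71} together with the remarks preceding Lemma~\ref{3.2}, $\widetilde{K}_i$ has finite index in $K_i$; moreover $H \lhd G$ gives $K_i \cap H \lhd K_i$, and each $K_i \cap H_j = (K_i \cap H) \cap H_j$ is normal in $K_i \cap H$, so the subgroup $\widetilde{K}_i$ that they generate is normal in $K_i \cap H$, whence $\widetilde{K}_i \lhd K_i \cap H \lhd K_i$ is subnormal in $K_i$. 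Applying Corollary~\ref{0.1b} inside $K_i$, I would pick a transversal for $\widetilde{K}_i$ in $K_i$ with respect to which the right coset decomposition $\Gamma_i = \sum_l \Phi_{i,l} \Box \delta_{t_{i,l}}$ has every coset factor $\Phi_{i,l}$ in $\D(\widetilde{K}_i)$, with each $t_{i,l} \in K_i \subseteq G$.

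Then I would expand $\Gamma_1 \Box \cdots \Box \Gamma_{n+1}$ as a finite sum over $(l_1,\dots,l_{n+1})$ and, in each summand, push the point masses to the right via $\delta_t \Box \Phi = \Phi^t \Box \delta_t$, obtaining an expression $\Phi_{1,l_1}^{u_1} \Box \cdots \Box \Phi_{n+1,l_{n+1}}^{u_{n+1}} \Box \delta_x$ for suitable $u_1,\dots,u_{n+1},x \in G$. By Lemma~\ref{2.2} each $\Phi_{i,l_i}^{u_i}$ is an invariant difference on $(\widetilde{K}_i)^{u_i}$, and since conjugation by an element of $G$ only permutes the direct factors $H_1,\dots,H_n$, we get $(\widetilde{K}_i)^{u_i} = \prod_j (K_i^{u_i} \cap H_j^{u_i}) = \prod_{j} (K_i^{u_i} \cap H_j)$ after reindexing, with $K_i^{u_i} \approxlhd G$. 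Hence each summand is of exactly the form handled by Lemma~\ref{3.2} (the trailing $\delta_x$ not affecting vanishing), so it is $0$, and therefore $\Gamma_1 \Box \cdots \Box \Gamma_{n+1} = 0$. Combined with the lower bound, this shows $\Asn(G)$ is nilpotent of index exactly $n+1$.

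I expect the main obstacle to be the reduction to $\widetilde{K}_i$: one must check both that $\widetilde{K}_i$ has finite index in $K_i$ — which is precisely where Wilson's description of the subnormal subgroups of a group as in Theorem~\ref{00.4}(3) is used — and that it is subnormal in $K_i$, so that Corollary~\ref{0.1b} genuinely applies; and then one has to be careful that conjugation by $G$ does not fix the decomposition $H = \prod_i H_i$ factorwise but only permutes its factors, so that $(\widetilde{K}_i)^{u_i}$ retains the precise shape required as input to Lemma~\ref{3.2}.
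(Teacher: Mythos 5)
Your proposal is correct and follows essentially the same route as the paper's proof: the lower bound via Lemma~\ref{0.4} applied to non-zero invariant differences on the factors $H_1,\dots,H_n$, and the upper bound by decomposing each $\Gamma_i$ over cosets of $\prod_j (K_i\cap H_j)$ using Corollary~\ref{0.1b}, pushing point masses to the right, invoking Lemma~\ref{2.2} for the conjugated coset factors, and concluding with Lemma~\ref{3.2}. Your explicit verification that $\prod_j(K_i\cap H_j)$ is finite index (Wilson's Proposition~7) and subnormal in $K_i$ is a detail the paper leaves implicit, and it is handled correctly.
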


\begin{proof}
	Let $H_1, \ldots, H_n$ be as in Theorem \ref{00.4}(3), and write $M = H_1 \times \cdots \times H_n$. That $\Asn(G)^{\Box n} \neq \{ 0 \}$ follows from Lemma \ref{0.4} by considering $\Gamma_1 \Box \cdots \Box \Gamma _n$, for non-zero invariant differences 
	$\Gamma_i$ on $H_i \ (i=1, \ldots, n)$. We shall show that $\Asn(G)^{\Box (n+1)} = \{ 0 \}$. 
	
	To this end let $K_1, \ldots, K_{n+1} \approxlhd G$, and let $\Gamma_i \in \D(K_i) \ (i=1, \ldots, n+1)$. For each $i$ write $m_i = [K_i : \Pi_{j=1}^n K_i \cap H_j]$, choose a right transversal $t_{i, 1}, \ldots, t_{i, m_i}$ for $\Pi_{j=1}^n K_i \cap H_j$ in $K_i$ satisfying Corollary \ref{0.1b}, and let 
	$$\Gamma_i = \sum_{j=1}^{m_i} \Phi_{i,j} \Box \delta_{t_{i,j}}$$
	be the corresponding right coset decomposition. Choosing each transversal in accordance with Corollary \ref{0.1b} ensures that each $\Phi_{i,j}$ is an invariant difference on $K_i$. We have
	\begin{equation}		\label{eq3.2}
	\Gamma_1 \Box \cdots \Box \Gamma_{n+1} = \sum_{j_1=1}^{m_1} \cdots \sum_{j_{n+1} = 1}^{m_{n+1}} \Phi_{1, j_1} \Box 
	\Phi_{2, j_2}^{t_{1, j_1}} \Box \cdots \Box \Phi_{n+1, j_{n+1}}^{t_{1, j_1} \cdots t_{n, j_n}} \Box \delta_{t_{1, j_1}}
	 \Box \cdots \Box \delta_{t_{n+1, j_{n+1}}}.
	\end{equation}
	For any indices $i$ and $j$, and any $s \in G$, Lemma \ref{2.2} implies that $\Phi_{i,j}^s$ is an invariant difference on $\Pi_{l=1}^n (K_i \cap H_l)^s = \Pi_{l=1}^n K_i^s \cap H_l$. Hence, by Lemma \ref{3.2} each of the products in the sum \eqref{eq3.2} are equal to zero, and the result follows.
\end{proof}

We are now in a position to prove Theorem \ref{00.1}.

\begin{proof}[Proof of Theorem \ref{00.1}]
	Suppose that $G$ is a branch group. Let $(H_i, L_i, k_i)_{i \in \N}$ be a branch structure for $G$, where $H_i  = L_i^{(1)} \times \cdots \times L_i^{(k_i)}$, and let $\Gamma_{i,j}$ be an invariant difference on $L_i^{(j)}$. Then by Lemma \ref{0.4} $\Gamma_{i,1} \Box \cdots \Box \Gamma_{i, k_i} \neq 0$, and, since $k_i \rightarrow \infty$ as $i \rightarrow \infty$, the algebra $\Asn(G)$ cannot be nilpotent.

	If $G$ is not a branch group, then by Theorem \ref{00.4} there are two cases to consider.
	In the case that $G$ is finitely-generated and virtually abelian, $\Asn(G)$ is nilpotent of index $\rank G +1$ by Theorem \ref{3.5b}. 
	
	Finally, suppose that $G$ is as in Theorem \ref{00.4}(3). By Proposition \ref{3.3} $\Asn(G)$ is nilpotent of index $n+1$. It follows from \cite{Wi71} that the structure lattice of $G$ is isomorphic to the lattice of subsets of $\{1, \ldots, n \}$, so that $n+1 = \log_2 \vert \mathcal{L}(G) \vert + 1$.
\end{proof}

\section{The Algebras $\Jsn(G)$ and $\rad \ell^1(G)^{**}$ When $G$ is a Branch Group}
\noindent
In this section we investigate the structure of $\Jsn(G)$ and $\rad \ell^1(G)^{**}$ when $G$ is an amenable branch group, and also prove Theorem \ref{00.2} and Theorem \ref{00.5}. We begin by fixing some notation.

Let $G$ be a group for which there exists a finite or infinite sequence of natural numbers $(k_i)$, together with either finite or infinite descending chains of subgroups $H_1 \supset H_2 \supset \cdots$ and $L_1 \supset L_2 \supset \cdots $, all of the same length, such that $k_1 \geq 2$ and such that conditions (2)-(5) of Definition \ref{00.3} hold. In this case we refer to the sequence $(H_i, L_i, k_i)$ as a \textit{partial branch structure} for $G$. Note that a branch structure is in particular a partial branch structure, and also that just infinite groups as in condition (3) of Theorem \ref{00.4} possess a partial branch structure of length 1.

Let $G$ be group which has a partial branch structure $(H_i, L_i, k_i)$. We shall write $\pi_{i,j} \colon H_i \rightarrow H_i$ for the map which deletes the direct factor $L_i^{(j)}$. Also write $P_i \colon \ell^1(G) \rightarrow \ell^1(H_i)$ for the projection map. Given $i \in \N$ and $j \in \{1, \ldots, k_i \}$ we define
$$I_{i,j} = \left\{ \Phi \in \ell^1(G)^{**} : \delta_{s} \Box P_i^{**}(\delta_t \Box \Phi) =  P_i^{**}(\delta_t \Box \Phi), \ \pi_{i,j}^{**}(P_i^{**}(\delta_{t} \Box \Phi)) = 0 \ (s \in L^{(j)}_i, t \in G) \right\}.$$
Let $t_1, \ldots, t_n$ be a left transversal for $H_i$ in $G$, let $\Phi \in \ell^1(G)^{**}$, and let $\Phi = \sum_k \delta_{t_k} \Box \Phi_k$ be its left coset decompositon.
The functional $\Phi$ belongs to $I_{i,j}$ if and only if every $\Phi_k$ is $L_i^{(j)}$-left-invariant and satisfies $\pi_{i,j}^{**}(\Phi_k) = 0$. For this reason we find it useful to also define a set $U_{i,j}$ by
$$U_{i,j} = \left\{ \Phi \in \ell^1(H_i)^{**} : \delta_s \Box \Phi = \Phi \ ( s \in L_i^{(j)} ), \ \pi_{i,j}^{**}(\Phi) = 0 \right\}.$$
By \cite[Proposition 4.2, Lemma 4.3]{W2} $U_{i,j}$ is a left ideal of $\ell^1(H_i)^{**}$ satisfying $U_{i,j}^{\Box 2} = 0$. We shall typically identify $U_{i,j}$ with its image inside $\ell^1(G)^{**}$. 
We shall show in due course that $I_{i,j}$ is always a nilpotent left ideal of $\ell^1(G)^{**}$, and we shall use ideals of this form to prove Theorem \ref{00.2}. The next lemma establishes that $I_{i,j}$ is indeed a left ideal.

\begin{lemma}		\label{5.1}
	Let $i \in \N$ and $j \in \{1, \ldots, k_i \}$. Then
	\begin{enumerate}
		\item[\rm (i)] the set $I_{i,j}$ is the left ideal of $\ell^1(G)^{**}$ generated by $U_{i,j}$ and is weak*-closed;
		\item[\rm (ii)] we have $\D(L_i^{(j)}) \subset U_{i,j}$.
	\end{enumerate}
\end{lemma}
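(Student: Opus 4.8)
The plan is to verify the two claims by unwinding the definitions of $I_{i,j}$ and $U_{i,j}$ and using the structural facts quoted from \cite{W2}. For part (i), I would begin by recording the description already noted in the text: using a left transversal $t_1, \ldots, t_n$ for $H_i$ in $G$ and the left coset decomposition $\Phi = \sum_k \delta_{t_k} \Box \Phi_k$ (with $\Phi_k \in \ell^1(H_i)^{**}$), the defining conditions on $\Phi$ translate precisely into the requirement that each coset factor $\Phi_k$ lies in $U_{i,j}$. This should follow because $P_i^{**}(\delta_t \Box \Phi)$ picks out (a conjugate of) the relevant coset factor, and both the $L_i^{(j)}$-left-invariance condition and the vanishing of $\pi_{i,j}^{**}$ are conditions that only see the $H_i$-component. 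From this coset-factor characterisation, the identity $I_{i,j} = \bigoplus_k \delta_{t_k} \Box U_{i,j}$ is immediate, and since $U_{i,j}$ is a left ideal of $\ell^1(H_i)^{**}$ (by \cite[Proposition 4.2, Lemma 4.3]{W2}), a direct computation shows $\ell^1(G)^{**} \Box I_{i,j} \subseteq I_{i,j}$: multiplying on the left by $\delta_s$ for $s \in G$ permutes the cosets $H_i t_k^{-1}$ and conjugates the factors, and conjugation by an element of $G$ is not available in general — but here one only conjugates $U_{i,j}$ by elements that move between the fixed cosets, and one checks this stays inside $I_{i,j}$ directly from the original definition of $I_{i,j}$ (which is manifestly stable under left multiplication by any $\delta_t$, $t \in G$, because the defining conditions quantify over all $t \in G$). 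Weak*-closedness then follows because $I_{i,j}$ is defined by the vanishing of a family of weak*-continuous-in-$\Phi$ expressions: each map $\Phi \mapsto \delta_s \Box P_i^{**}(\delta_t \Box \Phi) - P_i^{**}(\delta_t \Box \Phi)$ and $\Phi \mapsto \pi_{i,j}^{**}(P_i^{**}(\delta_t \Box \Phi))$ is weak*-weak*-continuous (it is a composition of the weak*-continuous maps $\Phi \mapsto \delta_t \Box \Phi$, the second adjoints $P_i^{**}$ and $\pi_{i,j}^{**}$, and $\Psi \mapsto \delta_s \Box \Psi$ — all of which are weak*-continuous since right multiplication and second-adjoint maps are weak*-continuous, and $\delta_t \Box (-)$ is weak*-continuous as left multiplication by an element of $\ell^1(G)$ which is weak*-dense-continuous... more carefully, $\delta_t \Box (-) $ equals the second adjoint of $\delta_t * (-)$, hence weak*-continuous), so $I_{i,j}$ is an intersection of kernels of weak*-continuous maps, hence weak*-closed.

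For part (ii), let $\Gamma \in \D(L_i^{(j)})$. The goal is to show $\Gamma \in U_{i,j}$, i.e. that $\Gamma$ is supported on $H_i$, is $L_i^{(j)}$-left-invariant, and satisfies $\pi_{i,j}^{**}(\Gamma) = 0$. The first is clear since $L_i^{(j)} \leq H_i$ and an invariant difference on $L_i^{(j)}$ is by definition supported on $L_i^{(j)}$, hence on $H_i$; the second is precisely the defining left-invariance of an invariant difference. For the third, $\pi_{i,j} \colon H_i \to H_i$ deletes the factor $L_i^{(j)}$, so $\pi_{i,j}$ restricted to $L_i^{(j)}$ is the trivial homomorphism sending all of $L_i^{(j)}$ to $e$; thus $\pi_{i,j}^{**}$ applied to something supported on $L_i^{(j)}$ yields a scalar multiple of $\delta_e$, namely $\langle \Gamma, 1 \rangle \delta_e$, which is $0$ since $\langle \Gamma, 1 \rangle = 0$. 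Hence $\pi_{i,j}^{**}(\Gamma) = 0$ and $\Gamma \in U_{i,j}$.

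The main obstacle I anticipate is the bookkeeping in part (i): carefully justifying the claimed equivalence between membership in $I_{i,j}$ and the condition "every left coset factor lies in $U_{i,j}$", since $P_i^{**}(\delta_t \Box \Phi)$ is a conjugated coset factor rather than a coset factor on the nose, and one must check that the $L_i^{(j)}$-invariance and the vanishing of $\pi_{i,j}^{**}$ are unaffected by the relevant conjugations — here one uses that $H_i \lhd G$ and that conjugation by $G$ permutes the direct factors $L_i^{(1)}, \ldots, L_i^{(k_i)}$ (condition (5) of the branch structure), so one has to track which factor gets moved where. Once that translation is pinned down, both the left-ideal property and weak*-closedness are routine, and part (ii) is a short direct verification.
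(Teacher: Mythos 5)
Your part (ii) and your weak*-closedness argument coincide with the paper's: for $\Gamma$ supported on $L_i^{(j)}$ one has $\pi_{i,j}^{**}(\Gamma)=\langle\Gamma,1\rangle\,\delta_e=0$, and $I_{i,j}$ is an intersection of kernels/equalizers of weak*-continuous maps (your observation that $\delta_t\Box(-)$ is the second adjoint of left convolution by $\delta_t$ is exactly the right justification). Your coset-factor characterisation of $I_{i,j}$ is also correct, and in fact the bookkeeping you worry about is lighter than you suggest: since $P_i^{**}(\delta_t\Box\Phi)=\delta_h\Box\Phi_k$ with $h=tt_k\in H_i$, only conjugation by elements of $H_i$ enters, and such conjugation preserves each direct factor $L_i^{(l)}$; normality of $L_i^{(j)}$ in $H_i$ suffices, and condition (5) of the branch structure plays no role here.

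Where you diverge from the paper is the left-ideal step, and there your argument as written has a small gap. You assert $\ell^1(G)^{**}\Box I_{i,j}\subseteq I_{i,j}$ ``by a direct computation,'' but what you actually verify is only stability under left multiplication by point masses $\delta_s$, $s\in G$; since left Arens multiplication by a general element of $\ell^1(G)^{**}$ is not weak*-continuous, this does not follow ``directly from the definition'' and needs an explicit bridge. You can close the gap with ingredients you already have: for fixed $\Psi\in I_{i,j}$ the map $\Lambda\mapsto\Lambda\Box\Psi$ is weak*-continuous (right multiplication for the first Arens product), the span of the point masses is weak*-dense in $\ell^1(G)^{**}$, and you have proved that $I_{i,j}$ is a weak*-closed linear subspace. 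The paper avoids any approximation argument altogether: given $\Lambda\in\ell^1(G)^{**}$ and $\Psi\in U_{i,j}$, it decomposes the \emph{left} factor, $\Lambda=\sum_k\delta_{t_k}\Box\Lambda_k$ with $\Lambda_k\in\ell^1(H_i)^{**}$, so that $\Lambda\Box\Psi=\sum_k\delta_{t_k}\Box(\Lambda_k\Box\Psi)$ with each $\Lambda_k\Box\Psi\in U_{i,j}$ because $U_{i,j}$ is a left ideal of $\ell^1(H_i)^{**}$; by the coset-factor characterisation this gives $\ell^1(G)^{**}\Box U_{i,j}\subseteq I_{i,j}$, while the reverse inclusion is your identity $I_{i,j}=\bigoplus_k\delta_{t_k}\Box U_{i,j}$. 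Either repair is a one-line fix, but the paper's computation is the cleaner route since it needs neither density nor the prior weak*-closedness.
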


\begin{proof}
	(i) The fact that $I_{i,j}$ is weak*-closed follows from the fact that $P_i^{**}$, $\pi_{i,j}^{**}$, and multiplication by $\delta_t$ for some $t \in G$ are all weak*-continuous. Given $\Lambda \in \ell^1(G)^{**}$ and $ \Psi \in U_{i,j}$, let $\Lambda = \sum_{k=1}^n \delta_{t_k} \Box \Lambda_k$ be a left coset decomposition for $\Lambda$ with respect to $H_i$. Then $\Lambda \Box \Psi = \sum_{k=1}^n \delta_{t_k} \Box \Lambda_k \Box \Psi$, and each $\Lambda_k \Box \Psi \in U_{i,j}$ since $U_{i,j}$ is a left ideal in $\ell^1(H_i)^{**}$. Hence $\ell^1(G)^{**} \Box U_{i,j} \subset I_{i,j}$. The reverse inclusion is immediate.
	
	(ii) This follows from the observation that for $\Phi \in \ell^1(L_i^{(j)})^{**}$ we have $\pi_{i,j}^{**}(\Phi) = \langle \Phi, 1 \rangle \delta_e$.
\end{proof}

\begin{lemma}		\label{0.2}
	Let $G$ be a group which has a partial branch structure  $(H_i, L_i, k_i)$. Fix $k_p$ belonging to $(k_i)$, let $t \in G$, and suppose that $\sigma \in S_{k_p}$ satisfies
	$$tL_p^{(j)}t^{-1} = L_p^{(\sigma(j))} \quad (j=1, \ldots, k_p).$$
	\begin{enumerate}
		\item[\rm (i)] We have
		$$U_{p,j}^t = U_{p, \sigma(j)} \quad (j=1, \ldots, k_p).$$
		\item[\rm (ii)] Suppose that $\Phi \in \ell^1(H_p)^{**}$ is supported on $L_p^{(j)}$. Then $\Phi^t$ is supported on $L_p^{(\sigma(j))}$.
	\end{enumerate}
\end{lemma}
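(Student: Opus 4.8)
The plan is to handle part (ii) first, as a purely support-theoretic statement, and then build part (i) on top of it together with two further observations about how left-invariance and the deletion maps $\pi_{p,j}$ interact with conjugation by $\delta_t$. Throughout I would use that each $H_i$ is normal in $G$ (condition (2) of Definition \ref{00.3}), so that $H_p^t = H_p$ and, exactly as in the proof of Lemma \ref{2.2}, conjugation by $\delta_t$ restricts to a weak*-homeomorphism of $\ell^1(H_p)^{**}$ onto itself, which moreover coincides with the second conjugate $(c_t)^{**}$ of the group automorphism $c_t \colon H_p \to H_p$, $x \mapsto txt^{-1}$ (this holds on point masses and passes to the bidual by weak*-continuity and linearity).

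For (ii): since $tL_p^{(j)}t^{-1} = L_p^{(\sigma(j))}$, conjugation by $\delta_t$ carries $\ell^1(L_p^{(j)})$ onto $\ell^1(L_p^{(\sigma(j))})$, and being a weak*-homeomorphism it therefore carries the weak*-closure of the former onto the weak*-closure of the latter, giving
$$\left(\ell^1(L_p^{(j)})^{**}\right)^t = \left(\overline{\ell^1(L_p^{(j)})}^{\,w^*}\right)^t = \overline{\ell^1(L_p^{(j)})^t}^{\,w^*} = \overline{\ell^1(L_p^{(\sigma(j))})}^{\,w^*} = \ell^1(L_p^{(\sigma(j))})^{**}.$$
This is precisely the support argument from the proof of Lemma \ref{2.2}, so if $\Phi \in \im \iota^{**}$ for $\iota \colon \ell^1(L_p^{(j)}) \hookrightarrow \ell^1(G)$, then $\Phi^t$ is supported on $L_p^{(\sigma(j))}$.

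For (i): Let $\Phi \in U_{p,j}$; the task is to show $\Phi^t \in U_{p,\sigma(j)}$, after which the reverse inclusion follows by symmetry. First, $\Phi^t \in \ell^1(H_p)^{**}$ by normality of $H_p$. Second, for $L_p^{(\sigma(j))}$-left-invariance: given $s' \in L_p^{(\sigma(j))}$, write $s' = tst^{-1}$ with $s \in L_p^{(j)}$; then $\delta_{s'} \Box \Phi^t = \delta_t \Box \delta_s \Box \Phi \Box \delta_{t^{-1}} = \delta_t \Box \Phi \Box \delta_{t^{-1}} = \Phi^t$, using $\delta_s \Box \Phi = \Phi$. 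Third, I claim $\pi_{p,\sigma(j)}^{**}(\Phi^t) = 0$. The key is the group-level identity $c_t \circ \pi_{p,j} = \pi_{p,\sigma(j)} \circ c_t$ as endomorphisms of $H_p$: evaluated on $x = \prod_{k=1}^{k_p} x_k$ with $x_k \in L_p^{(k)}$, both sides equal $\prod_{k \neq j} c_t(x_k)$, because $c_t$ carries $L_p^{(k)}$ onto $L_p^{(\sigma(k))}$ and the indices $k \neq j$ correspond exactly to the indices $\sigma(k) \neq \sigma(j)$. Taking second conjugates and using $(c_t)^{**} = (\cdot)^t$ on $\ell^1(H_p)^{**}$ gives $\pi_{p,\sigma(j)}^{**}(\Phi^t) = \bigl(\pi_{p,j}^{**}(\Phi)\bigr)^t = 0$. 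Hence $\Phi^t \in U_{p,\sigma(j)}$, so $U_{p,j}^t \subseteq U_{p,\sigma(j)}$. Applying this with $t$ replaced by $t^{-1}$ (for which the relevant permutation is $\sigma^{-1}$, since $t^{-1}L_p^{(l)}t = L_p^{(\sigma^{-1}(l))}$) yields $U_{p,\sigma(j)}^{t^{-1}} \subseteq U_{p,j}$, and conjugating back by $\delta_t$ gives $U_{p,\sigma(j)} \subseteq U_{p,j}^t$; equality follows.

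The only ingredient genuinely new compared with Lemmas \ref{2.2} and \ref{0.1a} is the commuting relation $c_t \circ \pi_{p,j} = \pi_{p,\sigma(j)} \circ c_t$, together with the identification of conjugation by $\delta_t$ on $\ell^1(H_p)^{**}$ with $(c_t)^{**}$; once these are established, everything reduces to the now-familiar combination of weak*-continuity and point-mass computations. I therefore expect the verification of this $\pi$-intertwining (and keeping the permutation $\sigma$ versus $\sigma^{-1}$ straight in the two inclusions) to be the main, and fairly modest, obstacle.
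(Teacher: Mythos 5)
Your proof is correct and follows essentially the same route as the paper: both hinge on the intertwining identity $c_t\circ\pi_{p,j}=\pi_{p,\sigma(j)}\circ c_t$ (the paper phrases it as $\alpha\pi_j\alpha^{-1}=\pi_{\sigma(j)}$ after writing $\alpha=\beta\circ\gamma$, while you verify it by direct evaluation on a general element of the direct product), then check the invariance and kernel conditions and obtain the reverse inclusion by symmetry. Your explicit identification of conjugation by $\delta_t$ on $\ell^1(H_p)^{**}$ with $(c_t)^{**}$, and part (ii) via the Lemma \ref{2.2} support argument, match what the paper does implicitly.
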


\begin{proof}
	(i) Fix $j$, and write $k = k_p$ and $\pi_i = \pi_{p,i} \ (i = 1, \ldots, k)$ in order to simplify notation. Let $\alpha \in \operatorname{Aut}(H_p)$ denote conjugation by $t$. We first show that $\alpha \pi_j \alpha^{-1} = \pi_{\sigma(j)}$. We know that $\alpha$ may be written as $\beta \circ \gamma$, where $\beta = (\beta_1, \ldots, \beta_k)$ for some $\beta_i \in \operatorname{Aut}(L_p^{(i)}) \ (i = 1, \ldots, k)$, and $\gamma$ acts by permuting the factors $L_p^{(i)}$ via $\sigma$, that is 
	$$\gamma(u_1, \ldots, u_k) = (u_{\sigma^{-1}(1)}, \ldots, u_{\sigma^{-1}(k)}) \qquad (u_i \in L^{(i)}_p, \ i=1, \ldots, k).$$
	We now calculate that, for $u_i \in L^{(i)}_p \ (i = 1, \ldots, k)$, we have
	\begin{align*}
	\alpha \pi_j \alpha^{-1} (u_1, \ldots, u_k) &= \alpha \pi_j (\beta^{-1}_{\sigma(1)}(u_1), \ldots, \beta^{-1}_{\sigma(k)}(u_k)) \\
	&= \alpha (\beta^{-1}_{\sigma(1)}(u_1), \ldots, e, \ldots, \beta^{-1}_{\sigma(k)}(u_k))
	&\text{where }e\text{ appears in the }j^{\rm th}\text{ place} \\
	&= (u_1, \ldots, e, \ldots, u_k) 
	&\text{where }e\text{ appears in the }\sigma(j)^{\rm th}\text{ place}\\
	&= \pi_{\sigma (j)}(u_1, \ldots, u_k),
	\end{align*}
	as required. 
	
	Let $\Phi  \in U_{p,j}$, and let $s \in L^{(\sigma(j))}_p$. Then 
	$$
	\delta_s \Box \alpha^{**}(\Phi) = \alpha^{**}(\delta_{\alpha^{-1}(s)} \Box \Phi) = \alpha^{**}(\Phi). 
	$$
	Also $\pi_j^{**}\alpha^{**}(\Phi) = \alpha^{**}\pi_{\sigma(j)}^{**}(\Phi) = \alpha^{**}(0) = 0.$ Hence $\alpha^{**}(U_{p,j}) \subset U_{p,\sigma(j)}$. 
	By symmetry, we also have $(\alpha^{-1})^{**}(U_{p,\sigma(j)}) \subset U_{p,j}$, which implies that 
	$\alpha^{**}(U_{p,j}) = U_{p, \sigma(j)}$, as required.
	
	(ii) Argue as in Lemma \ref{2.2}.
\end{proof}

\begin{lemma}		\label{0.3}
	Let $G$ be a group which has a partial branch structure $(H_i, L_i, k_i)$. Fix $k = k_p$ belonging to $(k_i)$. Suppose that $\Psi_1, \ldots, \Psi_k \in \ell^1(G)^{**}$ and that each $\Psi_j$ belongs to one of the sets $U_{p,i} \ (i =1, \ldots, k)$. 
	Let $s_1, \ldots, s_{k+1} \in G$. Then
	\begin{enumerate}
		\item[\rm (i)] $\Psi_1^{s_1} \Box \cdots \Box \Psi_{k+1}^{s_{k+1}} = 0;$
		\item[\rm (ii)] $(\Psi_1 \Box \delta_{s_1}) \Box \cdots \Box (\Psi_{k+1} \Box \delta_{s_{k+1}}) = 0.$
	\end{enumerate}
\end{lemma}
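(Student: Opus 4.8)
The plan is to prove (ii) first and deduce (i) as a special case by taking all $s_j = e$; in fact it is cleaner to prove a slightly more flexible statement and then specialise. Since each $\Psi_j$ lies in some $U_{p,i_j}$, it is supported on $H_p$ and is $L_p^{(i_j)}$-left-invariant with $\pi_{p,i_j}^{**}(\Psi_j) = 0$. The key combinatorial fact is that there are only $k$ direct factors $L_p^{(1)}, \ldots, L_p^{(k)}$, so among the $k+1$ functionals $\Psi_1, \ldots, \Psi_{k+1}$ two of them must be associated to the same factor index. I would aim to show that once two consecutive-in-the-product functionals (after moving point masses to the right) are attached to the same factor, their product vanishes, using Lemma \ref{0.4b}(i) together with the $\pi$-annihilation condition.

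More concretely, first I would push all the point masses $\delta_{s_j}$ to the right using the conjugation identity $\Psi \Box \delta_s = \delta_s \Box (\delta_{s^{-1}} \Box \Psi \Box \delta_s) = \delta_s \Box \Psi^{s^{-1}}$ repeatedly, rewriting the product in the form $\Psi_1^{u_1} \Box \Psi_2^{u_2} \Box \cdots \Box \Psi_{k+1}^{u_{k+1}} \Box \delta_x$ for suitable $u_1, \ldots, u_{k+1}, x \in G$. Now I need to know which factor each $\Psi_j^{u_j}$ is attached to: by Lemma \ref{0.2}, if $u_j$ conjugates $L_p^{(\cdot)}$ according to a permutation $\sigma_j \in S_{k_p}$, then $\Psi_j^{u_j} \in U_{p, \sigma_j(i_j)}$, so it is again one of the sets $U_{p,1}, \ldots, U_{p,k}$ — but one must be careful here that $u_j$ normalises $H_p$ and permutes the factors, which holds because conjugation by a general element of $G$ permutes the $L_p^{(l)}$ by condition (5) of the partial branch structure, and $H_p \lhd G$ by condition (2). (If $u_j \notin H_p$ the permutation may be non-trivial, but it is still \emph{some} permutation of $\{1, \ldots, k\}$, which is all we need.) Thus after relabelling we have $k+1$ functionals $\Xi_1, \ldots, \Xi_{k+1}$ with $\Xi_j \in U_{p, c_j}$ for indices $c_1, \ldots, c_{k+1} \in \{1, \ldots, k\}$, so by pigeonhole $c_a = c_b$ for some $a < b$.

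The final step is to exploit this repetition. The functional $\Xi_b$ is supported on $L_p^{(c_b)}$? No — that is false in general; $\Xi_b$ is only $L_p^{(c_b)}$-left-invariant with $\pi_{p,c_b}^{**}(\Xi_b) = 0$. Instead I would work with the preceding functional: consider the product $\Xi_{b-1} \Box \Xi_b$. Since $\Xi_{b-1} \in U_{p,c_{b-1}} \subset \ell^1(H_p)^{**}$, and $U_{p,c_b}$ is a left ideal of $\ell^1(H_p)^{**}$ with $U_{p,c_b}^{\Box 2} = 0$ (cited from \cite{W2}), repeated application gives that any product of two elements lying in the \emph{same} $U_{p,c}$ is zero; but here $c_{b-1}$ need not equal $c_b$. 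The correct mechanism is: among $\Xi_1, \ldots, \Xi_{k+1}$, since the factor-indices $c_j$ take at most $k$ values, the set $\{c_1, \ldots, c_{k+1}\}$ does not surject onto all $k$ values with all multiplicities one, so there is a repeat $c_a = c_b$; I would then show by induction on $k$ that a product $\Xi_1 \Box \cdots \Box \Xi_{k+1}$ of elements of $\bigcup_i U_{p,i}$ with a repeated index must vanish, reducing via Lemma \ref{0.4b}(ii) (each $\Xi_1 \Box \cdots \Box \Xi_{m}$ remains left-invariant for the common normal subgroup generated by the relevant factors) and the nilpotency $U_{p,c}^{\Box 2} = 0$, or else — if no index repeats among some sub-block — one has a strictly shorter block to which the inductive hypothesis applies.

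The main obstacle I expect is bookkeeping the factor-index permutations correctly: one must verify that conjugating a $U_{p,j}$ by an arbitrary $u \in G$ (not just $u \in H_p$) still lands in some $U_{p,\sigma(j)}$, and that Lemma \ref{0.2} as stated covers this — which it does, since its hypothesis only requires $tL_p^{(j)}t^{-1} = L_p^{(\sigma(j))}$, guaranteed by condition (5). The secondary obstacle is organising the pigeonhole-plus-induction cleanly: the induction should be on $k$, with the inductive step splitting on whether deleting $\Xi_1$ still leaves $k+1$ functionals attached to only $k-1$ factors (apply induction directly) or whether the first factor index $c_1$ must reappear later, in which case one uses left-invariance to collapse the product and the $U^{\Box 2} = 0$ relation.
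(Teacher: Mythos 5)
Your setup matches the paper's proof: push the point masses to the right, use Lemma \ref{0.2}(i) (which, as you note, applies to conjugation by arbitrary elements of $G$ because condition (5) guarantees every $t \in G$ permutes the factors $L_p^{(1)}, \ldots, L_p^{(k)}$) to see that each conjugated functional still lies in some $U_{p,l}$, and then pigeonhole to find two of the $k+1$ functionals attached to the same index $l$. Up to this point the argument is correct and is exactly the published one (the paper proves (i) first and deduces (ii) by moving the $\delta_{s_j}$'s right; your orientation is equivalent).

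The gap is at the final step, where you notice that the two repeated-index functionals need not be adjacent and then retreat to an unexecuted ``induction on $k$'' involving Lemma \ref{0.4b}(ii), sub-blocks, and invariance under a ``common normal subgroup'' --- none of which is needed, and as sketched it does not constitute a proof (in particular, Lemma \ref{0.4b}(ii) only tracks left-invariance and says nothing about the condition $\pi_{p,l}^{**}(\cdot)=0$, so it cannot by itself place a partial product back into a set $U_{p,l}$). The missing observation is simply that adjacency is irrelevant because $U_{p,l}$ is a \emph{left ideal} of $\ell^1(H_p)^{**}$ and every conjugated factor $\Xi_j$ lies in $\ell^1(H_p)^{**}$: if $\Xi_a, \Xi_b \in U_{p,l}$ with $a<b$, then by associativity
$$\Xi_1 \Box \cdots \Box \Xi_{k+1} = \bigl(\Xi_1 \Box \cdots \Box \Xi_a\bigr) \Box \bigl(\Xi_{a+1} \Box \cdots \Box \Xi_b\bigr) \Box \bigl(\Xi_{b+1} \Box \cdots \Box \Xi_{k+1}\bigr),$$
and each of the first two bracketed blocks belongs to $U_{p,l}$ (each is a left multiple, within $\ell^1(H_p)^{**}$, of its last factor), so their product lies in $U_{p,l}^{\Box 2} = \{0\}$ and the whole product vanishes. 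You had cited both the left-ideal property and $U_{p,l}^{\Box 2} = \{0\}$, so the fix requires no new ingredients; but as written the crux of the lemma is left unproved.
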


\begin{proof}
	(i) By Lemma \ref{0.2}(i) the sets $U_{p,i}^{s_j}$ are also of the form $U_{p,l}$ for some $l \in \{1, \ldots, k \}$, and so by a pigeon hole argument two of the elements 
	of $\{\Psi_1^{s_1}, \ldots, \Psi_{k+1}^{s_{k+1}}\}$ come from the same set $U_{p,l}$. Since each $U_{p,l}$ is an ideal in $\ell^1(H_p)^{**}$, it follows that the product $\Psi_1^{s_1} \Box \cdots \Box \Psi_{k+1}^{s_{k+1}}$ is in fact the  product of two elements of some $U_{p,l}$. Since $U_{p,l}^{\Box 2} = \{ 0 \}$, the above product must also be zero.
	
	(ii) We have 
	$$(\Psi_1 \Box \delta_{s_1}) \Box \cdots \Box (\Psi_{k+1} \Box \delta_{s_{k+1}}) = \Psi_1 \Box \Psi_2^{s_1} \Box \cdots \Box \Psi_{k+1}^{s_1s_2 \cdots s_k} \Box \delta_{s_1s_2 \cdots s_{k+1}},$$
	so the result follows from part (i).
\end{proof}

The main result of this section is the next proposition, from which Theorem \ref{00.2} will follow by a short argument. It is also an important ingredient in the proof of Theorem \ref{00.5}.

\begin{proposition}		\label{5.3}
	Let $G$ be an amenable group with a partial branch structure $(H_i, L_i, k_i)$. For each $p \in \N$ at most the length of $(k_i)$ and each $j \in \{1, \ldots, k_p \}$ the left ideal $I_{p,r}$ is nilpotent of index $k_p+1$. The same holds for $\Jsn(G) \cap I_{p,r}$.
\end{proposition}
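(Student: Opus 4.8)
The plan is to show the two halves of the statement — namely $I_{p,r}^{\Box(k_p+1)} = \{0\}$ together with $I_{p,r}^{\Box k_p} \neq \{0\}$ (and similarly for $\Jsn(G) \cap I_{p,r}$) — by reducing products in $I_{p,r}$ to products of elements of the ideals $U_{p,i}$ and then invoking Lemma \ref{0.3}. First I would recall from Lemma \ref{5.1}(i) that every element of $I_{p,r}$ lies in $\ell^1(G)^{**} \Box U_{p,r}$, and hence, using the left coset decomposition of an arbitrary $\Lambda \in \ell^1(G)^{**}$ with respect to the finite-index subgroup $H_p$, that every element of $I_{p,r}$ can be written as a finite sum of terms of the form $\delta_t \Box \Psi$ with $t \in G$ and $\Psi \in U_{p,r}$ (identified with its image in $\ell^1(G)^{**}$). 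Therefore a general product of $k_p+1$ elements of $I_{p,r}$ is a finite sum of terms of the form
$$(\delta_{t_1} \Box \Psi_1) \Box (\delta_{t_2} \Box \Psi_2) \Box \cdots \Box (\delta_{t_{k_p+1}} \Box \Psi_{k_p+1}),$$
where each $\Psi_j \in U_{p,r}$; by pushing the point masses to the right as in Lemma \ref{0.3}(ii), each such term becomes $\delta_{t_1} \Box \Psi_1^{e} \Box \Psi_2^{s_2} \Box \cdots \Box \Psi_{k_p+1}^{s_{k_p+1}} \Box \delta_x$ for suitable group elements, and the middle factor $\Psi_1 \Box \Psi_2^{s_2} \Box \cdots \Box \Psi_{k_p+1}^{s_{k_p+1}}$ vanishes by Lemma \ref{0.3}(i) applied with $k = k_p$ (all the $\Psi_j$ come from the single set $U_{p,r}$, which is certainly ``one of the sets $U_{p,i}$''). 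This gives $I_{p,r}^{\Box(k_p+1)} = \{0\}$.

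Next I would establish that the index is exactly $k_p+1$, i.e. $I_{p,r}^{\Box k_p} \neq \{0\}$. Here I would use the transitivity condition (5) of the (partial) branch structure: the conjugation action of $G$ on $H_p$ permutes the factors $L_p^{(1)}, \ldots, L_p^{(k_p)}$ transitively, so for each $j$ there is $t_j \in G$ with $t_j L_p^{(r)} t_j^{-1} = L_p^{(j)}$, and by Lemma \ref{0.2}(i) we have $U_{p,r}^{t_j} = U_{p,j}$; equivalently, picking a nonzero invariant difference $\Gamma_r \in \D(L_p^{(r)}) \subset U_{p,r}$ (nonzero since $G$, hence $L_p^{(r)}$, is amenable and infinite — note $L_p^{(r)}$ is infinite because the $H_i$ descend to the trivial group while remaining finite-index, so no $L_i$ can be finite), the conjugates $\Gamma_r^{t_j} \in \D(L_p^{(j)})$ are nonzero elements of $U_{p,j} \subset I_{p,r}^{t_j}$. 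But each $\Gamma_r^{t_j} = \delta_{t_j} \Box \Gamma_r \Box \delta_{t_j^{-1}}$ lies in $I_{p,r}$ since $I_{p,r}$ is a left ideal and... hmm, this needs care — $I_{p,r}$ is only a \emph{left} ideal, so $\Gamma_r^{t_j}$ need not lie in $I_{p,r}$. Instead I would argue directly: take $\Psi_j := \delta_{t_j} \Box \Gamma_r^{t_j} = \delta_{t_j} \Box (\text{element of } U_{p,j})$, which \emph{does} lie in $I_{p,r}$ by Lemma \ref{5.1}(i), and compute the product $\Psi_1 \Box \cdots \Box \Psi_{k_p}$: pushing point masses right turns it into $\delta_{t_1} \Box \Gamma_r^{t_1} \Box \Gamma_r^{t_2 \cdot(\cdots)} \Box \cdots \Box \delta_x$ where, after tracking conjugations, the core is a product of nonzero invariant differences supported on the \emph{distinct} direct factors $L_p^{(1)}, \ldots, L_p^{(k_p)}$ of $H_p$ (up to relabelling/further conjugation, which permutes factors by Lemma \ref{0.2}(ii)), and this is nonzero by Lemma \ref{0.4}. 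Hence $I_{p,r}^{\Box k_p} \neq \{0\}$.

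For the final sentence, I would check that all the elements constructed for the lower bound in fact lie in $\Jsn(G) \cap I_{p,r}$: indeed $\Psi_j = \delta_{t_j} \Box \Gamma_r^{t_j}$ with $\Gamma_r^{t_j} \in \D(L_p^{(j)})$ and $L_p^{(j)} \leq H_p$; since $H_p \lhd G$ is normal and $L_p^{(j)}$ is subnormal in $H_p$ (being a direct factor), $L_p^{(j)} \approxlhd G$, so $\Psi_j \in \Jsn(G)$; and $\Psi_j \in I_{p,r}$ as noted. The upper bound $(\Jsn(G) \cap I_{p,r})^{\Box(k_p+1)} = \{0\}$ is immediate from $I_{p,r}^{\Box(k_p+1)} = \{0\}$. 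The main obstacle I anticipate is the bookkeeping in the lower-bound argument: one must carefully track how the successive conjugations $t_1, t_1 t_2, \ldots$ act on the factor indices — via Lemma \ref{0.2}(ii) each such conjugation sends a functional supported on $L_p^{(a)}$ to one supported on $L_p^{(\sigma(a))}$ for the corresponding permutation $\sigma$ — and verify that, after all the dust settles, the $k_p$ invariant-difference factors sit on $k_p$ \emph{pairwise distinct} factors $L_p^{(\cdot)}$, which is exactly what Lemma \ref{0.4} requires; choosing the $t_j$ (and possibly relabelling) so that this holds is the crux.
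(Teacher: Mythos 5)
Your upper bound is fine and is essentially the paper's own argument: reduce elements of $I_{p,r}$ to finite sums of terms $\delta_t \Box \Psi$ with $\Psi \in U_{p,r}$ (using Lemma \ref{5.1}(i), the left coset decomposition with respect to $H_p$, and the fact that $U_{p,r}$ is a left ideal of $\ell^1(H_p)^{**}$), push the point masses through, and kill the resulting length-$(k_p+1)$ products of conjugates with Lemma \ref{0.3}. The genuine gap is in the lower bound. Your witnesses $\Psi_j := \delta_{t_j} \Box \Gamma_r^{t_j}$ do \emph{not} lie in $I_{p,r}$ when $j \neq r$: since $\Gamma_r^{t_j} \in U_{p,j}$, Lemma \ref{5.1}(i) places $\delta_{t_j} \Box \Gamma_r^{t_j}$ in $I_{p,j}$, the left ideal generated by $U_{p,j}$, not in $I_{p,r}$. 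Concretely, writing $t_j = uh$ with $u$ in a left transversal for $H_p$ and $h \in H_p$, the left coset factor of $\Psi_j$ is $\delta_h \Box \Gamma_r^{t_j}$, which is supported on $hL_p^{(j)}$ and is $L_p^{(j)}$-left-invariant but not $L_p^{(r)}$-left-invariant (a nontrivial $s \in L_p^{(r)}$ translates its support to a different coset of $L_p^{(j)}$), so the defining conditions of $I_{p,r}$ fail. Hence your product, even if nonzero, would not witness $(\Jsn(G) \cap I_{p,r})^{\Box k_p} \neq \{0\}$; and in addition you explicitly leave unresolved the bookkeeping needed to get the $k_p$ invariant differences onto pairwise distinct factors, which you yourself call the crux.

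The paper resolves both difficulties with one device: fix a single $\Gamma \in \D(L_p^{(r)}) \setminus \{0\} \subset U_{p,r}$ and use the elements $\delta_{s_i} \Box \Gamma$, which are visibly in $I_{p,r} \cap \Jsn(G)$, with the $s_i$ chosen \emph{incrementally}: by transitivity pick $x_i$ with $x_i L_p^{(r)} x_i^{-1} = L_p^{(r+i)}$ (indices mod $k_p$) and set $s_1 = x_1$, $s_i = x_{i-1}^{-1} x_i$, so that $s_1 \cdots s_i = x_i$. Pushing the point masses to the right telescopes the product to $(\Gamma^{x_1} \Box \cdots \Box \Gamma^{x_{k_p}}) \Box \delta_{x_{k_p}}$, whose middle factor is a product of nonzero functionals supported on the $k_p$ pairwise distinct factors (Lemma \ref{0.2}(ii)), hence nonzero by Lemma \ref{0.4}. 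So the essential point your proposal is missing is that each factor of the test product must end in an element of $U_{p,r}$ itself, with the \emph{cumulative} conjugating elements (not the individual ones) enumerating the factors. A minor further remark: your justification that $L_p^{(r)}$ is infinite appeals to $\bigcap_i H_i = 1$, which is not assumed for a partial branch structure; what is actually needed (and implicit in the paper) is just that $G$ is infinite, so that the finite-index subgroup $H_p$, a product of copies of $L_p$, forces $L_p^{(r)}$ to be infinite.
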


\begin{proof}
	To ease notation fix $p \in \N$, and write $k= k_p$ and $n= [G: H_p]$. 	We shall first show that $I_{p,r}^{\Box (k+1)} = \{ 0 \}$. Let $t_1, t_2, \ldots, t_n$ be a right transversal for $H_p$ in $G$. Let $\mu_1, \ldots, \mu_{k+1} \in U_{p,r}$, and let $\Phi_1, \ldots, \Phi_{k+1} \in \ell^1(G)^{**}$. We must show that 
	$$\Phi_1 \Box \mu_1 \Box \cdots \Box \Phi_{k+1} \Box \mu_{k+1} = 0.$$
	
	Let the right coset decomposition of $\Phi_i$ be
	$$\Phi_i = \Phi_{i, 1} \Box \delta_{t_1} + \cdots + \Phi_{i, n} \Box \delta_{t_n},$$
	Then for each $i = 1, \ldots, k+1$, we have
	\begin{align}		\label{eq2}
	\Phi_i \Box \mu_i &= \Phi_{i, 1} \Box \delta_{t_1} \Box \mu_i + \cdots + \Phi_{i, n} \Box \delta_{t_n} \Box \mu_i \\
	&= \Phi_{i, 1} \Box \mu_i^{t_1} \Box \delta_{t_1} + \cdots + \Phi_{i, n} \Box \mu_i^{t_n} \Box \delta_{t_n}. \notag
	\end{align}
	By Lemma \ref{0.2}(i) each element $\mu_i^{t_j}$ belongs to $U_{p,q}$ for some $q \in \{ 1, \ldots, k \}$. Since
	these sets are ideals in $\ell^1(H_p)^{**}$ it follows from \eqref{eq2} that each $\Phi_i \Box \mu_i$ has the form
	\begin{equation}		\label{eq1}
	\Phi_i \Box \mu_i = \Psi_{i, 1} \Box \delta_{t_1} + \cdots + \Psi_{i, n} \Box \delta_{t_n}
	\end{equation}
	where each $\Psi_{i,j}$ belongs to a set of the form $U_{p,q}$ for some $q \in \{1, \dots, k\}$. The product 
	$\Phi_1 \Box \mu_1 \Box \cdots \Box \Phi_{k+1} \Box \mu_{k+1}$ can then be multiplied out using \eqref{eq1}, and the result is a sum of products of the form considered in Lemma \ref{0.3} (ii), each of which is zero by that lemma. Hence $\Phi_1 \Box \mu_1 \Box \cdots \Box \Phi_{k+1} \Box \mu_{k+1} = 0$, as required.
	
	To complete the proof it suffices to show that $(\Jsn(G) \cap I_{p,r})^{\Box k} \neq \{ 0 \}$. Let $\Gamma \in \D(L_p^{(r)}) \setminus \{ 0 \}$, which belongs to $U_{p,q}$ by Lemma \ref{5.1}(ii). Take $x_1, \ldots, x_k \in G$ such that $x_i L^{(r)}_p x_i^{-1} = L^{(r+i)}_p$ for $i=1, \ldots, k$ (where the superscript is understood modulo $k$). Define a sequence of group elements 
	$s_1, \ldots, s_k \in G$ by $s_1 = x_1$  and $s_i = x_{i-1}^{-1}x_i$ for $i=2, \ldots, k$, 
	so that, for each $i \in  \{1, \ldots, k\},$ we have $s_1 \cdots s_i = x_i$. For all $i$ we have $\delta_{s_i} \Box \Gamma \in I_{p,r} \cap \Jsn(G)$ and 
	$$(\delta_{s_1}\Box \Gamma) \Box \cdots \Box (\delta_{s_k} \Box \Gamma)
	=(\Gamma^{s_1} \Box \Gamma^{s_1 s_2} \Box \cdots \Box \Gamma^{s_1 \cdots s_k}) \Box \delta_{s_1 \cdots s_k}
	= (\Gamma^{x_1} \Box \Gamma^{x_2} \Box \cdots \Box \Gamma^{x_k}) \Box \delta_{x_k},$$
	which is non-zero since  $\Gamma^{x_1} \Box \Gamma^{x_2} \Box \cdots \Box \Gamma^{x_k} \neq 0$ by Lemma \ref{0.2}(ii) and Lemma \ref{0.4}, and $\delta_{x_k}$ is invertible.
\end{proof}

We can now prove Theorem \ref{00.2}.

\begin{proof}[Proof of Theorem \ref{00.2}.]
	Let $(H_i, L_i, k_i)_{i \in \N}$ be a branch structure for $G$, and let $p \in \N$. Since $k_p+1$ tends to infinity as $p \rightarrow \infty$, the previous proposition immediately implies that $\ell^1(G)^{**}$ and $\Jsn(G)$ contain nilpotent left ideals of arbitrarily large index. Recall that nilpotent left ideals of an algebra are contained in its Jacbson radical. A theorem of Grabiner \cite{G69} then implies that if every element of $\rad \ell^1(G)^{**}$ were nilpotent then $\rad \ell^1(G)^{**}$ itself would be nilpotent, and hence we conclude that $\rad \ell^1(G)^{**}$ must contain non-nilpotent elements.
\end{proof}

Theorem \ref{00.2} still leaves open the possibility that every element of $\rad \ell^1(G)^{**}$ is either nilpotent of some bounded index, or else is not nilpotent. However this cannot happen by the next proposition. 

\begin{proposition}		\label{5.2}
	Let $G$ be an amenable branch group. Then $\Asn(G)$, $\Jsn(G)$, and $\rad \ell^1(G)^{**}$ contain nilpotent elements of arbitrarily large index.
\end{proposition}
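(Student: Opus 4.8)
The plan is to produce, for each $N\in\N$, a single element $\Psi$ lying simultaneously in $\Asn(G)$, $\Jsn(G)$, and $\rad\ell^1(G)^{**}$ whose index of nilpotence exceeds $N$. Since $k_p\to\infty$ along any branch structure $(H_i,L_i,k_i)_{i\in\N}$ of $G$ (as $m_i\geq 2$), it suffices to construct, for each $p$, such a $\Psi=\Psi_p$ of index exactly $k_p+1$.

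Fix $p$ and write $k=k_p$, so $H_p=L_p^{(1)}\times\cdots\times L_p^{(k)}$. As in the proof of Proposition \ref{5.3}, $L_p^{(1)}$ is infinite and amenable, so we may fix $\Gamma\in\D(L_p^{(1)})\setminus\{0\}$; using the transitivity hypothesis (5) of Definition \ref{00.3}, choose $x_1=e,x_2,\ldots,x_k\in G$ with $x_jL_p^{(1)}x_j^{-1}=L_p^{(j)}$, and set $\Gamma_j:=\Gamma^{x_j}=\delta_{x_j}\Box\Gamma\Box\delta_{x_j^{-1}}$. By Lemma \ref{2.2} each $\Gamma_j$ is a non-zero invariant difference on the subnormal subgroup $L_p^{(j)}\lhd H_p\lhd G$, so $\Gamma_j\in\D(L_p^{(j)})\subseteq U_{p,j}$ by Lemma \ref{5.1}(ii), and moreover $\Gamma_j\in\Asn(G)\cap\Jsn(G)$ (taking $n=1$ in the definition of $\Asn(G)$, and writing $\Gamma_j=\delta_e\Box\Gamma_j$). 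The candidate is $\Psi:=\Gamma_1+\cdots+\Gamma_k$. Its membership in the radical is immediate: $\Gamma=\Gamma_1\in U_{p,1}\subseteq I_{p,1}$, which is a nilpotent left ideal by Proposition \ref{5.3} and hence contained in $\rad\ell^1(G)^{**}$; as the radical is a two-sided ideal and each $\delta_{x_j}$ is invertible in $\ell^1(G)^{**}$, we get $\Gamma_j=\delta_{x_j}\Box\Gamma\Box\delta_{x_j^{-1}}\in\rad\ell^1(G)^{**}$, whence $\Psi\in\rad\ell^1(G)^{**}$.

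It remains to show $\Psi^{\Box(k+1)}=0$ and $\Psi^{\Box k}\neq 0$. Expanding $\Psi^{\Box m}$ yields a sum of products $\Gamma_{j_1}\Box\cdots\Box\Gamma_{j_m}$ over tuples $(j_1,\ldots,j_m)\in\{1,\ldots,k\}^m$, all factors being supported on $H_p$. If $j_a=j_b=:l$ with $a<b$, then $\Gamma_{j_a},\Gamma_{j_b}\in U_{p,l}$, and since $U_{p,l}$ is a square-zero \emph{left} ideal of $\ell^1(H_p)^{**}$, the block $\Gamma_{j_{a+1}}\Box\cdots\Box\Gamma_{j_b}=(\Gamma_{j_{a+1}}\Box\cdots\Box\Gamma_{j_{b-1}})\Box\Gamma_{j_b}$ lies in $U_{p,l}$, so $\Gamma_{j_a}\Box(\Gamma_{j_{a+1}}\Box\cdots\Box\Gamma_{j_b})\in U_{p,l}^{\Box 2}=\{0\}$ and the whole product vanishes; this is exactly the mechanism of Lemma \ref{0.3}(i). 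Consequently, for $m=k+1$ every tuple has a repeated entry, so $\Psi^{\Box(k+1)}=0$, while for $m=k$ only the $k!$ tuples that are permutations of $(1,\ldots,k)$ survive, giving $\Psi^{\Box k}=\sum_{\sigma\in S_k}\Gamma_{\sigma(1)}\Box\cdots\Box\Gamma_{\sigma(k)}$.

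The one step demanding genuine care is showing this last sum is non-zero, and this is where Lemma \ref{0.4} enters. For each $j$ fix $y_j\in\ell^\infty(L_p^{(j)})$ with $\langle\Gamma_j,y_j\rangle\neq 0$, and let $y\in\ell^\infty(G)$ vanish off $H_p$ with $y((u_1,\ldots,u_k))=y_1(u_1)\cdots y_k(u_k)$ for $u_i\in L_p^{(i)}$. The crucial observation is that this $y$ is symmetric under reordering the direct factors of $H_p$, so for any $\sigma\in S_k$ one may apply Lemma \ref{0.4} to the reordered decomposition $H_p=L_p^{(\sigma(1))}\times\cdots\times L_p^{(\sigma(k))}$ and obtain $\langle\Gamma_{\sigma(1)}\Box\cdots\Box\Gamma_{\sigma(k)},y\rangle=\prod_{j=1}^k\langle\Gamma_j,y_j\rangle=:c\neq 0$. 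Hence $\langle\Psi^{\Box k},y\rangle=k!\,c\neq 0$ — here it matters that $\C$ has characteristic $0$ — so $\Psi^{\Box k}\neq 0$ and $\Psi$ has index of nilpotence exactly $k+1=k_p+1$. Letting $p\to\infty$ finishes the proof, the same elements $\Psi_p$ witnessing the statement for all three algebras. The main obstacle is thus precisely the non-cancellation of the $k!$ surviving permutation terms; everything else runs on the apparatus already assembled for Proposition \ref{5.3}.
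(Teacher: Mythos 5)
Your proposal is correct and takes essentially the same approach as the paper: the same candidate element (a sum of non-zero invariant differences on the factors $L_p^{(1)},\ldots,L_p^{(k_p)}$ of $H_p$), radical membership via the nilpotent ideals of Proposition \ref{5.3}, and vanishing of the $(k_p+1)$-st power via the repeated-index mechanism of Lemma \ref{0.3}(i). The only divergence is the final non-vanishing step, where the paper asserts that the $\Gamma_i$ commute and collapses $\Phi^{\Box k_p}$ to $k_p!\,\Gamma_1\Box\cdots\Box\Gamma_{k_p}$ before citing Lemma \ref{0.4}, whereas you pair each permuted product against the symmetric test functional from the proof of Lemma \ref{0.4}; this yields the same conclusion and neatly sidesteps having to justify the commutation.
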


\begin{proof}
	Let $(H_i, L_i, k_i)_{i \in \N}$ be a branch structure for $G$, and fix $p \in \N$. Let $\Gamma_i \in {\D(L_p^{(i)}) \setminus \{ 0 \}}$ for ${i = 1, \ldots, k_p}$, and let $\Phi = \Gamma_1 + \cdots + \Gamma_{k_p}$. Clearly $\Phi \in \Asn(G) \subset \Jsn(G)$, and it belongs to $\rad \ell^1(G)^{**}$ since each $\Gamma_i \in I_{p,i} \ (i = 1, \ldots, k_p)$, which is a radical left ideal by Proposition \ref{5.3}. We shall show that $\Phi$ is nilpotent of index $k_p + 1$. Indeed 
	$$\Phi^{\Box (k_p+1)} = \sum_{i_1} \cdots \sum_{i_{k_p+1}} \Gamma_{i_1} \Box \cdots \Box \Gamma_{i_{k_p+1}},$$
	which is zero by Lemma \ref{0.3}(i), since each $\Gamma_i \in U_{p,i}$. Similarly, any product of the form $\Gamma_{i_1} \Box \cdots \Box \Gamma_{i_{k_p}}$ will be zero, except when $i_1, \ldots, i_{k_p}$ are all distinct and hence equal to $1, \ldots, k_p$ in some order. The $\Gamma_i$ commute with each other since their supports do, and hence 
	$$\Phi^{\Box k_p} = k_p! \Gamma_1 \Box \cdots \Box \Gamma_{k_p},$$
	which is non-zero by Lemma \ref{0.4}. Since $k_p \rightarrow \infty$ as $p \rightarrow \infty$ this completes the proof. 
\end{proof}

Finally, we prove Theorem \ref{00.5}.

\begin{proof}[Proof of Theorem \ref{00.5}]
	Let $G$ be a just infinite group for which 
	$(\rad \ell^1(G)^{**})^{\Box 2} = \{ 0 \}$. We shall consider each of the three possibilities arising in Theorem \ref{00.4}. The group $G$ cannot be a branch group by Theorem \ref{00.2}. Suppose that $G$ is virtually abelian. Then by Theorem \ref{3.5b} $\Jsn(G)$ is a radical left ideal, and is nilpotent of index $\rank G +1$, whence $\rank G = 1$. Therefore $G$ has a finite index copy of $\Z$, which is hereditarily just infinite, and hence $G$ is hereditarily just infinite by Lemma \ref{5.4}.
	
	Finally, suppose that $G$ satisfies condition (3) of Theorem \ref{00.4}, and let $H= H_1 \times \cdots \times H_n$ be as written there. If $n \geq 2$ then $(H, H_1, n)$ is a partial branch structure of length 1, so that, by Proposition \ref{5.3}, $\ell^1(G)^{**}$ contains a radical left ideal which is nilpotent of index $n+1 \geq 3$, contradicting our assumption on $\rad \ell^1(G)^{**}$. Therefore $n=1$, and $G$ contains a finite index hereditarily just infinite subgroup, and hence is hereditarily just infinite itself by Lemma \ref{5.4}.  
\end{proof}

\subsection*{Acknowledgements}
\noindent
Part of this work was undertaken during my postdoctoral position at the Laboratoire des Math{\' e}matiques de Besan{\c c}on, and I would like to thank Uwe Franz and Yulia Kuznetsova for their kind hospitality during that period. I am extremely grateful to Bence Horv{\' a}th for all of his encouragement and insight, and for taking time out of his holiday to read a draft of this manuscript. I would also like to thank Bruno Duchesne for a helpful email exchange. Finally, I would like to thank the referee for their very helpful comments.

\end{document}